\newcommand{\email}[1]{\hspace*{\stretch{1}}\emph{\texttt{#1}}}
\def\blfootnote{\xdef\@thefnmark{$\star$}\@footnotetext}
\newenvironment{Authors}%
  {\begin{center}\begin{bfseries}}%
  {\end{bfseries}\end{center}}
\newenvironment{Addresses}%
  {\begin{flushleft}\begin{itshape}}%
  {\end{itshape}\end{flushleft}}
\DeclareRobustCommand{\tikzcaption}[1]{\tikzset{external/export next=false}#1}
\DeclareRobustCommand{\tikzref}[1]{\tikzcaption{\resizebox{!}{\refsize}{\ref{#1}}}}
\newtheorem{theorem}{Theorem}[section]
\newtheorem{theorem1}{Theorem}[section]
\newtheorem{theorem2}{Theorem}[section]
\newtheorem{proposition}[theorem]{Proposition}
\newtheorem{lemma}[theorem1]{Lemma}
\newtheorem{remark}[theorem2]{Remark}
  \newcommand{\vertiii}[1]{{\left\vert\kern-0.25ex\left\vert\kern-0.25ex\left\vert #1 
    \right\vert\kern-0.25ex\right\vert\kern-0.25ex\right\vert}}
\begin{document}

%\begin{frontmatter}
\thispagestyle{plain}

\title{ Space-time registration-based model reduction of parameterized one-dimensional hyperbolic PDEs}
 \date{}
 
 \maketitle
\vspace{-50pt} 
 
\begin{Authors}
Tommaso Taddei$^{1}$, Lei Zhang$^{1}$
\end{Authors}

\begin{Addresses}
$^1$
IMB, UMR 5251, Univ. Bordeaux;  33400, Talence, France.
Inria Bordeaux Sud-Ouest, Team MEMPHIS;  33400, Talence, France, \email{tommaso.taddei@inria.fr,lei.a.zhang@inria.fr} \\
\end{Addresses}

\begin{abstract}
We propose a model reduction procedure for rapid and reliable solution of parameterized hyperbolic partial differential equations. Due to the presence of parameter-dependent shock waves and contact discontinuities, these problems are extremely challenging for traditional model reduction approaches based on linear approximation spaces. 
The main ingredients of the proposed approach are (i) an adaptive space-time registration-based data compression procedure to align local features in a fixed reference domain, (ii) a space-time Petrov-Galerkin (minimum residual) formulation for the computation of the mapped solution, and (iii) a hyper-reduction procedure to speed up online computations. 
We present numerical results for 
a Burgers model problem and a 
shallow water model problem, to empirically demonstrate the potential of the method. 
\end{abstract}

% REQUIRED
\emph{Keywords:} 
parameterized  hyperbolic partial differential equations; model order reduction; data compression.
 
 \section{Introduction}
\label{sec:intro}
Several studies have demonstrated the 
inaccuracy of 
linear approximation spaces to deal with parameter-dependent hyperbolic partial differential equations (PDEs) with parameter-dependent shocks:
this challenge hinders the application of 
parameterized model order reduction (pMOR) techniques to this class of problems. 
To  address the slow decay of the Kolmogorov $N$-width 
of the solution manifold 
associated with the  problem of interest \cite{ohlberger2015reduced},
several authors have proposed to resort to nonlinear approximations.  
The goal of this paper is to develop 
a Lagrangian nonlinear compression method, and associated reduced-order model (ROM) for  
one-dimensional (systems of) conservation laws: the key element of the approach is  
 a space-time registration procedure to improve the linear reducibility of the solution manifold.
 In computer vision and pattern recognition, registration refers to the process of finding a  transformation that aligns two datasets; in this paper, registration refers to the process of finding a  parametric spatio-temporal  transformation that improves the linear compressibility of  the solution manifold.

We denote by $\mu$ the vector of model parameters in the parameter region $\mathcal{P} \subset \mathbb{R}^P$, we denote by $\Omega \subset \mathbb{R}^2$ the spatio-temporal domain over which the PDE is defined,  and we define the Hilbert space 
$\mathcal{X} = [L^2(\Omega)]^D$, 
where $D\geq 1$ denotes the number of state variables, and 
the Banach space
 ${\rm Lip}(\Omega)$  of Lipschitz functions over $\Omega$.
Then, we introduce the solution $U_{\mu}$  to the PDE for a given  $\mu$, 
$U_{\mu}: \Omega \to \mathbb{R}^D$,   and   the solution manifold
$\mathcal{M} := \{ U_{\mu} : \mu \in \mathcal{P} \} \subset \mathcal{X}$.
\emph{Linear compression} methods rely on approximations of the form 
$U_{\mu} \approx \widehat{U}_{\mu} = Z_N \widehat{\boldsymbol{\alpha}}_{\mu}$, where 
$Z_N: \mathbb{R}^N \to \mathcal{X}$ is a linear parameter-independent operator and $\widehat{\boldsymbol{\alpha}}: \mathcal{P} \to \mathbb{R}^N$ is a function of the  parameters.
On the other hand, we might distinguish between
\emph{Eulerian} and \emph{Lagrangian} 
\emph{nonlinear compression} methods.
We do not provide here a comprehensive survey of 
nonlinear compression methods, but rather cite a few representative approaches.
\begin{itemize}
\item
Eulerian approaches  \cite{lee2020model,reiss2018shifted,rim2019manifold,welper2017interpolation} consider approximations of the form $\widehat{U}_{\mu}:= Z_{N,\mu} (\widehat{\boldsymbol{\alpha}}_{\mu} )$, where
$Z_{N}: \mathbb{R}^N  \times \mathcal{P} \to \mathcal{X}$ is a suitably-chosen operator which might depend on the parameter $\mu$ and might also be nonlinear in  the first argument.
\item
Lagrangian approaches 
\cite{iollo2014advection,ohlberger2013nonlinear,taddei2015reduced,taddei2020registration}
rely on linear compression methods to approximate the mapped solution  $\widetilde{U}_{\mu}:= U_{\mu} \circ \boldsymbol{\Phi}_{\mu}$, 
where $\boldsymbol{\Phi} : \Omega \times \mathcal{P} \to \Omega$ is a suitably-chosen bijection from $\Omega$ into itself: the mapping $\boldsymbol{\Phi}$ should be chosen to make the mapped solution manifold $\widetilde{\mathcal{M}} = \{ \widetilde{U}_{\mu} : \mu \in \mathcal{P} \}$ more amenable for linear approximations.
\end{itemize}
Note that any  Lagrangian method is equivalent to an Eulerian method with 
$Z_{N,\mu} (\boldsymbol{\alpha}) := 
(\widetilde{Z}_N  \boldsymbol{\alpha})\; 
\circ \boldsymbol{\Phi}_{\mu}^{-1} 
$ for some linear operator 
$\widetilde{Z}_N: \mathbb{R}^N \to \mathcal{X}$, while the converse is not true.

{
%\color{red}
Given a low-dimensional representation of the solution field $U_{\mu}$ for all $\mu \in \mathcal{P}$,  we might distinguish between \emph{non-intrusive} (data-fitted) and \emph{intrusive} (projection-based) approaches for the  prediction of the reduced coefficients for out-of-sample parameter values: the former rely on multi-target regression algorithms; the latter rely on Galerkin/Petrov-Galerkin projection to devise a ROM for  online predictions. To guarantee fast online evaluations of projection-based ROMs,  hyper-reduction techniques need to be applied: these techniques are designed to reduce assembling costs associated with residual evaluations. In the framework of linear compression methods, we refer to \cite{chakir2009methode,gallinari2018reduced,guo2018reduced} for representative examples of non-intrusive techniques, and to the reduced basis literature 
(e.g., \cite{hesthaven2016certified,quarteroni2015reduced}) for a thorough discussion about hyper-reduced projection schemes. 
Non-intrusive techniques can be trivially extended to nonlinear compression methods; on the other hand, the extension of projection-based schemes  is more challenging: 
for Lagrangian approaches, following
\cite{ohlberger2013nonlinear,taddei2020registration},
 we might perform projection and  hyper-reduction  in the mapped configuration;
 for Eulerian approaches, specialized techniques need to be proposed to ensure rapid ROM evaluations 
(see \cite{lee2020model,rim2019manifold}).}

%
%On the other hand,  as discussed below, Lagrangian methods naturally fit within the standard framework of projection-based pMOR:   this is in contrast with Eulerian approaches, which might require more involved strategies for the online computation of the solution coefficients $\widehat{\boldsymbol{\alpha}}_{\mu}$
%(see \cite{lee2020model}).

In this paper, we  present a Lagrangian projection-based pMOR technique for conservation laws. Given $\mu \in \mathcal{P}$, 
we shall consider approximations of the form:
\begin{equation}
\label{eq:mapping_representation}
U_{\mu} \approx
\widehat{U}_{\mu} \circ  \boldsymbol{\Phi}_{\mu}^{-1},
\quad
{\rm with} \;
\widehat{U}_{\mu} = Z_N \widehat{\boldsymbol{\alpha}}_{\mu},
\quad
\boldsymbol{\Phi}_{\mu} =\texttt{id} +   W_M \widehat{\mathbf{a}}_{\mu}.
\end{equation}
Here, 
$\texttt{id} (\mathbf{x}) \equiv \mathbf{x}$ is the identity map,
$Z_N: \mathbb{R}^N \to \mathcal{X}$ and
$W_M: \mathbb{R}^M \to  [ {\rm Lip}(\Omega) ]^2$ are suitable linear operators, and 
$\widehat{\boldsymbol{\alpha}}: \mathcal{P} \to \mathbb{R}^N$  and
$\widehat{\mathbf{a}}: \mathcal{P} \to \mathbb{R}^M$ 
are  functions of the parameter $\mu$.
The key features of the present work are 
(i) a Lagrangian data compression technique for the construction of a low-dimensional representation of the solution field of the form \eqref{eq:mapping_representation},  
(ii)  a kernel-based regression algorithm for the online computation of the mapping coefficients $\widehat{\mathbf{a}}_{\mu}$,
and
(iii) a space-time  hyper-reduced Petrov-Galerkin (minimum residual) 
ROM for the online computation of 
the solution coefficients $\widehat{\boldsymbol{\alpha}}_{\mu}$.

Given the space-time snapshots  
$\{  U^k = U_{\mu^k}\}_{k=1}^{n_{\rm train}} \subset \mathcal{M}$, our data compression procedure returns 
(i) the linear operators $Z_N: \mathbb{R}^N \to \mathcal{X}$ and
$W_M: \mathbb{R}^M \to  [ {\rm Lip}(\Omega) ]^2$ in
\eqref{eq:mapping_representation}, and 
(ii) the coefficients $\{  \boldsymbol{\alpha}^k \}_{k=1}^{n_{\rm train}} \subset \mathbb{R}^N$ and  $\{  \mathbf{a}^k \}_{k=1}^{n_{\rm train}} \subset \mathbb{R}^M$ such that
$U^k \approx
\widehat{U}^k \circ (\boldsymbol{\Phi}^k)^{-1}$ where
$\widehat{U}^k = Z_N \boldsymbol{\alpha}^k$ and
$\boldsymbol{\Phi}^k =\texttt{id} +   W_M \mathbf{a}^k$.  
We develop an adaptive registration algorithm --- which is an extension of the approach in \cite{taddei2020registration} --- to construct the mappings  
 $\{ \boldsymbol{\Phi}^k \}_k$;
 on the other hand, 
we resort to 
proper orthogonal decomposition (POD, \cite{berkooz1993proper,volkwein2011model}) 
to generate the low-dimensional linear approximation operators  $Z_N, W_M$. 
Since the procedure can be viewed as a generalization of POD, we here refer to our approach as to 
\texttt{RePOD} (Registered POD): as rigorously showed below, our approach is \emph{general}, that is it does not depend on the underlying mathematical model.

The registration approach in \cite{taddei2020registration} relies on 
(i) a nonlinear non-convex optimization statement that aims at reducing the difference between a properly-chosen template $\bar{U} = U_{\bar{\mu}}$ and the mapped field $\widetilde{U}_{\mu} =U_{\mu} \circ \boldsymbol{\Phi}_{\mu}$ for $\mu \in \{ \mu^k \}_{k=1}^{n_{\rm train}}$, and on 
(ii) a generalization procedure based on kernel regression to extend the mapping to the whole parameter domain. In this work, we modify the optimization statement to penalize the distance from a low-dimensional space --- here referred to as \emph{template space} --- and we propose a greedy procedure to adaptively build the template space. We remark that several authors have proposed template-fitting strategies to deal with transport 
\cite{mendible2019dimensionality,mowlavi2018model,rowley2000reconstruction}:
here, by enforcing  the bijectivity of  $\boldsymbol{\Phi}$   from $\Omega$ in itself for all $\mu \in \mathcal{P}$, we might consider a standard projection-based ROM in the mapped  configuration.

{%\color{red}
Registration exploits the presence of local features (e.g., shock waves) that are \emph{topologically equivalent} for all $\mu \in \mathcal{P}$. These structures are present in a variety of physically-relevant parameter-dependent problems, including the two model problems considered in this paper.
In this respect, the use of a space-time formulation is instrumental to capture the interaction between multiple shock waves: in Appendix \ref{sec:data_compression_vis}, we show that space-only registration is not appropriate  to deal with the interaction between two shock waves, for a Burgers model problem.
} We observe that, starting with the seminal work  \cite{urban2014improved} , space-time formulations have been extensively considered in the pMOR literature: we refer to \cite{brunken2019parametrized,glas2019reduced} for  applications to  hyperbolic PDEs. 
However,
while in \cite{brunken2019parametrized,glas2019reduced}, space-time formulations are motivated by 
their superior stability and variational construction, which facilitate the error analysis, here
the space-time setting  is motivated by approximation considerations.
In this respect, our work shares important features with  the recent space-time adaptive discontinuous Galerkin (DG) method proposed in 
\cite{zahr2018optimization,zahr2019implicit}.

Given a new value of the parameter $\mu \in \mathcal{P}$, we resort to kernel-based regression to estimate the mapping coefficients $\widehat{\mathbf{a}}_{\mu}$, while we resort to a space-time minimum residual projection-based ROM to compute the solution coefficients $\widehat{\boldsymbol{\alpha}}_{\mu}$ in \eqref{eq:mapping_representation}. To reduce the costs of the minimum residual ROM, we first introduce a $J$-dimensional empirical test space \cite{taddei2018offline} $\mathcal{Y}_J$ 
and then we resort to an empirical quadrature procedure 
(EQP, \cite{yano2019discontinuous}) to reduce the online assembling and memory costs. 
In Appendix \ref{sec:AMR_linear_theory}, we present mathematical justifications for linear problems of the procedure used to construct the test space $\mathcal{Y}_J$; we further present numerical results to illustrate the superiority of minimum residual ROMs compared to   Galerkin ROMs.

Several authors have considered minimum residual 
 ROMs  for structural and fluid mechanics applications,  \cite{carlberg2013gnat,maday2001blackbox,yano2014space}.
We observe that in  \cite{carlberg2013gnat} the authors resort to Gappy-POD 
\cite{bui2003proper,everson1995karhunen}
to provide hyper-reduction of projection-based ROMs; on the other hand, 
similarly to Grimberg et al. \cite{grimberg2020stability}, 
we here resort to an EQP (see \cite{farhat2015structure,patera2017lp}).  
EQPs recast the problem of hyper-reduction as a suitable sparse representation problem and then rely on approximate techniques originally developed in the signal processing and optimization literature to approximate the solution. 
Here, we   adapt the procedure first presented in
\cite{yano2019discontinuous} for Galerkin ROMs to minimum residual ROMs to derive the sparse representation problem of interest; then, 
as in \cite{farhat2015structure}, 
we resort to a nonnegative linear squares method
(cf. \cite{lawson1974solving})  to find an approximate solution.

The paper is organized as follows.
In section \ref{sec:formulation}, we introduce the space-time variational formulation in the mapped configuration, and we introduce the two model problems considered for numerical assessment;
in section \ref{sec:registration}, we 
present the data compression procedure
based on space-time registration;
in section \ref{sec:projection_based_ROM},
we introduce  the hyper-reduced Petrov-Galerkin ROM;
finally, in section \ref{sec:numerics},
we present several numerical results to demonstrate the effectiveness of the proposed approach.
Several appendices complete the paper.

\subsection*{Notation}

By way of preliminaries, we introduce notation used throughout the paper.
We denote by $x$ a generic element of the spatial interval $(0,L)$ with $L>0$, and by $t$ a time instant in $(0,T)$ with $T>0$. In view of the space-time formulation,  we introduce   the spatio-temporal domain  $\Omega = (0,L) \times (0,T)$ and the gradient 
 $\nabla := [\partial_x, \partial_t]^T$. We denote by $\mathbf{x}=(x,t)$  a generic element of $\Omega$, and by $\mathbf{n}$   the outward normal to $\partial \Omega$.
 
 Given the reference domain  $\widetilde{\Omega} \subset \mathbb{R}^2$,  we introduce the parameterized mapping  
$\boldsymbol{\Phi}:  \widetilde{\Omega} \times \mathcal{P} \to \Omega$; we denote by $\mathbf{X}$ a generic element of $\widetilde{\Omega}$ and we define the mapped gradient
$\widetilde{\nabla} = [\partial_{X_1}, \partial_{X_2}]^T$. 
We further define the Jacobian matrix
$\mathbf{G}_{\mu} := \widetilde{\nabla}  \boldsymbol{\Phi}_{\mu}$ and determinant 
$g_{\mu} := {\rm det} ( \widetilde{\nabla}  \boldsymbol{\Phi}_{\mu} )$, which is assumed to be strictly positive over $\widetilde{\Omega}$.
  In this work, we consider bijections from $\Omega$ into itself: for this reason, we replace 
$\widetilde{\Omega}$  with $\Omega$.

We define the Hilbert space $\mathcal{X} = [L^2(\Omega)]^D$ where $D$ is the number of state variables. We denote by
$(\cdot, \cdot)$ the $L^2(\Omega)$ inner product,
$(w,v) = \int_{\Omega} w \cdot v \, d \mathbf{x} $, and by
$\| \cdot \| = \sqrt{(\cdot, \cdot)}$ the corresponding induced norm. Given the linear space $\mathcal{W} \subset \mathcal{X}$, 
$\Pi_{\mathcal{W}}: \mathcal{X} \to \mathcal{W}$ denotes the projection operator onto $\mathcal{W}$. 
We  further  denote by $\mathbf{e}_1,\ldots,\mathbf{e}_N$ the canonical basis in $\mathbb{R}^N$ and by $\|  \cdot \|_2$ the Euclidean norm. 
Given the linear operator $Z_N: \mathbb{R}^N \to \mathcal{X}$, we define $\zeta_n=
Z_N \mathbf{e}_n$ for $n=1,\ldots,N$ and we define the space
$\mathcal{Z}_N : = {\rm span} \{ \zeta_n \}_{n=1}^N$; in the remainder, we shall assume that $\{ \zeta_n \}_{n=1}^N$ is an orthonormal basis of $\mathcal{Z}_N$.

We further introduce the relative best-fit error  $E^{\rm bf}$, which is used to assess the performance of data compression. Given the reduced space $\mathcal{Z}_N \subset \mathcal{X}$ and $\mu \in \mathcal{P}$, we define
\begin{subequations}
\label{eq:best_fit_error}
\begin{equation}
E^{\rm bf}(\mu, \mathcal{Z}_N) :=
\frac{1}{ \| U_{\mu}  \|}
\min_{\zeta \in \mathcal{Z}_N} \| U_{\mu} - \zeta \|.
\end{equation}
We further define the "registered" best-fit error as
\begin{equation}
E^{\rm bf}(\mu, \mathcal{Z}_N, \boldsymbol{\Phi}) :=
\frac{1}{ \| U_{\mu}  \|}
\min_{\zeta \in \mathcal{Z}_N} 
\| U_{\mu} - \zeta \circ \boldsymbol{\Phi}_{\mu}^{-1}  \|
\;  = \;  
\frac{1}{\|  U_{\mu}  \|  }
\;\;
\min_{\zeta \in \mathcal{Z}_N} 
\sqrt{ \int_{\Omega} \, 
\, \left( U_{\mu} \circ \boldsymbol{\Phi}_{\mu}  - \zeta  \right)^2 \,  g_{\mu}  \;   d \mathbf{X}  }.
\end{equation}
Note that the latter expression is convenient for finite element calculations --- since it avoids the computation of the inverse map $\boldsymbol{\Phi}_{\mu}^{-1}$ in all quadrature points
---
and is used in the numerical results.
\end{subequations}

We use the method of snapshots (cf. \cite{sirovich1987turbulence}) to compute POD eigenvalues and eigenvectors. Given the snapshot set $\{ U^k \}_{k=1}^{n_{\rm train}} \subset \mathcal{M}$ and the inner product $(\cdot, \cdot)_{\rm pod}$, we define the Gramian matrix
$\mathbf{C} \in \mathbb{R}^{n_{\rm train}, n_{\rm train}}$,
$\mathbf{C}_{k,k'}= (U^k, U^{k'})_{\rm pod}$, 
and we define the POD eigenpairs    
$\{(\lambda_n, \zeta_n)  \}_{n=1}^{n_{\rm train}}$
as
$$
\mathbf{C} \boldsymbol{\zeta}_n = \lambda_n  \, \boldsymbol{\zeta}_n,
\quad
\zeta_n:=   \sum_{k=1}^{n_{\rm train}} \, \left(   \boldsymbol{\zeta}_n  \right)_k \, U^k,
\quad
n=1,\ldots,n_{\rm train},
$$
with $\lambda_1 \geq \lambda_2 \geq \ldots \lambda_{n_{\rm train}} \geq 0$. In our implementation, we orthonormalize the modes, that is
$( \zeta_n, \zeta_n)_{\rm pod}= 1$ for $n=1,\ldots,n_{\rm train}$.
To stress dependence of the POD space on the choice of the inner product, we use notation $L^2$-POD if $(\cdot, \cdot )_{\rm pod} = (\cdot, \cdot)_{L^2(\Omega)}$,  and
 $\|  \cdot \|_2$-POD if $(\cdot, \cdot)_{\rm pod}$ is the Euclidean inner product. Finally, we shall choose the size $N$ of the POD space based on the criterion
\begin{equation}
\label{eq:POD_cardinality_selection}
N := \min \left\{
N': \, \sum_{n=1}^{N'} \lambda_n \geq  \left(1 - tol_{\rm pod} \right) 
\sum_{i=1}^{n_{\rm train}} \lambda_i
\right\},
\end{equation} 
where $tol_{\rm pod} > 0$ is a given tolerance.

\subsection*{High-fidelity discretization}
 
 Our reduced-order formulation relies on a 
high-fidelity (hf)  DG 
finite element (FE) discretization; we refer to the textbook  \cite{hesthaven2007nodal} for an introduction to DG methods for conservation laws. We denote by $\mathcal{T}_{\rm hf} = \{  \texttt{D}^k \}_{k=1}^{N_{\rm e}} $
a non-overlapping  triangulation of $\Omega$,
{%\color{red}
we denote by $\{  \mathbf{x}_{i,k}^{\rm hf}: \, i=1,\ldots,n_{\rm lp}, k=1,\ldots, N_{\rm e} \}$ the nodes of the mesh, where $n_{\rm lp}$ is the number of degrees of freedom in each element,
we denote by
$\boldsymbol{\Psi}_k: \widehat{\texttt{D}} \to \texttt{D}^k$ the FE mapping between the reference element 
$\widehat{\texttt{D}}$ and the $k$-th element of the mesh}, and  
 we denote by 
$\partial \mathcal{T}_{\rm hf}  = \{  \texttt{f}^i \}_{i=1}^{N_{\rm f}}$ the set of facets of the mesh.   We denote by $\mathbf{N}^+$ the positive normal to a given facet in $\partial \mathcal{T}_{\rm hf}$: $\mathbf{N}^+$  coincides with the outward normal on $\partial \Omega$ and is chosen arbitrarily  for  interior facets.  We further define  the negative normal $\mathbf{N}^- = - \mathbf{N}^+$. Then, we define the DG FE space of order $p$,
\begin{equation}
\label{eq:DG_space}
\mathcal{X}_{\rm hf} = \left\{
v \in [L^2(\Omega)]^D \; : \;
v    \circ \boldsymbol{\Psi}_k   \in [\mathbb{P}^p (\widehat{\texttt{D}})  ]^D,
\quad
k=1,\ldots, N_{\rm e} 
\right\},
\end{equation}
where $\mathbb{P}^p$ denotes the space of two-dimensional polynomials of total degree at most $p$.
Given $w \in \mathcal{X}_{\rm hf}$ and 
$\mathbf{X} \in \partial \mathcal{T}_{\rm hf}$,
 we define  $w^{\pm}(\mathbf{X}) = \lim_{\epsilon \to 0^+} w(\mathbf{X} - \epsilon \mathbf{N}^{\pm} )$. 

We denote by $\{  \varphi_{i,k}^d =
\varphi_{i,k} \mathbf{e}_d    \}_{i,k,d}$ the Lagrangian basis of the space $\mathcal{X}_{\rm hf}$, with
$i=1,\ldots, n_{\rm lp} = \frac{p(p+1)}{2}$,
$k=1,\ldots,N_{\rm e}$,
$d=1,\ldots,D$, and we define  $N_{\rm hf} = {\rm dim} (\mathcal{X}_{\rm hf}) = n_{\rm lp} N_{\rm e} D$;
to shorten notation, we might also use the linear indexing 
$\varphi_j :=  \varphi_{i,k}^d =
\varphi_{i,k} \mathbf{e}_d$ where 
$j=j_{i,k,d} = i + (k-1) n_{\rm lp}  + (d-1) n_{\rm lp} N_{\rm e}$.
Given $w \in \mathcal{X}_{\rm hf}$, we denote by 
$\mathbf{w} \in \mathbb{R}^{N_{\rm hf}}$ the corresponding  FE vector,
$w (\cdot)= \sum_j ( \mathbf{w}   )_j \varphi_j (\cdot)$. With some abuse of notation, given the functional $F \in \mathcal{X}_{\rm hf}'$, we denote by 
$\mathbf{F} \in \mathbb{R}^{N_{\rm hf}}$ the 
corresponding  FE vector such that
$( \mathbf{F} )_j = F(  \varphi_j  )$, for 
$j=1,\ldots, N_{\rm hf}$.

In view of the definition of the ROM, we introduce the discrete $L^2$ and $H^1$ norms
$\mathbf{X}_{\rm hf},\mathbf{Y}_{\rm hf} \in \mathbb{R}^{N_{\rm hf}, N_{\rm hf}}$ such that
\begin{equation}
\label{eq:DG_norms}
\left\{
\begin{array}{ll}
\displaystyle{
\left(
\mathbf{X}_{\rm hf}
\right)_{j,j'}
=
\sum_{k=1}^{N_{\rm e}} \; 
\int_{\texttt{D}^k } \; \varphi_{j'} \cdot \varphi_j \, d \mathbf{X},
}
&
\\[3mm]
\displaystyle{
\left(
\mathbf{Y}_{\rm hf}
\right)_{j,j'}
=
\sum_{k=1}^{N_{\rm e}} \; 
\int_{\texttt{D}^k } \; \varphi_{j'} \cdot \varphi_j 
+
\nabla \varphi_{j'} \cdot  \nabla \varphi_j 
\, d \mathbf{X},
-
\int_{\partial \texttt{D}^k } \;
\mathcal{H}^{\rm d}(    \varphi_{j'}, 
\varphi_j ;  \mathbbm{1}, \mathbf{N}) \;   d \mathbf{X}
}
&
\\
\end{array}
\right.
\end{equation}
where  
$\mathcal{H}^{\rm d}(  \cdot, \cdot ;  \mathbbm{1}, \mathbf{N})$ is 
a suitable diffusion flux, here associated with the diffusion matrix $\mathbf{K} = \mathbbm{1}$. In this work, 
we consider   the
 BR2  flux  introduced in \cite{bassi1997high} (see also 
\cite{bassi2005discontinuous}).
We refer to   \cite{arnold2002unified} for a detailed discussion concerning the analysis of DG formulations for second-order elliptic problems.
In the following, with some abuse of notation, we denote by 
$\mathcal{X}_{\rm hf}$ the linear space \eqref{eq:DG_space} equipped with the discrete $L^2$ norm
$\| v  \| := \sqrt{ \mathbf{v}^T \mathbf{X}_{\rm hf} \mathbf{v} }$, and
we denote by 
$\mathcal{Y}_{\rm hf}$ the linear space \eqref{eq:DG_space} equipped with the discrete $H^1$ norm
$\vertiii{v} := \sqrt{ \mathbf{v}^T \mathbf{Y}_{\rm hf} \mathbf{v} }$: note that 
$\mathcal{X}_{\rm hf}$  and $\mathcal{Y}_{\rm hf}$  coincide as linear spaces but are different  Hilbert spaces.
 We denote by $\texttt{R}_{\mathcal{Y}_{\rm hf}}: \mathcal{Y}_{\rm hf}' \to \mathcal{Y}_{\rm hf}$ the Riesz operator in $\mathcal{Y}_{\rm hf}$.

{%\color{red}
In view of registration, given the mesh $\mathcal{T}_{\rm hf}$, we define the mapped mesh $\boldsymbol{\Phi}(\mathcal{T}_{\rm hf})$ associated with the mapping 
 $\boldsymbol{\Phi}: \Omega \to \Omega$ such that  $\boldsymbol{\Phi}(\mathcal{T}_{\rm hf})$ shares with $\mathcal{T}_{\rm hf}$ the same connectivity matrix and the nodes are given by 
$\{ \boldsymbol{\Phi} (  \mathbf{x}_{i,k}^{\rm hf} ) : \, i=1,\ldots,n_{\rm lp}, k=1,\ldots, N_{\rm e} \}$.
Furthermore, we define the FE space of order $p$ such that  
\begin{equation}
\label{eq:mappedFE_space}
\mathcal{X}_{\rm hf, \Phi} = 
 \left\{
v \in [L^2(\Omega)]^D \; : \;
v    \circ \boldsymbol{\Psi}_{k,\Phi}   \in [\mathbb{P}^p (\widehat{\texttt{D}})  ]^D,
\quad
k=1,\ldots, N_{\rm e} 
\right\},
\end{equation}
where
$\boldsymbol{\Psi}_{k,\Phi}$ is the FE mapping between the reference element $\widehat{\texttt{D}}$ and the $k$-th element of $\boldsymbol{\Phi}(\mathcal{T}_{\rm hf})$. Given $w \in \mathcal{X}_{\rm hf}$,  we can define the FE field 
$w_{\Phi}$ in $\mathcal{X}_{\rm hf, \Phi}$ such that
$w_{\Phi}( \boldsymbol{\Phi} (  \mathbf{x}_{i,k}^{\rm hf} ) ) = w(   \mathbf{x}_{i,k}^{\rm hf} )$: note that 
$w_{\Phi}$ and $w$ share the same FE vector and 
$w_{\Phi}$ is an  approximation of $ w\circ \boldsymbol{\Phi} ^{-1}$ ---
and $w_{\Phi} \equiv w\circ \boldsymbol{\Phi} ^{-1}$ if $\boldsymbol{\Phi}$ is piecewise linear.
}

\section{Formulation}
\label{sec:formulation}
\subsection{Space-time formulation of conservation laws}
\label{sec:space_time}

In this section, we omit dependence on the parameter $\mu$ for notational brevity.
We consider a general system of one-dimensional conservation laws:
\begin{equation}
\label{eq:1Dconslaws}
\left\{
\begin{array}{ll}
\partial_t U + \partial_x f(U) = S(U) & {\rm in} \; \Omega \\[3mm]
U(\cdot; 0) = U_{\rm D,0}(\cdot) & {\rm on} \; 
\Gamma_{\rm in,0} :=
 (0,L) \times \{ 0 \}  \\
\end{array}
\right.
\end{equation}
where $U:\Omega \to \mathbb{R}^D$ is the 
vector of conserved variables,
%solution to the system of conservation laws, 
$f: \mathbb{R}^D \to \mathbb{R}^D$ is the physical flux, and $S: \mathbb{R}^D \to \mathbb{R}^D$ is the source term.
The problem is completed with suitable boundary conditions, which depend on the number of incoming characteristics.
We provide two examples of problems of the form \eqref{eq:1Dconslaws} at the end of this section. We remark that the solution $U$ may contain discontinuities and might not be unique: we here seek $U$ satisfying \eqref{eq:1Dconslaws} away from discontinuities and satisfying suitable Rankine-Hugoniot and entropy conditions at discontinuities, \cite{leveque1992numerical}.

We recast \eqref{eq:1Dconslaws} as
\begin{equation}
\label{eq:steady_conslaw}
\nabla \cdot F(U) \, = \, S(U)  \quad {\rm in} \; \Omega
\end{equation}
where $F(U) = [f(U), U]$,
$F:\mathbb{R}^{D} \to \mathbb{R}^{D,2}$.
 The problem is completed with suitable boundary conditions on $\{0,L \} \times (0,T)$, which depend on the number of incoming characteristics. Note that 
at  $\Gamma_{\rm in,0}$ all $D$ characteristics are incoming and
at $(0,L) \times \{T  \}$  all $D$ characteristics are outward: 
 this implies that \eqref{eq:steady_conslaw} requires the prescription of the full initial datum at $t=0$ and does not require any condition at $t=T$, consistently with \eqref{eq:1Dconslaws}.
Note that, for a proper choice of $F$, \eqref{eq:steady_conslaw} encapsulates two-dimensional steady conservation laws and unsteady one-dimensional conservation laws.

We recast \eqref{eq:steady_conslaw} on a reference domain: this will lead to the variational formulation exploited in section \ref{sec:projection_based_ROM} for model reduction. 
Given   the mapping 
$\boldsymbol{\Phi}:  \Omega  \to \Omega$, 
recalling  standard change-of-variable formulas (cf. Appendix \ref{sec:change_variable}), we obtain that the mapped solution field $\widetilde{U} = U \circ \boldsymbol{\Phi}$ satisfies 
\begin{subequations}
\label{eq:mapped_strong_form}
\begin{equation}
\widetilde{\nabla}   \cdot F_{\Phi}(\widetilde{U}) \, = \, S_{\Phi}(\widetilde{U}) \quad  {\rm in} \; \Omega,
\end{equation}
where  
\begin{equation}
\label{eq:mapped_strong_form_b}
F_{\Phi}(\cdot) = g  F(\cdot) \mathbf{G}^{-T} ,
\quad
S_{\Phi}(\cdot) = g  S(\cdot).
\end{equation}
We recall that $\mathbf{G},g$ are the Jacobian matrix and determinant defined in the introduction.
\end{subequations}

\begin{remark}
\label{remark:ALE}
Arbitrary Lagrangian Eulerian (ALE, \cite{hirt1974arbitrary,donea2017arbitrary}) methods involve the use of space-time mappings of the form
$\boldsymbol{\Phi}(X,t) = [    \Phi_1(X,t), t]$: this class of mappings allows the use of time-marching schemes to solve \eqref{eq:steady_conslaw}. Note, however, that  the
 deformation that can be achieved using this class of mappings is relatively modest: in particular, for any given time $t>0$, the mapped solution $\widetilde{U}$ has the same number of discontinuities as $U$, possibly at different locations. In the numerical results
of section \ref{sec:numerics} and Appendix \ref{sec:data_compression_vis}, we empirically show that the possibility of ``moving" shock waves in space and time is key to improve the linear reducibility of the mapped manifold.
\end{remark}

\subsection{High-fidelity space-time formulation}
We discretize \eqref{eq:mapped_strong_form} using a high-order nodal DG method.
In presence of shocks and other discontinuities, 
high-order schemes for hyperbolic PDEs require specific stabilization techniques to avoid instabilities. In this work, we resort to the sub-cell shock capturing method based on artificial viscosity proposed in 
\cite{persson2006sub}. More in detail, we consider the piecewise-constant artificial viscosity
\begin{subequations}
\label{eq:artificial_viscosity}
\begin{equation}
\varepsilon(U)\big|_{\texttt{D}^k} \, = \, \varepsilon_0 + 
\varepsilon_{\rm pp} \left( \log_{10} S_k \right),
\quad
S_k \, = \, \frac{\| s(U)  - \Pi_{p-1} s(U)  \|_{L^2(\texttt{D}^k)}}{  \| s(U) \|_{L^2(\texttt{D}^k)} },
\end{equation}
where $\varepsilon_0>0$ is a positive constant,
$s(U)$ is a suitable scalar function of the state, 
$\Pi_{p-1}:  \mathbb{P}^p  \to  \mathbb{P}^{p-1}$ is the projection onto the space of polynomials of total degree $p-1$,   and
\begin{equation}
\varepsilon_{\rm pp}(s) \, = \,
\left\{
\begin{array}{ll}
0 & s < s_0 - \kappa \\
\displaystyle{
\frac{\epsilon_0}{2} \left(1 + \sin \left(  
\frac{\pi (s - s_0)}{2 \kappa} \right)   \right)}
&
s_0 - \kappa \leq s < s_0 + \kappa \\
\epsilon_0
&
s \geq  s_0 + \kappa \\
\end{array}
\right.
\end{equation}
In this work, we consider $s(U)=U$ for Burgers equation and 
$s(U)=h$ for the shallow water equations (here, $h$ is the flow height, see  section \ref{sec:saintvenant_model_problem}); representative values of the constants in \eqref{eq:artificial_viscosity} considered in the numerical simulations are
$s_0=-2.5, \kappa=1.5, \epsilon_0 = 10^{-2},
\varepsilon_0=5 \cdot 10^{-4}$.
\end{subequations}

We have now the elements to introduce the  DG discretization of \eqref{eq:mapped_strong_form}: find
$\widetilde{U}_{\rm hf} \in \mathcal{X}_{\rm hf}$ such that
\begin{equation}
\label{eq:DG_pb}
R_{\Phi} \left(  \widetilde{U}_{\rm hf}, v \right)
=
R_{\Phi}^{\rm c} \left(  \widetilde{U}_{\rm hf}, v \right)
+
R^{\rm d} \left(  \widetilde{U}_{\rm hf}, v \right)
= 0,
\quad
\forall \; v \in \mathcal{X}_{\rm hf},
\end{equation}
where the variational discrete operator $R_{\Phi}: \mathcal{X}_{\rm hf} \times \mathcal{X}_{\rm hf} \to \mathbb{R}$ is the sum of the convection and diffusion contributions. Here, $R_{\Phi}^{\rm c} $ is given by
\begin{subequations}
\label{eq:DG_convective}
\begin{equation}
\begin{array}{l}
\displaystyle{
R_{\Phi}^{\rm c}(w,v)  =
\sum_{k=1}^{ N_{\rm e} } \;  
r_k^{\rm c}(w,v)
=
\sum_{k=1}^{ N_{\rm e} } \; 
\int_{\partial \texttt{D}^k} 
v \cdot \mathcal{H}_{\Phi}(w^+, w^-, \mathbf{N})  d \mathbf{X} \, - \,
\int_{  \texttt{D}^k} 
\widetilde{\nabla}  v\cdot  
F_{\Phi}(w )
 \; d \mathbf{X} \, 
-
\int_{ \texttt{D}^k} 
v\cdot S_{\Phi}(w) d \mathbf{X},
}
\\
\end{array}
\end{equation}
where $\mathcal{H}_{\Phi}$ is the numerical convective flux. Following \cite{zahr2019implicit}, we choose $\mathcal{H}_{\Phi}$  such that
\begin{equation}
\mathcal{H}_{\Phi}(\widetilde{U}_{\rm hf}^+, \widetilde{U}_{\rm hf}^-, \mathbf{N} )
=
\| g \mathbf{G}^{-T} \mathbf{N}  \|_2 
\mathcal{H}( \widetilde{U}_{\rm hf}^+, \widetilde{U}_{\rm hf}^-, \mathbf{n} ),
\quad
{\rm with} \; \; 
\mathbf{n} = \frac{\mathbf{G}^{-T} \mathbf{N} }{\| \mathbf{G}^{-T} \mathbf{N} \|_2},
\end{equation}
where $\mathcal{H}$ is a standard numerical flux in the physical domain. Note that for piecewise-linear maps \eqref{eq:DG_convective} is equivalent to the "Eulerian"  DG convective term  associated with the numerical flux  $\mathcal{H}$, and with the triangulation
$\mathcal{T}_{\rm hf}^{\star} = \{ \boldsymbol{\Phi} ( 
\texttt{D}^k) \}_{k=1}^{N_{\rm e}}$. In the numerical examples of this paper, we resort to the local  Lax-Friedrichs (Rusanov) flux:
\begin{equation}
\mathcal{H}(U^+, U^-, \mathbf{n} )
=
\frac{1}{2} \left( F(U^+) + F(U^-) \right) 
-
\frac{\tau}{2} \mathbf{n}^+ ( U^+ - U^- )^T,
\quad
\tau :=
\max \{  \big|\partial_U F(U^+) \mathbf{n}  \big|, \;
\big|\partial_U F(U^-) \mathbf{n}  \big|  \}.
\end{equation}
\end{subequations}
The diffusion form 
$R^{\rm d} $ is defined as 
\begin{equation}
\label{eq:DG_diffusion}
R^{\rm d}(w,v)  =
\sum_{k=1}^{ N_{\rm e} } \;  
r_k^{\rm d}(w,v)
=
\sum_{k=1}^{ N_{\rm e} } \; 
\int_{\partial \texttt{D}^k} 
\mathcal{H}^{\rm d}( w, v;  \varepsilon(U) \mathbbm{1}, \mathbf{N}) \;   d \mathbf{X} \, - \,
\int_{  \texttt{D}^k} 
\; \varepsilon(U) \; 
\widetilde{\nabla}  w  \cdot  
\widetilde{\nabla}  v
 \; d \mathbf{X},
\end{equation}
where  $\mathcal{H}^{\rm d}$ is  the BR2 diffusion flux 
associated with the diffusion matrix
$\mathbf{K} = \varepsilon(U) \mathbbm{1}$. Note that we consider an   artificial-diffusion form that is independent of the mapping $\boldsymbol{\Phi}$.

\begin{remark}
 {In this work, we resort to the  space-time   solver  discussed here to generate the hf snapshots;}
 the space-time   formulation of the conservation law \eqref{eq:1Dconslaws} also  provides the foundations for the projection-based ROM proposed in section 
\ref{sec:projection_based_ROM}.
We envision, however, that
space-time solvers might not be feasible for large-scale two-dimensional and three-dimensional problems: 
 {in this case, we might employ a third-party time-marching solver to generate the space-time snapshots and then use the space-time formulation  exclusively for ROM calculations.}
We refer to a future work for the integration between an external {time-marching} solver and the space-time ROM.
We further remark that other choices for the convection and diffusion fluxes and for the artificial viscosity are available: we refer to the   DG literature for thorough discussions and comparisons.
\end{remark}

\subsection{Model problems}
\label{sec:model_problems}
\subsubsection{A Burgers model problem}
\label{sec:burgers_model_problem}

We consider the Burgers equation:
\begin{subequations}
\label{eq:burgers_strong}
\begin{equation}
\left\{
\begin{array}{ll}
\displaystyle{
\partial_t U_{\mu}  + \frac{1}{2}  \partial_x U_{\mu}^2 \; = \; 0 }
&
\displaystyle{
(x,t) \in \Omega = (0,L) \times (0,T)
}
\\[3mm]
\displaystyle{
U_{\mu}(x,0) \; = \;  U_{\rm D, \mu}(x) }
&
\displaystyle{
x \in   (0,L)  
}
\\[3mm]
\displaystyle{
U_{\mu}(0,t) \; = \;  U_{\rm D, \mu}(0) }
&
\displaystyle{
t \in   (0,T)  
}
\\
\end{array}
\right.
\end{equation}
where $L=1$, $T=0.8$, and
\begin{equation}
U_{\rm D,\mu}(x)
= \mu_1 
\left(
2 - H_{\nu}(x - \mu_2) 
- H_{\nu}(x - \frac{1}{2}) 
\right)
+
0.3 \sin \left( \pi x \right),
\quad
H_{\nu}(s)  = \frac{1}{1+ e^{-\nu s}},
\;\;\nu=260.
\end{equation}
Here, we consider the parameter domain $\mathcal{P} = [1,1.3] \times [0.25,0.35]$. 
{We observe that a similar model problem has been considered in \cite[section 4.2]{peherstorfer2020model}.}
\end{subequations}

\begin{figure}[h!]
\centering
\subfloat[] 
{  \includegraphics[width=0.4\textwidth]
 {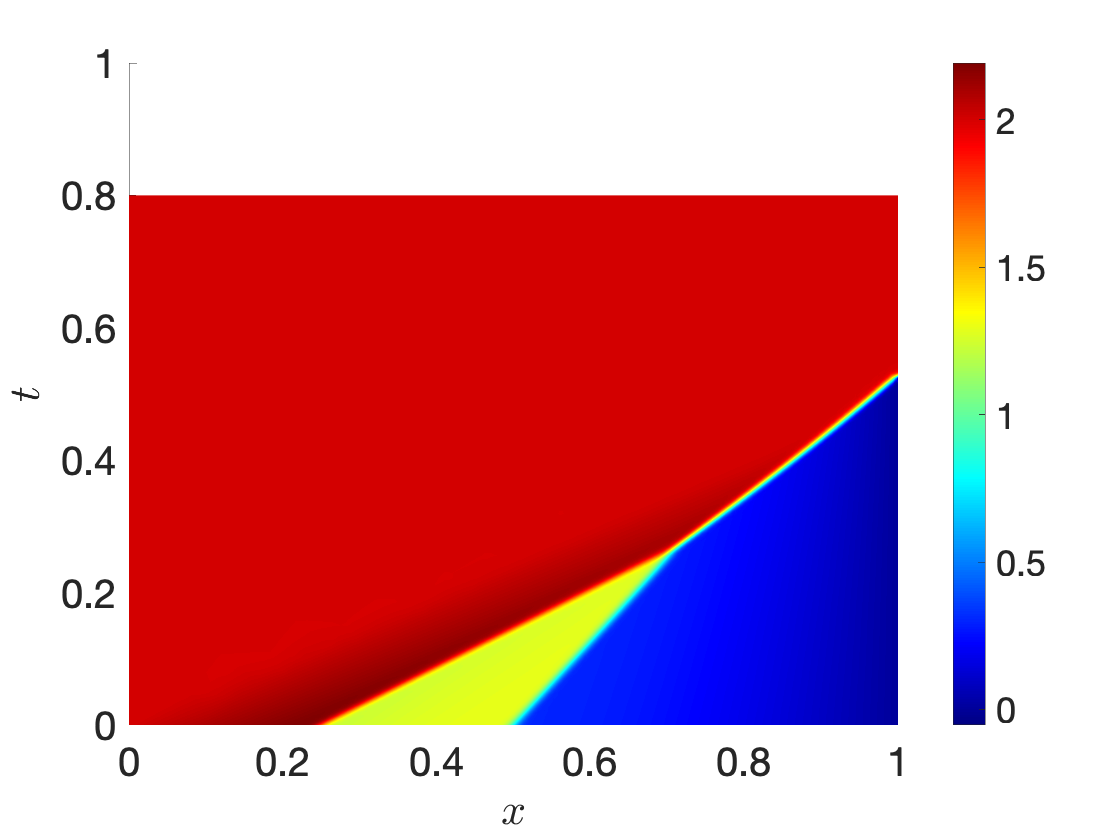}}
  ~~
 \subfloat[] 
{  \includegraphics[width=0.4\textwidth]
 {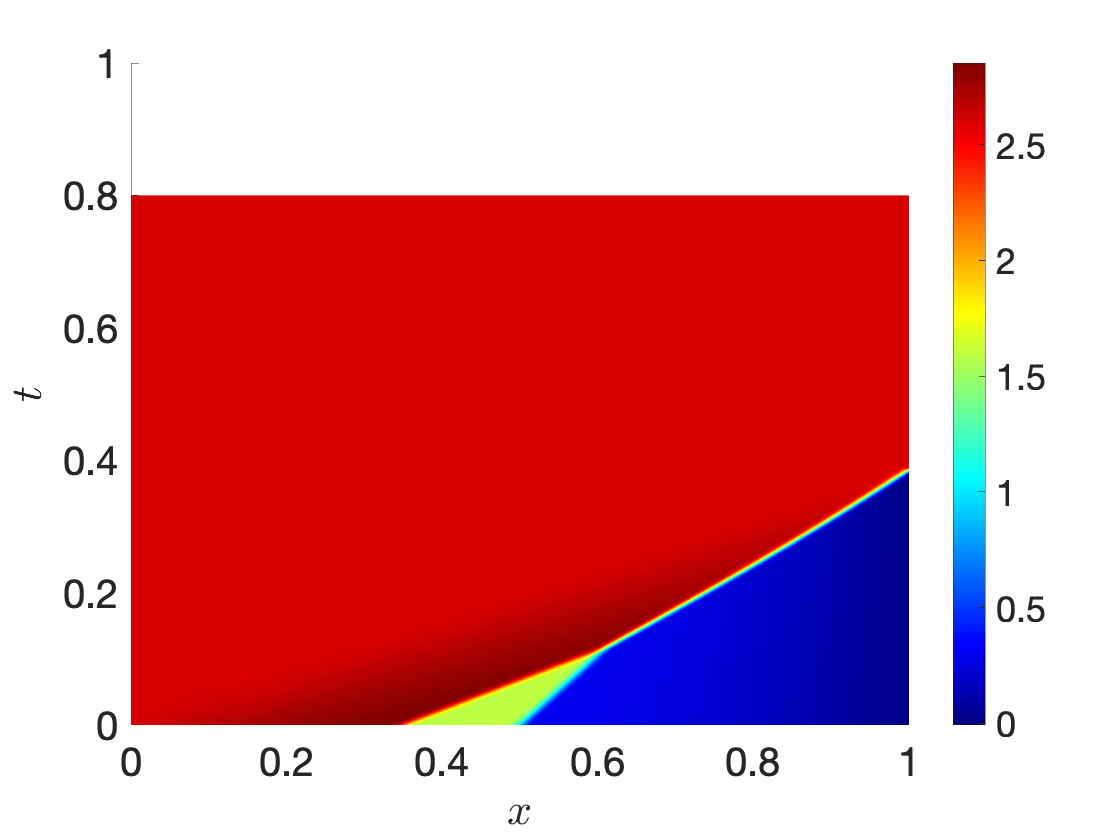}}
 
 \caption{solution to Burgers equation. (a) $\mu=[1,0.25]$. (b)
 $\mu=[1.35,0.35]$.}
 \label{fig:vis_burgers}
  \end{figure}  

Figure \ref{fig:vis_burgers} shows the behavior of $U$ over $\Omega$ for two values of $\mu$.
The solution is characterized by the transition between the three ``prototypical" behaviors depicted in Figure \ref{fig:vis_burgers_2}:
for small values of $t$, the solution exhibits two shock waves (cf. 
Figure \ref{fig:vis_burgers_2}(a));
for intermediate values of $t$, the solution exhibits one shock wave 
(cf. 
Figure \ref{fig:vis_burgers_2}(b));
for  large values of $t$, the solution is nearly constant over $(0,L)$  
(cf. 
Figure \ref{fig:vis_burgers_2}(c)).  
Despite its simplicity, this problem is extremely challenging for model reduction techniques: linear methods (i.e., methods based on linear approximation spaces) require a large number of modes to correctly represent the solution; on the other hand, 
to our knowledge, the transition from two shocks to one shock and from one shock to the smooth solution poses fundamental challenges for several nonlinear proposals: see  the discussion in \cite{rim2019manifold}.

\begin{figure}[h!]
\centering
\subfloat[$t=0.05$] 
{  \includegraphics[width=0.32\textwidth]
 {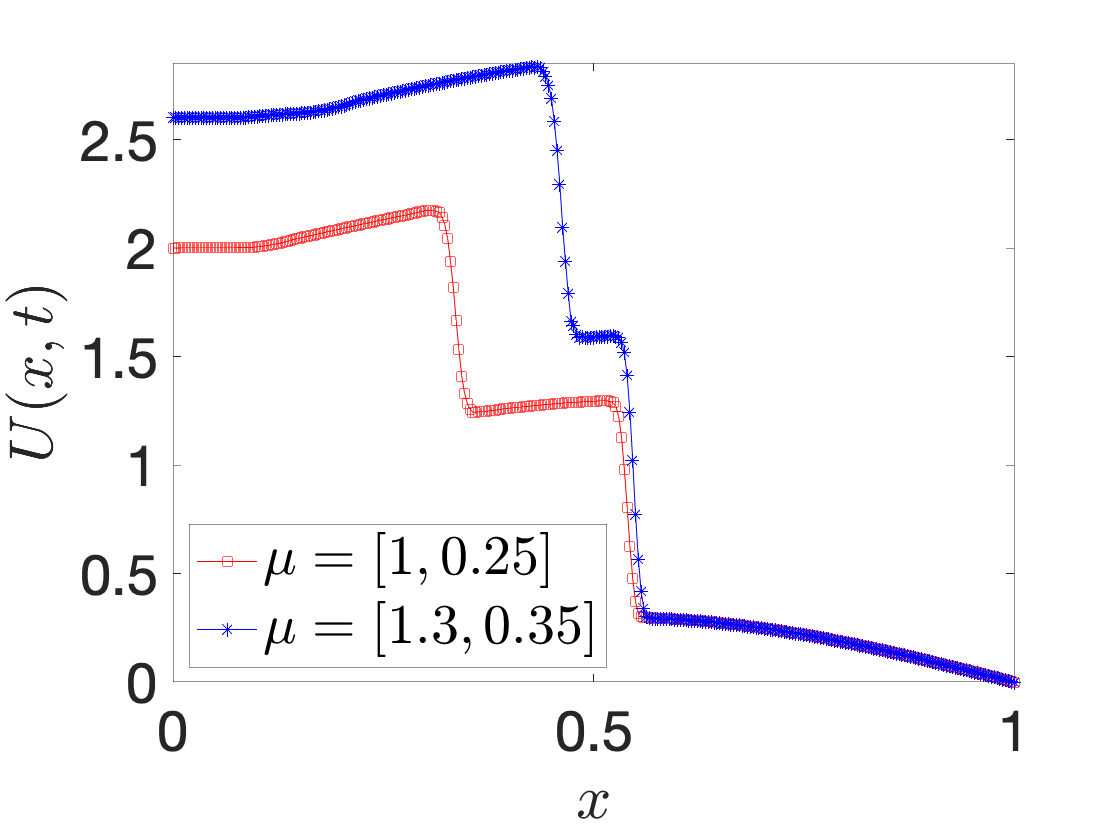}}
  ~~
 \subfloat[$t=0.3$] 
{  \includegraphics[width=0.32\textwidth]
 {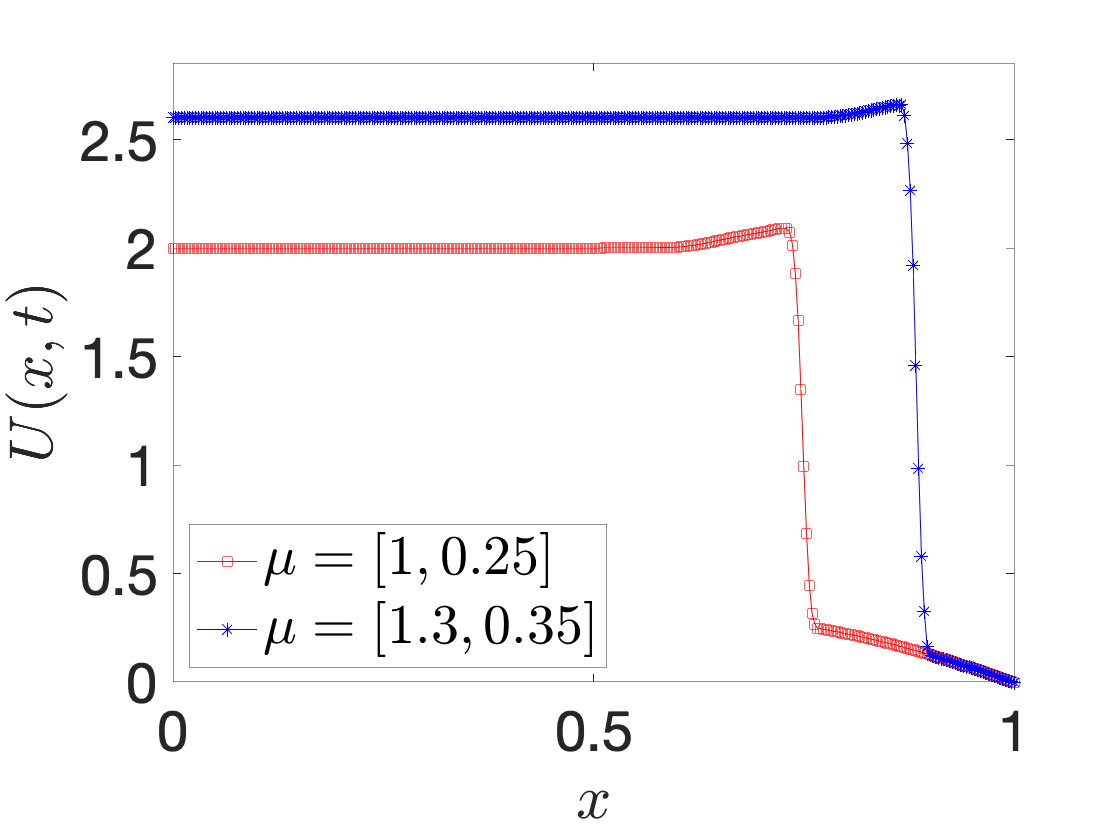}}
   ~~
 \subfloat[$t=0.8$] 
{  \includegraphics[width=0.32\textwidth]
 {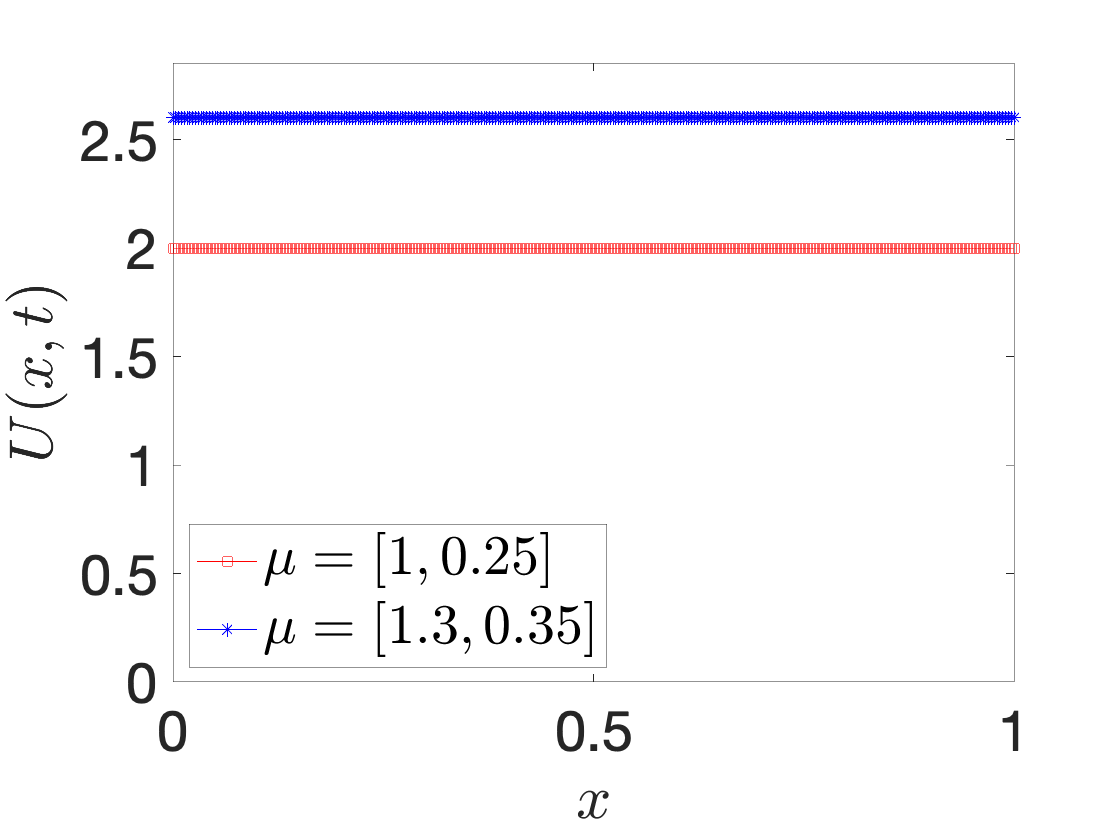}}
 
 \caption{solution to Burgers equation for   $\mu=[1,0.25]$ and 
 $\mu=[1.35,0.35]$ at  three different time instants.}
 \label{fig:vis_burgers_2}
  \end{figure}  

\subsubsection{A shallow-water model problem}
\label{sec:saintvenant_model_problem}

We consider a transient  shallow water (Saint Venant)  flow over a bump in a frictionless channel.
We here denote by   $U=[h, q]^T$ the vector of conserved variables,  where $q=h u$ is the discharge, $h$ is the flow height, and $u$ is the flow $x$-velocity; 
then, we introduce the flux $f(U): =  [ q, h u^2 + \frac{g}{2}
h^2  ]^T$ where $g=9.81$ is the gravity acceleration;   we further  introduce the (parameter-independent) bathymetry  $b$ such that
\begin{subequations}
\label{eq:saint_venant_strong}
\begin{equation}
\label{eq:bathymetry_saint_venant}
b(x) :=  -0.2 \, + \, e^{-0.125 (x - 10)^4}.
\end{equation}
We consider the system of Saint-Venant equations:
\begin{equation}
\left\{
\begin{array}{ll}
\displaystyle{
\partial_t U_{\mu}  +   \partial_x f ( U_{\mu} )  \; = \; S(U_{\mu}) 
}
&
\displaystyle{
(x,t) \in \Omega = (0,L) \times (0,T);
}
\\[3mm]
\displaystyle{U_{\mu}(x,0) \; = \;  U_{\rm bf}(x) }
&
\displaystyle{x \in   (0,L)};
\\[3mm]
\displaystyle{
q_{\mu}(0,t) \; = \;  q_{\rm in, \mu}(t),
\;\;
h_{\mu}(L,t) \; = \;   h_{\infty}
}
&
\displaystyle{
t \in   (0,T) ;
}
\\
\end{array}
\right.
\end{equation}
where  
$L=25$, $T=3$, 
$h_{\infty}=2$, the source term satisfies
$S(U) = [0, - g  h \partial_x b]^T$ and the discharge $q_{\rm in, \mu}$ satisfies
\begin{equation}
q_{\rm in, \mu}(t)
\; =\;
q_0  \;  \left( 1+ \mu_1 \, t \, 
e^{  -\frac{1}{2 \mu_2^2}(t - 0.05)^2   } \right),
\quad
q_0 = 4.4.
\end{equation}
Note that the parameter $\mu_1$ influences the {peak} of the incoming discharge, while 
$\mu_2$ affects the time scale and the integral
$\int_0^T q_{\rm in, \mu}(t) \; dt$. Here, we consider
$\mu \in \mathcal{P} = [2,8] \times [0.1,0.2]$. Finally, $ U_{\rm bf}$ corresponds to the limit solution for $t \to \infty$ of the PDE:
\begin{equation}
\left\{
\begin{array}{ll}
\displaystyle{
\partial_t U   +   \partial_x f ( U )  \; = \; S(U) 
}
&
\displaystyle{
(x,t) \in \Omega = (0,L) \times (0,\infty);
}
\\[3mm]
\displaystyle{U(x,0) \; = \;  0}
&
\displaystyle{x \in   (0,L)};
\\[3mm]
\displaystyle{
q(0,t) \; = \;  q_0, 
\;\;
h(L,t) \; = \;   h_{\infty},
}
&
\displaystyle{
t \in   (0,\infty)  .
}
\\
\end{array}
\right.
\end{equation}
\end{subequations}

In Figures \ref{fig:vis_st_venant} and \ref{fig:vis_st_venant_2}, we show the 
 behavior of the free surface $z=h +b$ for two values of the parameter $\mu$ in $\mathcal{P}$.
We observe that for $t \approx 1.5$ the incoming wave associated with the inflow boundary condition interacts with the bump; for sufficiently large values of $\mu_1$, we also observe a backward-propagating wave generated by the interaction between the incoming wave and the bump.

\begin{figure}[h!]
\centering
\subfloat[] 
{  \includegraphics[width=0.4\textwidth]
 {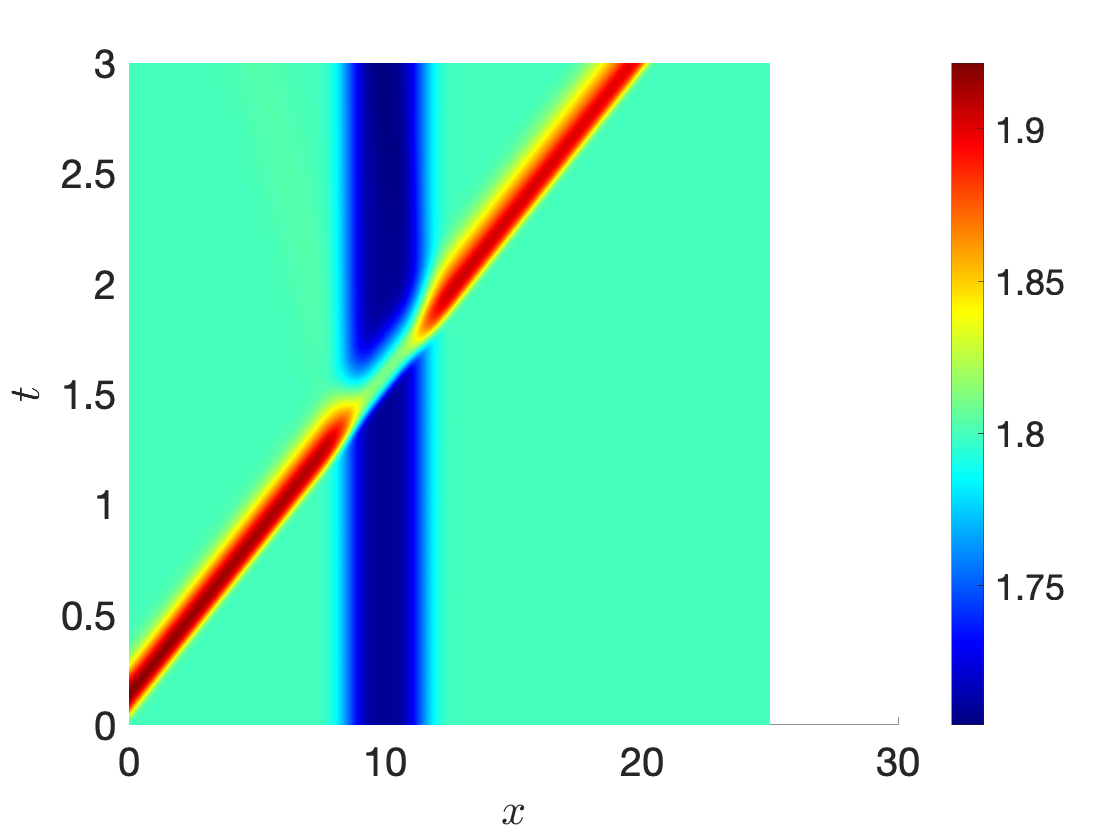}}
  ~~
 \subfloat[] 
{  \includegraphics[width=0.4\textwidth]
 {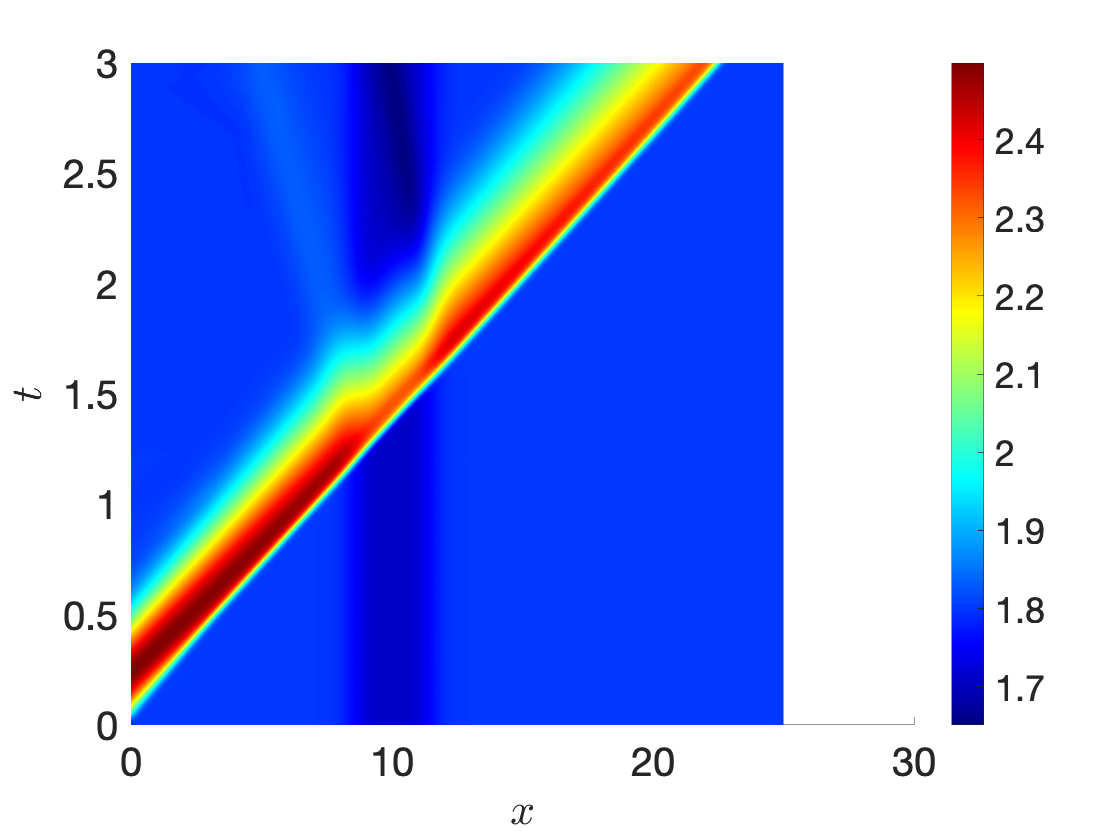}}
 
 \caption{solution to shallow water equations; behavior of the free surface $z=h +b$. (a) $\mu=[2,0.1]$. (b) $\mu=[8,0.2]$.}
\label{fig:vis_st_venant}
\end{figure}  

\begin{figure}[h!]
\centering
\subfloat[$t=0.4$] 
{  \includegraphics[width=0.32\textwidth]
 {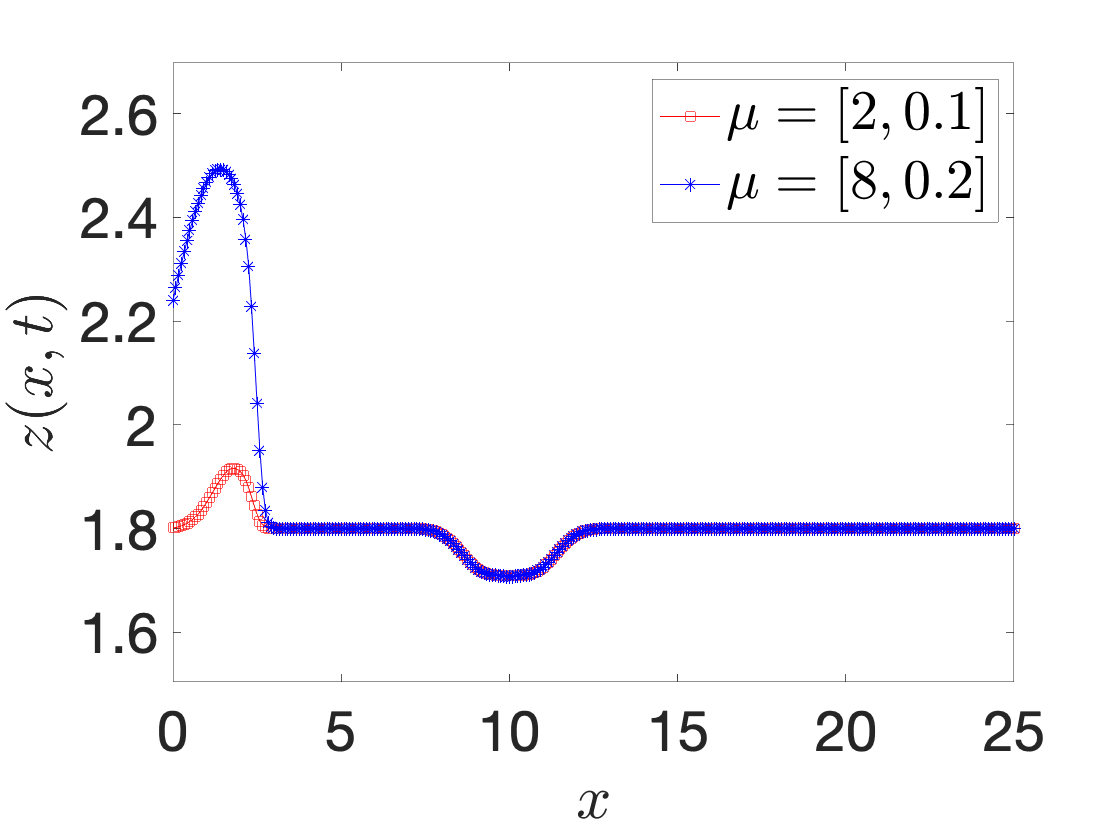}}
  ~~
 \subfloat[$t=1.5$] 
{  \includegraphics[width=0.32\textwidth]
 {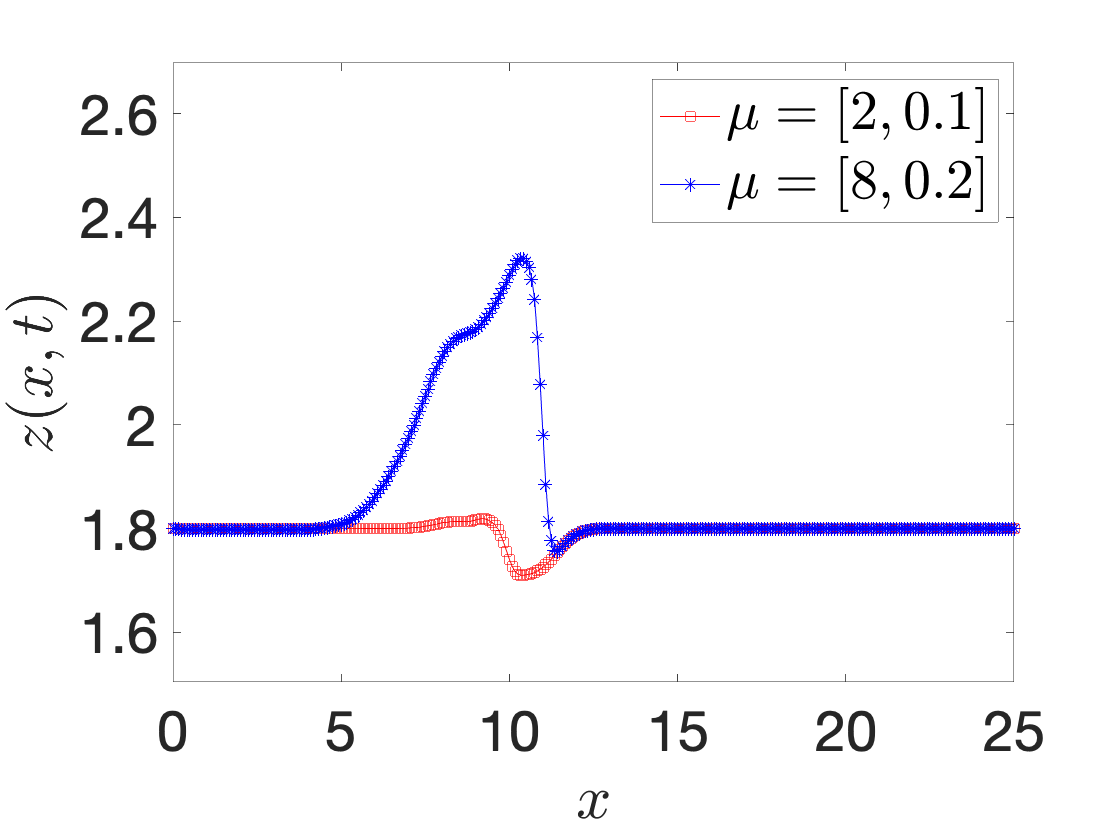}}
   ~~
 \subfloat[$t=3$] 
{  \includegraphics[width=0.32\textwidth]
 {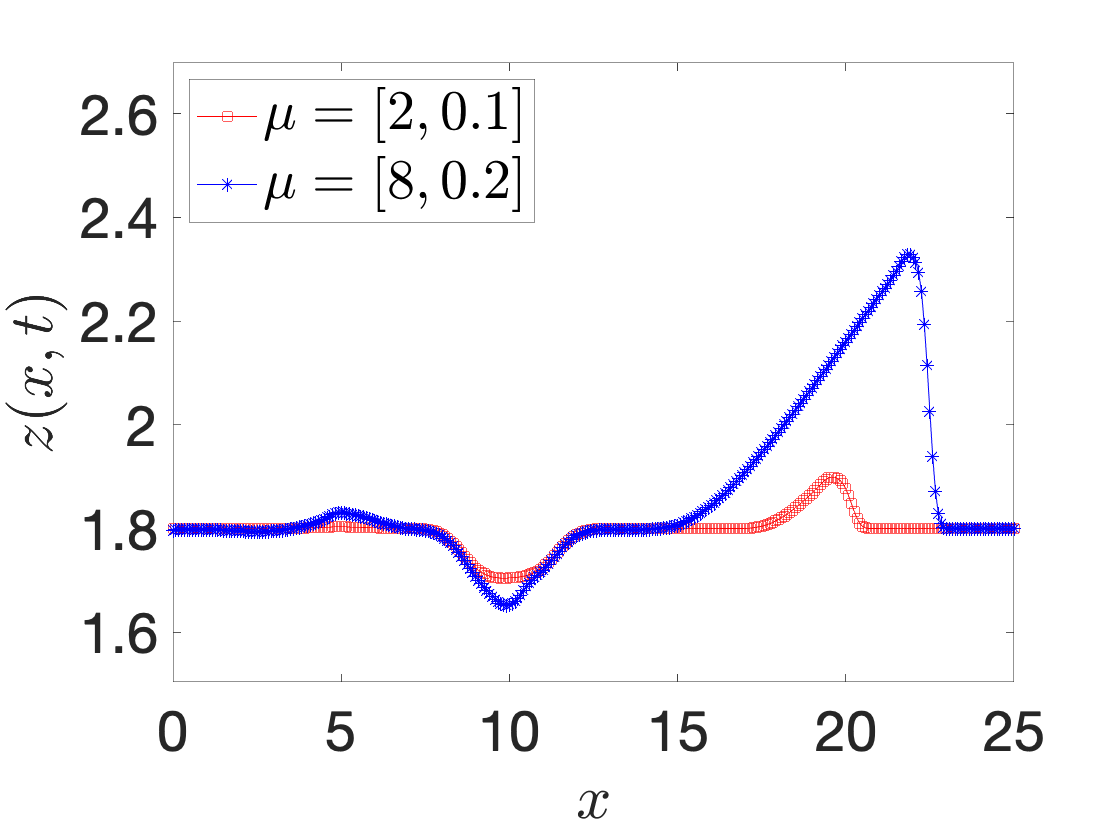}}
 
 \caption{solution to  shallow water equations; behavior of the free surface $z=h +b$  for   $\mu=[2,0.1]$ and 
 $\mu=[8,0.2]$ at  three different time instants.}
 \label{fig:vis_st_venant_2}
  \end{figure}

\section{Data compression (\texttt{RePOD})}
\label{sec:registration}
We denote by $\mathcal{W}_{\rm hf} = {\rm span} \{ \boldsymbol{\varphi}_m^{\rm hf}  \}_{m=1}^{M_{\rm hf}}$   a $M_{\rm hf}$-dimensional space contained in ${\rm Lip}(\Omega; \mathbb{R}^2)$; 
following \cite{taddei2020registration},  given the  identity function $\texttt{id} (\mathbf{X}) = \mathbf{X}$ for all $\mathbf{X} \in \Omega$,  we seek mappings of the form
\begin{equation}
\label{eq:mapping_general}
\boldsymbol{\Phi}_{\mu}(\mathbf{X}) = \texttt{id}  (\mathbf{X}) + 
\boldsymbol{\varphi}_{\mu}(\mathbf{X}),
\quad
\boldsymbol{\varphi}_{\mu} \in \mathcal{W}_{\rm hf},
\;\; \forall \; \mu \in \mathcal{P}.
\end{equation}
We refer to $\boldsymbol{\varphi}$ as to   parameterized displacement, and we denote by 
$\mathcal{C}_{\rm hf}$ a subset of $\mathcal{W}_{\rm hf} $ such that $\boldsymbol{\Phi} = \texttt{id} + 
\boldsymbol{\varphi}$ is bijective from $\Omega$ in itself for all $\boldsymbol{\varphi} \in \mathcal{C}_{\rm hf}$.

We devise a computational procedure that takes as input the snapshots $\{ U^k \}_{k=1}^{n_{\rm train}} \subset \mathcal{M}$ and returns 
(i) the linear operators $Z_N: \mathbb{R}^N \to \mathcal{X}$ and
$W_M: \mathbb{R}^M \to  [ {\rm Lip}(\Omega) ]^2$ in
\eqref{eq:mapping_representation}, and 
(ii) the coefficients $\{  \boldsymbol{\alpha}^k \}_{k=1}^{n_{\rm train}} \subset \mathbb{R}^N$ and  $\{  \mathbf{a}^k \}_{k=1}^{n_{\rm train}} \subset \mathbb{R}^M$ such that
$U^k \approx
\widehat{U}^k \circ (\boldsymbol{\Phi}^k)^{-1}$, with 
$\widehat{U}^k = Z_N \boldsymbol{\alpha}^k$ and 
$\boldsymbol{\Phi}^k = 
\texttt{id} + W_M \mathbf{a}^k$. 
Towards this end, we proceed as follows.

\begin{enumerate}
\item
In section \ref{sec:affine_mappings}, we present guidelines for the definition of the space $\mathcal{W}_{\rm hf}$ and we provide sufficient and computationally-feasible conditions for the bijectivity of $\boldsymbol{\Phi}$.
The discussion exploits results first presented in \cite{taddei2020registration} and is here reported for completeness.
\item
In section \ref{sec:optimization}, given a dictionary of functions --- here referred to as \emph{template space} --- $\mathcal{T}_N \subset L^2(\Omega)$ and the field $\mathfrak{s}^k $, we present a general optimization-based  procedure for the construction of a mapping $\boldsymbol{\Phi}^k$ such that 
$\tilde{\mathfrak{s}}^k =
{\mathfrak{s}}^k  \circ \boldsymbol{\Phi}^k$ is well-approximated by elements in
$\mathcal{T}_N$. 
Our approach  is a   generalization of the optimization-based technique  in \cite{taddei2020registration}; it exploits the theoretical results of section \ref{sec:affine_mappings} to effectively enforce the bijectivity of the mapping.
The field  $\mathfrak{s}^k$ is a suitable function of the $k$-th snapshot, $\mathfrak{s}^k = \mathfrak{s}(U^k)$, that will be introduced below.
\item
In section \ref{sec:generalization}, we present a greedy procedure for the adaptive construction of the template space $\mathcal{T}_N$. Our greedy approach returns the mappings $\{ \boldsymbol{\Phi}^k \}_k$ for all training points and the low-dimensional operator 
$W_M: \mathbb{R}^M \to 
\mathcal{W}_M \subset 
\mathcal{W}_{\rm hf}$ such that 
 $\boldsymbol{\Phi}^k = \texttt{id} + W_M \mathbf{a}^k$, for some $\mathbf{a}^1, \ldots,\mathbf{a}^{n_{\rm train}} \in \mathbb{R}^M$.
 \item
 In section \ref{sec:POD_compression}, we finally apply POD to the mapped snapshots $\{ \widetilde{U}^k = U^k \circ \boldsymbol{\Phi}^k \}_k$ to obtain the reduced operator $Z_N: \mathbb{R}^N \to \mathcal{Z}_N \subset \mathcal{X}$ and the solution coefficients
 $\boldsymbol{\alpha}^1, \ldots,\boldsymbol{\alpha}^{n_{\rm train}} \in \mathbb{R}^N$.
\end{enumerate}

\subsection{Affine mappings}
\label{sec:affine_mappings}
Next Proposition provides the mathematical foundations for the registration algorithm discussed below.

\begin{proposition}
\label{th:application_unit_square}
(\cite[Proposition 2.3]{taddei2020registration})
Given $\Omega = (0,L) \times (0,T)$, consider the mapping 
$\boldsymbol{\Phi} =  \texttt{id}+ \boldsymbol{\varphi}$, where
\begin{equation}
\label{eq:boundary_conditions_mapping}
\left\{
\begin{array}{ll}
\boldsymbol{\varphi}   \cdot \mathbf{e}_1 \, = \, 0 
&
{\rm on} \; \{ \mathbf{X}: \; X_1=0, {\rm or} \, X_1=L\},
\\[3mm]
\boldsymbol{\varphi}   \cdot \mathbf{e}_2 \, = \, 0 
&
{\rm on} \; \{ \mathbf{X}: \; X_2=0, {\rm or} \, X_2=T\}.
\\
\end{array}
\right.
\end{equation}
Then, $\boldsymbol{\Phi}$ is bijective from $\Omega$ into itself if 
\begin{equation}
\label{eq:condition_equivalent}
\min_{\mathbf{X} \in \overline{\Omega}} \,  
g(\mathbf{X}) = 
\,{\rm det} \left(
\widetilde{\nabla} \boldsymbol{\Phi}(\mathbf{X}) \right) \, > \, 0.
\end{equation}
\end{proposition}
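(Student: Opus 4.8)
The plan is to combine an elementary analysis of $\boldsymbol{\Phi}$ on $\partial\Omega$ with a standard topological fact: a proper local homeomorphism onto a connected, simply connected, locally compact Hausdorff space is a global homeomorphism. Throughout I treat $\boldsymbol{\Phi}=\texttt{id}+\boldsymbol{\varphi}$ as continuous on $\overline{\Omega}$ (it is, since $\boldsymbol{\varphi}\in{\rm Lip}(\Omega;\mathbb{R}^2)$) and, for the sake of the argument, as differentiable with continuous Jacobian up to $\partial\Omega$ with $g>0$ there; in the finite element setting this is to be read elementwise, and the general Lipschitz case is recovered by a mollification/limiting argument that I would only sketch.

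\emph{Step 1 (behaviour on $\partial\Omega$).} Using the boundary conditions \eqref{eq:boundary_conditions_mapping} I would first show that $\boldsymbol{\Phi}$ fixes the four corners of $\Omega$ and maps each of the four edges homeomorphically onto itself. For the edge $E_0=\{0\}\times[0,T]$: the condition $\boldsymbol{\varphi}\cdot\mathbf{e}_1=0$ on $\{X_1=0\}$ gives $\Phi_1\equiv 0$ on $E_0$, whence $\partial_{X_2}\Phi_1=0$ along $E_0$ and therefore $g|_{E_0}=(\partial_{X_1}\Phi_1)\,(\partial_{X_2}\Phi_2)$. Since $g>0$, the tangential derivative $\partial_{X_2}\Phi_2$ never vanishes on $E_0$; being continuous it has constant sign, and since the corner conditions force $\Phi_2(0,0)=0$ and $\Phi_2(0,T)=T$, that sign is positive. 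Hence $s\mapsto\Phi_2(0,s)$ is a strictly increasing continuous bijection of $[0,T]$; the same holds for the other three edges, and gluing at the corners shows $\boldsymbol{\Phi}|_{\partial\Omega}$ is a homeomorphism of $\partial\Omega$ onto itself (in particular, no edge can ``overshoot'' a corner).

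\emph{Step 2 (image in $\overline{\Omega}$ and conclusion).} By the inverse function theorem, $g>0$ makes $\boldsymbol{\Phi}|_{\Omega}$ an open map. If some interior point were mapped outside $\overline{\Omega}$, the nonempty open set $\Omega^{+}:=\{\mathbf{X}\in\Omega:\boldsymbol{\Phi}(\mathbf{X})\notin\overline{\Omega}\}$ would have image $\boldsymbol{\Phi}(\Omega^{+})$ open, bounded, contained in $\mathbb{R}^2\setminus\overline{\Omega}$, and also relatively closed there — because $\boldsymbol{\Phi}$ sends $\partial\Omega^{+}$ into $\partial\Omega\subseteq\overline{\Omega}$, by Step 1 on $\partial\Omega^{+}\cap\partial\Omega$ and by the definition of $\Omega^{+}$ on $\partial\Omega^{+}\cap\Omega$; this would exhibit a nonempty bounded relatively clopen subset of the connected unbounded set $\mathbb{R}^2\setminus\overline{\Omega}$, which is impossible. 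Thus $\boldsymbol{\Phi}(\Omega)\subseteq\overline{\Omega}$, hence $\subseteq\Omega$ (an open set contained in $\overline{\Omega}$), while $\boldsymbol{\Phi}(\partial\Omega)=\partial\Omega$ by Step 1. Now $\boldsymbol{\Phi}|_{\Omega}:\Omega\to\Omega$ is a local homeomorphism which is proper ($\boldsymbol{\Phi}^{-1}(K)$ is closed and bounded in $\overline{\Omega}$ and disjoint from $\partial\Omega$, hence compact, for any compact $K\subseteq\Omega$), so it is a finite-sheeted covering map; since $\Omega$ is simply connected this covering is trivial, so $\boldsymbol{\Phi}|_{\Omega}$ is a homeomorphism of $\Omega$ onto itself, and together with Step 1 and a routine compactness argument at $\partial\Omega$ this shows $\boldsymbol{\Phi}$ is bijective from $\Omega$ onto itself.

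\textbf{Main obstacle.} The crux is Step 1 — turning the sign condition on $g$ into monotonicity of the edge restrictions — and the attendant regularity of $\boldsymbol{\Phi}$ up to $\partial\Omega$: the factorization $g|_{E_0}=(\partial_{X_1}\Phi_1)(\partial_{X_2}\Phi_2)$ invokes the one-sided normal derivative $\partial_{X_1}\Phi_1$, which is harmless for $C^1$ (or globally continuous, piecewise-polynomial) maps but delicate for merely Lipschitz $\boldsymbol{\varphi}$; in that generality one argues instead that an interior extremum of $s\mapsto\Phi_2(0,s)$ would force $g=0$ there, invoking Rademacher's theorem and a degree-theoretic substitute for the inverse function theorem. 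Step 2 is then purely topological.
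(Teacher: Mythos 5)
Your argument is sound, at least in the $C^1$ setting in which the pointwise condition \eqref{eq:condition_equivalent} is actually meaningful and which is the only case the paper ever uses (the search space $\mathcal{W}_{\rm hf}$ in \eqref{eq:tensorized_polynomials} consists of polynomial displacements). Note first that the paper itself contains no proof of Proposition \ref{th:application_unit_square}: it is imported verbatim, with a citation, from the earlier registration paper. Your Step 1 --- corners are fixed, and on a vertical edge the tangential derivative of $\Phi_1$ vanishes so that $g=(\partial_{X_1}\Phi_1)(\partial_{X_2}\Phi_2)$, forcing strict monotonicity of the tangential component and hence that $\boldsymbol{\Phi}|_{\partial\Omega}$ is a homeomorphism of $\partial\Omega$ onto itself --- is precisely what the boundary conditions \eqref{eq:boundary_conditions_mapping} are designed to deliver, and matches the boundary analysis in the cited source. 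Where you genuinely diverge is the globalization: the standard route (and the one taken in the reference) is to quote a ready-made global inversion theorem of Meisters--Olech/Ciarlet type --- a $C^1(\overline{\Omega})$ map with everywhere-positive Jacobian that is injective on $\partial\Omega$ is injective on $\overline{\Omega}$ and maps $\Omega$ onto the interior of the region bounded by $\boldsymbol{\Phi}(\partial\Omega)$ --- whose proof goes through Brouwer degree. You instead re-prove that theorem from scratch: the connectedness argument showing $\boldsymbol{\Phi}(\Omega)\subseteq\Omega$, then properness, then the proper-local-homeomorphism-is-a-covering lemma over the simply connected base. Both globalizations are correct; yours is self-contained but longer, the degree-theoretic one is a one-line citation and extends more readily to non-simply-connected reference domains. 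The only substantive caveat is the one you flag yourself: the one-sided derivative $\partial_{X_1}\Phi_1$ on the edge and the minimum over $\overline{\Omega}$ in \eqref{eq:condition_equivalent} presuppose $C^1$ regularity up to the boundary. That is harmless here, but your sketched mollification fallback for merely Lipschitz $\boldsymbol{\varphi}$ would need real care (smoothing need not preserve $g>0$ nor the boundary conditions), so I would either state the proposition for $C^1(\overline{\Omega};\mathbb{R}^2)$ displacements or drop that remark.
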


It can be shown that mappings satisfying 
\eqref{eq:boundary_conditions_mapping} and 
\eqref{eq:condition_equivalent}
map each edge of the rectangle in itself and  each corner   in itself.
We here enforce condition \eqref{eq:boundary_conditions_mapping} for all elements of the search space  $\mathcal{W}_{\rm hf}$: more precisely, we 
consider $\mathcal{W}_{\rm hf} = {\rm span} \{ \boldsymbol{\varphi}_m^{\rm hf}  \}_{m=1}^{M_{\rm hf}}$
with 
\begin{equation}
\label{eq:tensorized_polynomials}
\left\{
\begin{array}{l}
\boldsymbol{\varphi}_{m=
i + (i'-1)\bar{M}}^{\rm hf}(\mathbf{X})
=
\ell_i  \left( \frac{X_1}{L} \right)     \ell_{i'} \left( \frac{X_2}{T} \right)  
 \, 
\frac{X_1}{L^2}  (L -  X_1 ) \, \mathbf{e}_1 \\[2mm]
\boldsymbol{\varphi}_{m= \bar{M}^2 + 
i + (i'-1)\bar{M}}^{\rm hf}(\mathbf{X})
=
\ell_i  \left( \frac{X_1}{L} \right)     \ell_{i'} \left( \frac{X_2}{T} \right)   \, 
\frac{X_2}{T^2}  (T -  X_2 )  \, \mathbf{e}_2 \\
\end{array}
\right.
\quad
i,i'=1,\ldots, \bar{M},
\end{equation}
where $\{  \ell_i \}_{i=1}^{\bar{M}}$ are the first $\bar{M}$ Legendre polynomials in $(0,1)$ and $M_{\rm hf} = 2 \bar{M}^2$. Clearly,   other choices 
satisfying \eqref{eq:boundary_conditions_mapping}
(e.g., Fourier expansions) might also be considered.
On  the other hand, 
condition \eqref{eq:condition_equivalent} 
is difficult to impose computationally and should be replaced by a computationally feasible surrogate.

We propose to replace
 \eqref{eq:condition_equivalent}  with the 
approximation
\begin{equation}
\label{eq:condition_equivalent_weak}
\int_{\Omega} \, 
{\rm exp} \left( \frac{\epsilon  - g(\mathbf{X})}{C_{\rm exp}} \right) \,  + \, 
{\rm exp} 
\left(  \frac{g(\mathbf{X}) - 1/\epsilon }{C_{\rm exp}} 
\right)
\, dX \leq \delta,
\end{equation}
where $\epsilon \in (0,1)$; we further define the subset $\mathcal{C}_{\rm hf}$ of   $\mathcal{W}_{\rm hf}$ as 
 \begin{equation}
 \label{eq:calC_hf}
 \mathcal{C}_{\rm hf} = \left\{
 \boldsymbol{\varphi} \in \mathcal{W}_{\rm hf} \,:\,
 \boldsymbol{\Phi} =  \texttt{id} + \boldsymbol{\varphi}
 \; {\rm satisfies} \; 
 \eqref{eq:condition_equivalent_weak}
  \right\}.
 \end{equation}
Provided that $\boldsymbol{\varphi}$ is sufficiently smooth, condition \eqref{eq:condition_equivalent_weak} enforces the bijectivity of the mapping: more precisely, 
given  $\epsilon \in (0,1)$ and $C >0$ there exist 
$\delta, C_{\rm exp}> 0$ such that
if $\boldsymbol{\varphi}$ belongs to 
$$
 \mathcal{C}_{\rm hf}   \cap \mathcal{B}_{C,\infty} = \left\{
 \boldsymbol{\varphi} \in \mathcal{C}_{\rm hf} \,:\,
\|  \widetilde{\nabla} g \|_{L^{\infty}(\Omega)}  \leq C   \right\},
$$
 then $\boldsymbol{\Phi}$ is a bijection from $\Omega$ in itself (see \cite[section 2.2]{taddei2020registration}).

The constant  $\epsilon \in (0,1)$ can be interpreted as the maximum allowed pointwise contraction induced by the mapping $\boldsymbol{\Phi}$ and by its inverse. 
On the other hand, we observe that   the constant 
$\delta$ should satisfy
\begin{equation}
\label{eq:delta_condition}
\delta \geq  |\Omega| \; \left(
{\rm exp} \left( \frac{\epsilon  - 1}{C_{\rm exp}} \right) \,  + \, 
{\rm exp} \left( \frac{1 - 1/\epsilon }{C_{\rm exp}} \right) \right),
\end{equation}
so that $\boldsymbol{\varphi}= \mathbf{0}$ is admissible.
In all our numerical examples, we choose 
\begin{equation}
\label{eq:choice_Cdelta}
\epsilon=0.1, \quad 
C_{\rm exp} = 0.025 \epsilon, \quad
\delta = |\Omega|.
\end{equation}

%Note that if $\mathcal{W}_{\rm hf}$ is chosen according to \eqref{eq:boundary_conditions_mapping}
%$ \mathcal{C}_{\rm hf} $ has 

\subsection{Registration}

\subsubsection{Optimization-based registration}
\label{sec:optimization}

We first present the registration procedure for a single field. 
Given the set $ \mathcal{C}_{\rm hf} $ in \eqref{eq:calC_hf}, we introduce the template space
$\mathcal{T}_N = {\rm span} \{  \psi_n  \}_{n=1}^N \subset L^2(\Omega)$,
the $M$-dimensional space
$\mathcal{W}_M \subset \mathcal{W}_{\rm hf}$, and 
 the target snapshot $U^k = U_{\mu^k}$. Then, we propose to build $\boldsymbol{\Phi}^k = \texttt{id} +  \boldsymbol{\varphi}^k$ as the solution to
\begin{subequations} 
\label{eq:optimization_statement}
\begin{equation}
\label{eq:optimization_statement_a}
\min_{\psi \in   {\mathcal{T}}_N, \; 
\boldsymbol{\varphi} \in \mathcal{W}_M}
\,
\mathfrak{f} \left(\psi,  \texttt{id}  + \boldsymbol{\varphi} ,   \mu^k \right) \,  + \, 
\xi \big|   
\boldsymbol{\varphi}  \big|_{H^2(\Omega)}^2,  
\quad
{\rm s.t.} \;
\boldsymbol{\varphi} \in \mathcal{C}_{\rm hf}.
\end{equation}
The functional {$ \mathfrak{f} :  L^2(\Omega) \times [{\rm Lip}(\Omega)]^2 \times \mathcal{P} \to \mathbb{R}_+$  } --- which is here referred to as \emph{proximity measure} --- is given by
\begin{equation}
\label{eq:L2obj}
\mathfrak{f}\left(
\psi,  \boldsymbol{\Phi},  \mu \right)
:= \; \int_{\Omega } \;  
\left( \mathfrak{s} (U_{\mu}) \circ \boldsymbol{\Phi} -  
\psi    \right)^2 \, d \mathbf{X};
\end{equation} 
\end{subequations}
Here, $
\mathfrak{s} : \mathcal{X} \to 
L^2(\Omega)$ is a suitable \emph{registration sensor} that will be introduced below.
On the other hand, the $H^2$ seminorm is given by
$|  \mathbf{v}   |_{H^2(\Omega)}^2 
:= \sum_{i,j,k=1}^d \, \int_{\Omega} \, 
\left(   \widehat{\partial}_{i,j}^2 v_k  \,\right)^2 \, dX$ for all $\mathbf{v} \in 
H^2(\Omega; \mathbb{R}^d) $. 
The constraint 
$\boldsymbol{\varphi} \in \mathcal{C}_{\rm hf}$
in \eqref{eq:optimization_statement_a}, which was introduced in 
\eqref{eq:condition_equivalent_weak}, weakly enforces that 
$g \in [\epsilon, 1/\epsilon]$ and thus  that $\boldsymbol{\Phi}$ is a bijection from $\Omega$ into itself for all admissible solutions to \eqref{eq:optimization_statement_a}.

We observe that, compared to \cite{taddei2020registration}, we here optimize with respect to both displacement, $\boldsymbol{\varphi}$, and template $\psi$. 
Rather than minimizing  the distance from a template field $\bar{\psi}$ in the mapped configuration, we here minimize the best-fit error from a given linear space:
in our experience, the statement in \eqref{eq:optimization_statement_a} outperforms the one in \cite{taddei2020registration} for  fields
with several local extrema
 for which a one-dimensional template might not suffice.
 Note that the optimal solution 
$(\psi^k, \boldsymbol{\varphi}^k)$ to  \eqref{eq:optimization_statement} satisfies
$\psi^k = \Pi_{\mathcal{T}_N}  ( \mathfrak{s}(U_{\mu^k}) \circ     
\boldsymbol{\Phi}^k
)$: for moderate values of $N$, we empirically find that the cost of solving \eqref{eq:optimization_statement} is comparable to the cost of solving the statement in \cite{taddei2020registration}. 

Since $| v  |_{H^2(\Omega)}=0$ for all linear polynomials, we find that 
the penalty term measures deviations from linear maps; in particular, we find that mapping and displacement have the same $H^2$ seminorm, that is $\big| \texttt{id}  + \boldsymbol{\varphi}  \big|_{H^2(\Omega)}
=
\big|   \boldsymbol{\varphi}  \big|_{H^2(\Omega)}
$. 
Due to the condition
$\boldsymbol{\Phi}(\Omega) = \Omega$, we might further interpret the penalty as a measure of the deviations from the identity map.
The penalty in \eqref{eq:optimization_statement_a}  
can be further interpreted as a Tikhonov regularization, and 
has   the effect to control the gradient of the Jacobian determinant $g$
---
recalling the discussion in section \ref{sec:affine_mappings}, the latter is important to enforce  bijectivity. 
The hyper-parameter $\xi$ balances accuracy
 --- measured by $\mathfrak{f}$ --- and smoothness of the mapping. 
 If we write $\boldsymbol{\varphi} = \sum_{m=1}^{M_{\rm hf}} a_m  \boldsymbol{\varphi}_m^{\rm hf}$, we obtain that 
 $|  \boldsymbol{\Phi}   |_{H^2(\Omega)} =  \mathbf{a}^T \, \mathbf{A}^{\rm reg} \, \mathbf{a}$
with  $\mathbf{A}^{\rm reg} \in \mathbb{R}^{M_{\rm hf}, M_{\rm hf}}$
such that
$A_{m,m'}^{\rm reg} = ((\boldsymbol{\varphi}_{m'}^{\rm hf},      
\boldsymbol{\varphi}_{m}^{\rm hf} ))_{H^2(\Omega)}$ for $m,m'=1,\ldots, M_{\rm hf}$ 
---
$  ((\cdot,       \cdot ))_{H^2(\Omega)}$ is the bilinear form associated with 
$| \cdot |_{H^2(\Omega)}$.
Since $\boldsymbol{\varphi}_1^{\rm hf},\ldots, \boldsymbol{\varphi}_{M_{\rm hf}}^{\rm hf}$ are polynomials, computation of the entries of 
$\mathbf{A}^{\rm reg}$ is straightforward.
Finally, since the registration problem is non-convex in $\boldsymbol{\varphi}$, careful initialization of the iterative optimization algorithm is important: we refer to \cite[section 3.1.2]{taddei2020registration} for further details.

\begin{remark}
\label{remark:filtering}
\textbf{Choice of the registration sensor 
$\mathfrak{s}$.}
As for   shock capturing methods (e.g., \cite{persson2006sub}), 
the choice of the sensor 
$\mathfrak{s}$ is important to correctly capture relevant features associated with the solution field. In this work,  we consider $\mathfrak{s}(U) = U$ for the Burgers equation  and 
$\mathfrak{s}(U) = h$ for the shallow water equations:
{
a general strategy for the choice of the registration sensor is beyond the scope of the present paper.}
 For nearly discontinuous fields, we empirically found that filtering might improve the robustness of the registration procedure. In this work, we 
resort to a spatial moving average filter based on the Matlab routine \texttt{smooth}.
\end{remark}

\subsubsection{Parametric registration}
\label{sec:generalization}

In Algorithm \ref{alg:registration}, we propose a Greedy procedure to iteratively build a low-dimensional approximation space 
${\mathcal{W}}_M \subset {\mathcal{W}}_{\rm hf}$
for the displacement field and the template space ${\mathcal{T}}_N$. Here, the routine 
$$
[  \boldsymbol{\varphi}^{\star}, \psi^{\star}, \mathfrak{f}_{N,M}^{\star}   ]
=
\texttt{registration} \left(
U , {\mathcal{T}}_N, 
{\mathcal{W}}_M, C_{\rm exp}, \delta, \epsilon, \xi 
\right)
$$ 
takes as input 
{the target snapshot $U$, 
the template space ${\mathcal{T}}_N$,
the displacement space ${\mathcal{W}}_M$,   the constants 
$C_{\rm exp}, \delta, \epsilon>0$ (cf. \eqref{eq:condition_equivalent_weak}), and the weighting parameter  
$\xi>0$ (cf.  \eqref{eq:optimization_statement_a})},
and returns a local minimum $(\boldsymbol{\varphi}^{\star}, \psi^{\star})$ to \eqref{eq:optimization_statement_a} and the corresponding value of the proximity measure
$\mathfrak{f}_{N,M}^{\star} :=\|  \psi^{\star} \circ (\texttt{id} + \boldsymbol{\varphi}^{\star})  - U    \|_{L^2(\Omega)}^2$. On the other hand, the routine 
$$
[  \mathcal{W}_M,  \{  \mathbf{a}^k \}_{k=1}^{n_{\rm train}}
  ]  
=
\texttt{POD} 
\left( 
\{ \boldsymbol{\varphi}^{\star,k} \}_{k=1}^{n_{\rm train}}, 
tol_{\rm pod} ,
\; \;
(\cdot, \cdot)_{\star}
\right)
$$
takes as input the optimal displacement fields obtained by repeatedly solving \eqref{eq:optimization_statement_a} for different target snapshots,
the tolerance $tol_{\rm pod}  > 0$, and the inner product 
$(\cdot, \cdot)_{\star}$,
 and returns 
 the   POD space associated with the  
 first $M$   modes
$\mathcal{W}_M = {\rm span} \{ \boldsymbol{\varphi}_m \}_{m=1}^M$, 
where $M$ is chosen according to \eqref{eq:POD_cardinality_selection}, and
the vectors of coefficients
$ \{  \mathbf{a}^k \}_k$  such that
$\left( \mathbf{a}^k  \right)_m = 
( \boldsymbol{\varphi}_m,  \boldsymbol{\varphi}^{\star,k} )_{\star}$.
 As in \cite{taddei2020registration}, 
we consider the  inner product
\begin{equation}
\label{eq:mapping_inner_product}
(  \boldsymbol{\phi}',  \boldsymbol{\phi})_{\star}
=
\mathbf{a}' \cdot \mathbf{a},
\quad
{\rm for \; any} \;
\boldsymbol{\phi} , \boldsymbol{\phi}'
\;\;
{\rm s.t.} 
\;\;
\boldsymbol{\phi} = \sum_{m=1}^{M_{\rm hf}}
\left(  \mathbf{a} \right)_m 
\boldsymbol{\varphi}_m^{\rm hf},
 \;\;
 \boldsymbol{\phi}' = \sum_{m=1}^{M_{\rm hf}}
\left(  \mathbf{a}' \right)_m 
\boldsymbol{\varphi}_m^{\rm hf}.
\end{equation}

If $\mathcal{W}_{\rm hf} = \emptyset$, Algorithm \ref{alg:registration} reduces to the well-known strong-Greedy algorithm (see, e.g., \cite{binev2011convergence})
for the manifold
$\mathcal{M}_{\rm s} = \{
\mathfrak{s}(U_{\mu}): \mu \in \mathcal{P} \}$. In our experience, for moderate values of $N_{\rm max}$, the offline cost is dominated by the cost of performing the first iteration: POD indeed effectively leads to an approximation space $\mathcal{W}_M$ of size $M \ll M_{\rm hf}$ and ultimately simplifies the solution to the optimization problem for the subsequent iterations.

\begin{algorithm}[H]                      
\caption{Registration algorithm}     
\label{alg:registration}     

 \small
\begin{flushleft}
\emph{Inputs:}  $\{ (\mu^k,  U_{\mu^k}) \}_{k=1}^{n_{\rm train}} \subset  \mathcal{P} \times \mathcal{M}$ snapshot set, 
$\mathcal{T}_{N_0} = {\rm span} \{ \psi_n \}_{n=1}^{N_0}$ template space, 
$\mathfrak{s}: \mathcal{X} \to L^2(\Omega)$ registration sensor;
\smallskip

\emph{Hyper-parameters:}
$tol_{\rm pod}$ (cf. \eqref{eq:POD_cardinality_selection}),
$C_{\rm exp}, \delta, \epsilon$ (cf. \eqref{eq:condition_equivalent_weak}), 
$\xi$ (cf.  \eqref{eq:optimization_statement_a}), 
$N_{\rm max}$ maximum number of iterations, $\texttt{tol}$ tolerance for termination condition,
$(\cdot,\cdot)_{\star}$
mapping inner  product (cf. \eqref{eq:mapping_inner_product}).
\smallskip

\emph{Outputs:} 
${\mathcal{T}}_N = {\rm span} \{ \psi_n  \}_{n=1}^N$ template space, 
$\mathcal{W}_M = {\rm span} \{ \boldsymbol{\varphi}_m  \}_{m=1}^M$ displacement space,
$\{  \mathbf{a}^k  \}_k$ mapping coefficients.
\end{flushleft}                      

 \normalsize 

\begin{algorithmic}[1]
\State
Set
$\mathcal{T}_{N=N_0} = \mathcal{T}_{N_0}$,
$\mathcal{W}_M =\mathcal{W}_{\rm hf}$.
\vspace{3pt}

\For {$N=N_0, \ldots, N_{\rm max}-1$ }

\State
$
[  \boldsymbol{\varphi}^{\star,k}, \psi^{\star,k}, \mathfrak{f}_{N,M}^{\star,k}   ]
=
\texttt{registration} \left(
U^k ,  {\mathcal{T}}_N, 
{\mathcal{W}}_M, C_{\rm exp}, \delta, \epsilon , \xi 
\right)
$ for $k=1,\ldots,n_{\rm train}$.
\vspace{3pt}

\State
$[  \mathcal{W}_M , 
\; 
\{  \mathbf{a}^k  \}_k ]  =
\texttt{POD} \left( \{ \boldsymbol{\varphi}^{\star,k} \}_{k=1}^{n_{\rm train}}, 
tol_{\rm pod}  ,   (\cdot, \cdot)_{\star}\right)$
\vspace{3pt}

\If{   $\max_k   \mathfrak{f}_{N,M}^{\star,k}   < \texttt{tol}$}, \texttt{break}

\Else

\State
$\mathcal{T}_{N+1} = \mathcal{T}_{N} \cup {\rm span} \{ 
\mathfrak{s}\left( U_{\mu^{k^{\star}}}  \right) \circ  \boldsymbol{\Phi}^{\star,k^{\star}}  \}$ with 
$k^{\star} = {\rm arg} \max_k \mathfrak{f}_{N,M}^{\star,k}$.
\EndIf

\EndFor

\end{algorithmic}

\end{algorithm}

\begin{remark}
\label{remark:template_space}
\textbf{Choice of the initial templace space
$\mathcal{T}_{N=N_0}$.}
Algorithm \ref{alg:registration} 
requires the definition of the initial template space $\mathcal{T}_{N=N_0}$:
in this work, we use $
\mathcal{T}_{N_0=1} = {\rm span} \{ 
U_{\mu = \bar{\mu}}  \} $ for the Burgers equation, and we consider
$\mathcal{T}_{N_0=2} = {\rm span} \{ 
h_{\mu = \bar{\mu}},  h_{\rm bf}    \} $ for the shallow-water model problem, 
where 
$\bar{\mu}$ denotes the centroid of $\mathcal{P}$, and
$h_{\rm bf} = ( U_{\rm bf} )_1$ denotes the initial height.
For the Burgers equation, we empirically found that our Greedy procedure weakly depends  on the choice of  the initial template $\mathcal{T}_{N=1}$.
On the other hand, for the shallow-water problem, the use of a two-dimensional template is important  for accuracy:
{we can significantly reduce the impact  of the choice of the initial template space by not performing POD (cf. Step 4 Algorithm \ref{alg:registration}) during the first few iterations, at the price of (significantly) higher offline costs.
}
Nevertheless, if compared to  \cite{taddei2020registration}, our empirical findings suggest that the Greedy procedure significantly reduces the sensitivity of the algorithm  with respect to the initial choice of the template compared to the the approach, which represented an issue of the original proposal (cf. \cite[Fig. 7]{taddei2020registration}).
\end{remark}

\subsection{POD compression}
\label{sec:POD_compression}

 Given the mappings $\{ \boldsymbol{\Phi}^k = \texttt{id} +  \boldsymbol{\varphi}^k  \}_{k=1}^{n_{\rm train}}$, we define the mapped snapshots
 $\widetilde{U}^k := U^k \circ    \boldsymbol{\Phi}^k $ for $k=1,\ldots, n_{\rm train}$.  Then, we apply POD based on the $L^2$ inner product to generate the space $\mathcal{Z}_N = {\rm span} \{ \zeta_n \}_{n=1}^N$ and the coefficients $\{ \boldsymbol{\alpha}^k \}_{k=1}^{n_{\rm train}}$ such that
$(\boldsymbol{\alpha}^k)_n = (\zeta_n, \widetilde{U}^k)$ for $n=1,\ldots,N$ and $k=1,\ldots,n_{\rm train}$.
The dimension $N$  is chosen according to \eqref{eq:POD_cardinality_selection}.
Note that application of POD requires the definition of all mapped snapshots  on a parameter-independent spatio-temporal mesh.

  {Algorithm \ref{alg:repod} summarizes the computational procedure. 
 }  We   observe that, although our   data compression  procedure  is applied to a specific class of problems, hyperbolic conservation laws with parameter-dependent discontinuities,  our approach is general, that is, independent of the underlying mathematical model. 
 
 \begin{algorithm}[H]                      
\caption{ {Data compression (\texttt{RePOD}) } }     
\label{alg:repod}     

 \small
\begin{flushleft}
\emph{Inputs:}  
see  Algorithm \ref{alg:registration}
\smallskip

\emph{Hyper-parameters:}
$(\cdot, \cdot)$ solution inner product, 
and parameters of Algorithm \ref{alg:registration}
\smallskip

\emph{Outputs:} 
$\mathcal{Z}_N = {\rm span} \{ \zeta_n  \}_{n=1}^N$ reduced space, 
$\mathcal{W}_M = {\rm span} \{ \boldsymbol{\varphi}_m  \}_{m=1}^M$ displacement space,
$\{  \mathbf{a}^k  \}_k$ mapping coefficients,
$\{  \boldsymbol{\alpha}^k  \}_k$
solution coefficients.
\end{flushleft}                      

 \normalsize 

\begin{algorithmic}[1]
\State
{ Apply Algorithm \ref{alg:registration} to obtain
 $\mathcal{W}_M$ and $\{ \mathbf{a}^k  \}_k$}
\vspace{3pt}
 
\State
{Define the mapped snapshots 
$ \widetilde{U}^k := U^k \circ    \boldsymbol{\Phi}^k$ with  
$\boldsymbol{\Phi}^k = \texttt{id} +W_M\mathbf{a}^k$,
$k=1, \ldots, n_{\rm train}$; }
\vspace{3pt}

 \State
 {$[  \mathcal{Z}_N , 
\; 
\{  \boldsymbol{\alpha}^k  \}_k ]  =
\texttt{POD} \left( \{ \widetilde{U}^k  \}_k, \; 
tol_{\rm pod} , \; (\cdot, \cdot) \right)$
.}
\end{algorithmic}

\end{algorithm}

 \section{Projection-based  reduced-order model}
\label{sec:projection_based_ROM}
In section \ref{sec:registration}, we discussed how to generate the operators $Z_N, W_M$ associated with 
\eqref{eq:mapping_representation} based on the snapshots $\{  U^k = U_{\mu^k} \}_{k}$, and how to compute (quasi-)optimal values of  the solution/mapping coefficients  for all training points,  $ \{ \boldsymbol{\alpha}^k \}_k$,  $ \{ \mathbf{a}^k \}_k$. In this section, we address the problem of predicting solution and mapping coefficients for a new value of the parameter $\mu \in \mathcal{P}$: as anticipated in the introduction, we resort to a kernel-regression algorithm  to predict the mapping coefficients, while we resort to minimum residual projection to predict the   coefficients associated with the estimate $\widehat{U}$ of the mapped solution.
To clarify the presentation, we here focus on the main features of the formulation and we defer to the  appendix for a thorough discussion of several technical aspects.

\subsection{Non-intrusive construction of the mapping $\boldsymbol{\Phi}$}
\label{sec:non_intrusive_ROM}

Algorithm \ref{alg:registration} returns the space $\mathcal{W}_M = {\rm span} \{ \boldsymbol{\varphi}_m \}_{m=1}^M$ and the mapping coefficients  $\{ \mathbf{a}^k \}_{k=1}^{n_{\rm train}}$. 
As in \cite{taddei2020registration}, we apply a multi-target regression procedure based on radial basis function (RBF, \cite{wendland2004scattered}) approximation to compute predictors of the mapping coefficients for all $\mu \in \mathcal{P}$:
\begin{equation}
\label{eq:mapping_phi_RBF}
\widehat{\boldsymbol{\Phi}}_{\mu}
=\texttt{id} + \sum_{m=1}^M \; \left( \widehat{\mathbf{a}}_{\mu}   \right)_m \; \boldsymbol{\varphi}_m,
\quad
\widehat{\mathbf{a}} : \mathcal{P} \to \mathbb{R}^M.
\end{equation}
Each entry of $\widehat{\mathbf{a}} $ is built separately based on the datasets $\mathcal{D}_m = \{ (\mu^k,  
a_m^k) \}_{k=1}^{n_{\rm train}}$, $m=1,\ldots,M$,
$a_m^k := \left(   \mathbf{a}^k  \right)_m$. To assess the goodness of fit of the regression model, we compute an estimate of the out-of-sample R-squared (e.g., \cite[Chapter 14]{rice2006mathematical}) using cross-validation. We recall  that given training and test sets
$\{ (\mu^k,  a_m^{k}  )  \}_{k=1}^{n_{\rm train}}$ and
$\{ (\mu^j,  a_m^{j}  )  \}_{j=1}^{n_{\rm test}}$, the R-squared is defined as
\begin{equation}
\label{eq:Rsquared}
\texttt{R}_m^2 =  
1 -
\frac{
\sum_{j=1}^{n_{\rm test}}
\left(  a_m^{j} -   \left( \widehat{\mathbf{a}}_{\mu^j}   \right)_m   \right)^2}{
\sum_{j=1}^{n_{\rm test}}
\left(  a_m^{j} -   \bar{a}_m^{\rm train}
\right)^2
}
,
\quad
\bar{a}_m^{\rm train} = \frac{1}{n_{\rm train}} \sum_{k=1}^{n_{\rm train}} \,  a_m^k.
\end{equation}
To reduce the risk of over-fitting, we only keep coefficients for which 
 $\texttt{R}_m^2 \geq \texttt{R}_{\rm min}^2 = 0.75$.  We remark that the mapping in \eqref{eq:mapping_phi_RBF} is not guaranteed to be bijective  for all $\mu \in \mathcal{P}$, particularly for small-to-moderate values of $n_{\rm train}$: as discussed in \cite{taddei2020registration}, this represents a major issue of the proposed approach and is the motivation to consider intrusive methods to simultaneously learn mapping and solution coefficients.
 {The development of a fully intrusive ROM is beyond the scope of the present paper and is the subject of ongoing research.}

\subsection{Projection-based ROM for the solution coefficients }
\label{sec:pMOR_minres}
\subsubsection{Reduced-order statement: Galerkin projection; (approximate) minimum residual}

We introduce the matrix representation 
$\mathbf{Z}_N = [\boldsymbol{\zeta}_1,\ldots,\boldsymbol{\zeta}_N ]$ 
of the operator $Z_N: \mathbb{R}^N \to \mathcal{Z}_N$
associated with the DG FE basis $\{  \varphi_j \}_{j=1}^{N_{\rm hf}}$,
 such that
$\mathbf{Z}_N^T \mathbf{X}_{\rm hf} \mathbf{Z}_N = \mathbbm{1}_N$; we further introduce the dual residual operator
$\mathbf{R}_N^{\rm hf}: \mathbb{R}^N \times \mathcal{P} \to \mathbb{R}^{N_{\rm hf}}$ such that
\begin{subequations}
\begin{equation}
\label{eq:RN_algebraic}
 \left(   \mathbf{R}_N^{\rm hf}(\boldsymbol{\alpha},  \mu)   \right)_j 
 =
 R_{\Phi_{\mu}} (Z_N \boldsymbol{\alpha}, \varphi_j),
 \quad
 j=1,\ldots,N_{\rm hf},
 \end{equation} 
and the Jacobian
$\boldsymbol{\mathcal{J}}_{N}^{\rm hf}: \mathbb{R}^N \times \mathcal{P} \to \mathbb{R}^{N_{\rm hf}, N}$ such that
\begin{equation}
\label{eq:JacN_algebraic}
\boldsymbol{\mathcal{J}}_{N}^{\rm hf}(\boldsymbol{\alpha},  \mu) 
:=
\boldsymbol{\mathcal{J}}^{\rm hf}(Z_N\boldsymbol{\alpha},  \mu) \mathbf{Z}_N,
\end{equation} 
where $\boldsymbol{\mathcal{J}}^{\rm hf}(U,  \mu) \in \mathbb{R}^{N_{\rm hf}, N_{\rm hf}}$ is the high-fidelity Jacobian associated with a given field $U \in \mathcal{X}_{\rm hf}$ and the parameter $\mu$,
\begin{equation}
\left( \mathcal{J}^{\rm hf}(U,  \mu) \right)_{i,j}
:=
D  R_{\Phi_{\mu}}  [U] (\varphi_j, \varphi_i)
=
\lim_{\epsilon \to 0} \; \frac{   
 R_{\Phi_{\mu}} (U + \epsilon \varphi_j, \varphi_i) - 
 R_{\Phi_{\mu}} (U , \varphi_i)
}{\epsilon},
\quad
i,j=1,\ldots,N_{\rm hf},
\end{equation}
and $D  R_{\Phi_{\mu}}[ U] : \mathcal{X}_{\rm hf} \times   \mathcal{X}_{\rm hf}  \to \mathbb{R}$ is the Fréchet derivative of $R_{\Phi_{\mu}}$ at $U$.  We present below several projection-based statements: to clarify the approaches we report both the variational and the algebraic formulations.
 \end{subequations}

We have now the elements to introduce the Galerkin ROM: 
find $\widehat{U}_{\mu} = Z_N \widehat{\boldsymbol{\alpha}}_{\mu} \in \mathcal{Z}_N$ such that
\begin{equation}
\label{eq:galerkinROM}
R_{\Phi_{\mu}}( \widehat{U}_{\mu}, \zeta  ) = 0 \quad
\forall \; \zeta \in \mathcal{Z}_N  \quad \Leftrightarrow  \quad 
\mathbf{Z}_N^T  \; \mathbf{R}_N^{\rm hf} \left( \widehat{\boldsymbol{\alpha}}_{\mu},  \mu \right)
=
\mathbf{0}.
\end{equation}  
 Similarly, we introduce the minimum residual ROM:
 \begin{equation}
\label{eq:minresROM}
\widehat{U}_{\mu} \in {\rm arg} \min_{U \in \mathcal{Z}_N }
\sup_{\eta \in \mathcal{Y}_{\rm hf}}
\frac{R_{\Phi_{\mu}}^{\rm hf}(U, \eta) }{ \vertiii{\eta} }
  \quad \Leftrightarrow  \quad 
  \widehat{\boldsymbol{\alpha}}_{\mu} \in
   {\rm arg} \min_{  \boldsymbol{\alpha}  \in \mathbb{R}^N } 
   \;
   \|  \mathbf{R}_N^{\rm hf} \left(  \boldsymbol{\alpha},  \mu \right)  \|_{ 
   \mathbf{Y}_{\rm hf}^{-1} },
\end{equation}  
 where   $\|  \mathbf{w}  \|_{\mathbf{Y}_{\rm hf}^{-1} }^2 = \mathbf{w}^T \mathbf{Y}_{\rm hf}^{-1} \mathbf{w}$. In the numerical results, we demonstrate the superiority of the minimum residual ROM \eqref{eq:minresROM}  compared to the Galerkin ROM \eqref{eq:galerkinROM}.

 In order to devise an online-efficient ROM, we first introduce the approximate    minimum residual statement:
\begin{subequations}
\label{eq:approx_minresROM_temp}
\begin{equation}
\widehat{U}_{\mu} = 
Z_N \widehat{\boldsymbol{\alpha}}_{\mu}
\in {\rm arg} \min_{U \in \mathcal{Z}_N }
\sup_{\eta \in \mathcal{Y}_{J}}
\frac{R_{\Phi_{\mu}} (U, \eta) }{ \vertiii{\eta} },
\end{equation}
where 
  $\mathcal{Y}_{J} = {\rm span} \{  \eta_j \}_{j=1}^J \subset \mathcal{Y}_{\rm hf}$ is referred to as \emph{empirical test space}. If $(\eta_j, \eta_i)_{ \mathcal{Y}_{\rm hf}   } = \delta_{i,j}$, it is possible to verify that the solution coefficients
  $\widehat{\boldsymbol{\alpha}}_{\mu}$ satisfy
\begin{equation}
 \widehat{\boldsymbol{\alpha}}_{\mu} \in
   {\rm arg} \min_{  \boldsymbol{\alpha}  \in \mathbb{R}^N } 
   \;
   \|  \mathbf{R}_{N,J}^{\rm hf} \left(  \boldsymbol{\alpha},  \mu \right)  \|_2,
   \quad
   {\rm with}
   \;
   \left(
   \mathbf{R}_{N,J}^{\rm hf} \left(  \boldsymbol{\alpha},  \mu \right)
      \right)_j
  =
  R_{\Phi_{\mu}} (Z_N\boldsymbol{\alpha} , \eta_j ),
  \;\;
  j=1,\ldots,J.
\end{equation}
\end{subequations}
  Then, following \cite{yano2019discontinuous}, we replace the truth residual 
in \eqref{eq:approx_minresROM_temp}  
  with the EQ residual 
\begin{subequations}
\label{eq:approx_minresROM}
\begin{equation}
R_{\Phi_{\mu}}^{\rm eq}(\zeta, \eta)
=
\sum_{k \in \mathcal{I}_{\rm eq}} \rho_k^{\rm eq} \;
\left(
r_k^{\rm c, \mu}(\zeta, \eta) + r_k^{\rm d}(\zeta, \eta)
\right)
\end{equation}
where 
$ \mathcal{I}_{\rm eq} \subset \{ 1,\ldots,N_{\rm e} \}$ is a subset of the mesh elements and
$\rho_1, \ldots, \rho_{N_{\rm e}} \geq 0$ are a set of non-negative weights. In conclusion, we obtain the hyper-reduced approximate minimum residual ROM:
\begin{equation}
\widehat{U}_{\mu} \in {\rm arg} \min_{U \in \mathcal{Z}_N }
\sup_{\eta \in \mathcal{Y}_{J}}
\frac{R_{\Phi_{\mu}}^{\rm eq} (U, \eta) }{ \vertiii{\eta} } 
\quad \Leftrightarrow  \quad 
 \widehat{\boldsymbol{\alpha}}_{\mu} \in
   {\rm arg} \min_{  \boldsymbol{\alpha}  \in \mathbb{R}^N } 
   \;
   \|  \mathbf{R}_{N,J}^{\rm eq} \left(  \boldsymbol{\alpha},  \mu \right)  \|_2,
  \;\;
  j=1,\ldots,J.
\end{equation}
with 
$\mathbf{R}_{N,J}^{\rm eq}: \mathbb{R}^N \times \mathcal{P} \to \mathbb{R}^J$, 
$\left(
   \mathbf{R}_{N,J}^{\rm eq} \left(  \boldsymbol{\alpha},  \mu \right)
      \right)_j
  =
  R_{\Phi_{\mu}}^{\rm eq} (Z_N\boldsymbol{\alpha} , \eta_j )$.
 In the next two sections, we  discuss how to construct the test space $\mathcal{Y}_{J}$ and how to compute the empirical quadrature rule.
\end{subequations}
  
  \begin{remark}
  \label{remark:online_efficiency}
  \textbf{Online efficiency.}
  Computation of $\mathbf{R}_{N,J}^{\rm eq}$
  and its Jacobian $\mathbf{J}_{N,J}^{\rm eq}$ can 
  be performed efficiently, provided that $| \mathcal{I}_{\rm eq}| \ll N_{\rm e}$;
furthermore, since \eqref{eq:approx_minresROM} is a nonlinear least-squares problem, we can resort to the Gauss-Newton method  to efficiently compute the solution.
More in detail, computation of 
$\{ r_k^{\rm c, \mu}(Z_N \boldsymbol{\alpha}, \eta_j)
+r_k^{\rm d, \mu}(Z_N \boldsymbol{\alpha}, \eta_j)
  \}_j$ for a given $k \in \mathcal{I}_{\rm eq}$ requires the storage of 
  $\zeta_1,\ldots,\zeta_N, \eta_1,\ldots,\eta_J$ in the $k$-th element and in its neighbors. Note    
  that if $\mathcal{Z}_N, \mathcal{Y}_J \subset C(\Omega)$, computation of 
  $r_k^{\rm c, \mu}, r_k^{\rm d, \mu}$ only depends on the value of trial and test functions in the 
  $k$-th element (see Appendix \ref{sec:implementation}): continuous approximations of trial and test spaces thus allow quite significant reductions in online memory and computational costs.
 In the numerical results, we investigate the accuracy of  continuous approximations for the two model problems. 
The continuous  trial space
$\mathcal{Z}_N$ is obtained by applying  POD to  continuous approximations of the mapped snapshots $\{ \widetilde{U}^k \}_k$:
in our implementation, the continuous approximation is computed by simply averaging over facets. 
 \end{remark}

\subsubsection{Construction of the empirical test space}
It is possible to verify that the solution $\widehat{U}_{\mu}$ to \eqref{eq:minresROM} satisfies
\begin{equation}
\label{eq:optimal_test_space}
R_{\Phi_{\mu}} (\widehat{U}_{\mu}, \eta) = 0
\;\;
\forall \, \eta \in \mathcal{Y}_{N,\mu}^{\rm opt}
=
{\rm span} \{   
 \texttt{R}_{\mathcal{Y}_{\rm hf}} \left(
 D  R_{\Phi_{\mu}}  [\widehat{U}_{\mu}] (\zeta_n, \cdot)
 \right)
 \}_{n=1}^N.
\end{equation}
Note that the algebraic representation $\mathbf{Y}_{N,\mu}^{\rm opt}$ of the  space $\mathcal{Y}_{N,\mu}^{\rm opt}$ satisfies
$
\mathbf{Y}_{N,\mu}^{\rm opt} = 
\mathbf{Y}_{\rm hf}^{-1}
\boldsymbol{\mathcal{J}}^{\rm hf}(\widehat{U}_{\mu},  \mu) \mathbf{Z}_N$.
For this reason, we propose to choose the test space $\mathcal{Y}_J$ to approximate the manifold
$\mathcal{M}_{\rm test,N} = \bigcup_{\mu \in \mathcal{P}} \;
\mathcal{Y}_{N,\mu}^{\rm opt}$.

In Appendix \ref{sec:AMR_linear_theory}, we rigorously justify our choice by  presenting a detailed analysis for the linear case; in Algorithm \ref{alg:test_space}, we summarize the computational procedure 
for the construction of the test space employed in our code. We remark that problem-adapted test spaces have been  first considered in 
\cite{dahmen2014double} for linear problems, and more recently in  \cite{collins2020petrov}: a thorough comparison with 
\cite{collins2020petrov,dahmen2014double} is beyond the scope of the present paper.
  
\begin{algorithm}[H]                      
\caption{Construction of the empirical test space}     
\label{alg:test_space}     

 \small
\begin{flushleft}
\emph{Inputs:}  $\{ (\mu^k,  U_{\mu^k}) \}_{k=1}^{n_{\rm train}} \subset  \mathcal{P} \times \mathcal{M}$ snapshot set, 
$\mathcal{Z}_{N} = {\rm span} \{ \zeta_n \}_{n=1}^{N}$ trial  space;
\smallskip

\emph{Hyper-parameters:}
$tol_{\rm pod}$ (cf. \eqref{eq:POD_cardinality_selection}).
\smallskip

\emph{Outputs:} 
${\mathcal{Y}}_J = {\rm span} \{ \eta_j  \}_{j=1}^J$ test space.
\end{flushleft}                      

 \normalsize 

\begin{algorithmic}[1]

\For {$k=1, \ldots, n_{\rm train}$, $n=1,\ldots,N$ }

\State 
Compute
$\eta_{k,n} =  \texttt{R}_{\mathcal{Y}_{\rm hf}} \left(
D  R_{\Phi_{\mu^k}}  [ {U}_{\mu^k}] (\zeta_n, \cdot)
\right)$
 
 \EndFor
\vspace{3pt}

\State 
$  \mathcal{Y}_J  =
\texttt{POD} \left( \{ \eta_{k,n} \}_{k,n}, tol_{\rm pod}, (\cdot, \cdot)_{\mathcal{Y}_{\rm hf}}  \right)$

\end{algorithmic}

\end{algorithm}

\begin{remark}
\label{remark:continuous_test_space}
\textbf{Continuous approximation.}
As discussed in Remark \ref{remark:online_efficiency}, for computational reasons, it might be convenient to consider a continuous test space $\mathcal{Y}_{J}$.
This can be achieved by performing POD over continuous approximations of the test functions   $\{ \eta_{k,n} \}_{k,n}$.
As for the trial space, the continuous approximation is computed by simply averaging over facets.
\end{remark}

\subsubsection{Construction of the empirical quadrature rule}
 
 We denote by $\boldsymbol{\rho}^{\rm eq} \in \mathbb{R}_+^{N_{\rm e}}$ a vector of positive weights associated with \eqref{eq:approx_minresROM} and we denote by $\mathcal{I}_{\rm eq}$ the set of indices $k \in \{ 1,\ldots, N_{\rm e} \}$ such that $\rho_k^{\rm eq} > 0$.
 We seek $\boldsymbol{\rho}^{\rm eq} \in \mathbb{R}_+^{N_{\rm e}}$ such that
 \begin{enumerate}
 \item
 the number of nonzero entries in 
 $\boldsymbol{\rho}^{\rm eq}$ is as small as possible;
 \item
 the constant function is approximated correctly, 
 \begin{equation}
 \label{eq:constant_function_constraint}
\Big|
\sum_{k=1}^{N_{\rm e}} \rho_k^{\rm eq} | \texttt{D}^k  |
\,-\,
| \Omega | 
 \Big|
 \ll 1; 
 \end{equation}
\item
for all $\mu \in \mathcal{P}_{\rm train} = \{  \mu^k \}_{k=1}^{n_{\rm train}}$, the  empirical residual satisfies
  \begin{equation}
 \label{eq:accuracy_constraint}
\Big \|
\mathbf{J}_{N,J}^{\rm hf} 
\left(  \boldsymbol{\alpha}_{\mu}^{\rm train}  ,
\mu \right)^T
\left(  
\mathbf{R}_{N,J}^{\rm eq} \left(
\boldsymbol{\alpha}_{\mu}^{\rm train}, \mu
\right)
\,-\,
\mathbf{R}_{N,J}^{\rm hf} \left(
\boldsymbol{\alpha}_{\mu}^{\rm train}, \mu
\right)
\right)
 \Big \|_2
 \ll 1.
 \end{equation}
  Here, $ \boldsymbol{\alpha}_{\mu}^{\rm train} $ is chosen equal to the projection, that is 
 $Z_N \boldsymbol{\alpha}_{\mu}^{\rm train}= 
\Pi_{\mathcal{Z}_N} 
\left(
U_{\mu} \circ \boldsymbol{\Phi}_{\mu}
\right)
$.
  \end{enumerate}
The constant function constraint   --- which was   considered in \cite{yano2019discontinuous} --- is empirically found to improve the accuracy of the EQ procedure when the integral is close to zero due to the cancellation of the integrand in different parts of the domain. The accuracy  constraint in \eqref{eq:accuracy_constraint}  is an adaptation of the
\emph{manifold accuracy} constraints in \cite{yano2019discontinuous} to minimum residual ROMs and is motivated by the error analysis in 
 Appendix \ref{sec:BRR_EQ}. 
As shown in \cite{yano2019discontinuous}, the 
hyper-reduced system inherits the stability of the
DG discretization: 
(i) energy stability for linear hyperbolic systems, 
(ii) symmetry and non-negativity for
steady linear diffusion systems, and hence (iii) energy stability for linear convection-diffusion systems.

It is easy to verify that \eqref{eq:constant_function_constraint} and \eqref{eq:accuracy_constraint} could be rewritten in matrix form as
$$
\begin{array}{l}
\displaystyle{
\Big|
\sum_{k=1}^{N_{\rm e}} \rho_k^{\rm eq} | \texttt{D}^k  |
\,-\,
| \Omega | 
\Big|
=
|  \mathbf{G}_{\rm const}  \boldsymbol{\rho}^{\rm eq}  - | \Omega |  
\Big|
 \ll 1,
}
\\[3mm]
\displaystyle{
\Big \|
\mathbf{J}_{N,J}^{\rm hf} 
\left(  \boldsymbol{\alpha}_{\mu}^{\rm train}  ,
\mu \right)^T
\left(  
\mathbf{R}_{N,J}^{\rm eq} \left(
\boldsymbol{\alpha}_{\mu}^{\rm train}, \mu
\right)
\,-\,
\mathbf{R}_{N,J}^{\rm hf} \left(
\boldsymbol{\alpha}_{\mu}^{\rm train}, \mu
\right)
\right)
 \Big \|_2
 =
 \Big \|
 \mathbf{G}_{\mu}  \boldsymbol{\rho}^{\rm eq}  -  \mathbf{b}_{\mu}
 \Big\|_2
  \ll 1,
}
\\
\end{array}
$$
where $\mathbf{G}_{\rm const} = [ | \texttt{D}_1   |, \ldots,  | \texttt{D}_{N_{\rm e}}   |   ]$,
and
$ \mathbf{G}_{\mu}  \in \mathbb{R}^{N, N_{\rm e}},  \mathbf{b}_{\mu}  \in \mathbb{R}^N$ are  a suitable matrix and vector, whose explicit expressions can be derived exploiting the same argument as in \cite{farhat2015structure,taddei2018offline,yano2019discontinuous}.
The problem of finding   $\boldsymbol{\rho}^{\rm eq}$  can thus be reformulated as a sparse-representation (or best-subset selection) problem:
\begin{equation}
\label{eq:sparse_representation}
\min_{  \boldsymbol{\rho} \in \mathbb{R}^{N_{\rm e}} }
\;
\| \boldsymbol{\rho}   \|_0,
\quad
{\rm s.t} \quad
\left\{
\begin{array}{l}
\|\mathbf{G} \boldsymbol{\rho} - \mathbf{b}  \|_{\star} \leq \delta; \\[3mm]
\boldsymbol{\rho} \geq \mathbf{0}; \\
\end{array}
\right.
{\rm where} \; \;
\mathbf{G}  = \left[
\begin{array}{l}
\mathbf{G}_{\rm const}   \\[3mm]
\mathbf{G}_{\mu^1}  \\
\vdots \\
\mathbf{G}_{\mu^{n_{\rm train}}}  \\
\end{array}
\right]
\;\;
\mathbf{b}  = \left[
\begin{array}{l}
|\Omega|  \\[3mm]
\mathbf{b}_{\mu^1}  \\
\vdots \\
\mathbf{b}_{\mu^{n_{\rm train}}}  \\
\end{array}
\right]
\end{equation}
for   suitable choices of the   vector norm 
$\|\cdot   \|_{\star} $, and the tolerance $\delta>0$.
Here, $\| \cdot   \|_0$ is the $\ell^0$-norm, which counts the number of nonzero entries.  
In this work, we resort  to the nonnegative linear least-squares method considered in 
\cite{farhat2015structure} to obtain approximate solutions to \eqref{eq:sparse_representation}: more in detail, we rely on 
the Matlab function \texttt{lsqnonneg} which implements a variant of the Lawson and Hanson active set iterative algorithm \cite{lawson1974solving}.
Note that Yano in \cite{yano2019discontinuous} relies on $\ell^1$ relaxation 
to obtain approximate solutions to \eqref{eq:sparse_representation}: a thorough comparison between the two methods is beyond the scope of this work.

\subsection{Summary of the offline/online computational procedure}
\label{sec:summary}

We conclude this section by summarizing the offline/online computational decomposition.
During the offline stage, given the snapshots $\{ U^k =U_{\mu^k}  \}_k$, we proceed as follows:
\begin{enumerate}
\item
we apply 
\texttt{RePOD} to obtain the low-dimensional operators $Z_N, W_M$ and the training coefficients 
$\{ \boldsymbol{\alpha}^k \}_k \subset \mathbb{R}^N$ and 
$\{ \mathbf{a}^k \}_k \subset \mathbb{R}^M$ (cf. section \ref{sec:registration});
\item
we apply kernel regression to obtain the predictor
$\widehat{\mathbf{a}}: \mathcal{P} \to \mathbb{R}^M$ 
for the mapping coefficients
(cf. section \ref{sec:non_intrusive_ROM}) {and 
the predictor 
$\widehat{\boldsymbol{\alpha}}^{(0)}:\mathcal{P} \to \mathbb{R}^N$
for the solution coefficients};
\item
we perform hyper-reduction and we build the ROM for the solution coefficients
(cf. section \ref{sec:pMOR_minres}).
\end{enumerate}
{The RBF estimate of the solution coefficients is used during the online stage as initial guess 
$\widehat{\boldsymbol{\alpha}}_{\mu}^{(0)}$ for the Gauss-Newton method to solve \eqref{eq:approx_minresROM}.
}
{The output of the offline stage  includes
(i) the RBF data structure needed to estimate the mapping and solution  coefficients during the online stage;
(ii) the data structures for the evaluation of the dual residual
in \eqref{eq:approx_minresROM}; 
(iii) the space-time mesh $\mathcal{T}_{\rm hf}$, 
and
(iv) the DG representation of the  reduced operator $Z_N$, 
$\mathbf{Z}_{N} \in \mathbb{R}^{D \cdot N_{\rm hf}, N}$, and of the displacement operator
$W_M$,
$\mathbf{W}_{M} \in \mathbb{R}^{2 \cdot N_{\rm hf}, M}$, for visualization.
}

{Algorithm \ref{alg:online} summarizes the online stage.
The algorithm returns a FE vector 
$\widehat{\mathbf{U}}_{\mu} \in \mathbb{R}^{D \cdot N_{\rm hf}}$ and the deformed mesh
$\boldsymbol{\Phi}_{\mu}(\mathcal{T}_{\rm hf})$: if we interpret $\widehat{\mathbf{U}}_{\mu}$ as an element of the FE space $\mathcal{X}_{\rm hf}$,
$\widehat{U}_{\mu}$, we obtain an estimate of the mapped field $\widetilde{U}_{\mu}$; on the other hand, 
if we interpret $\widehat{\mathbf{U}}_{\mu}$ as an element of the mapped FE space $\mathcal{X}_{\rm hf, \Phi_{\mu}}$ (cf. \eqref{eq:mappedFE_space}),
$\widehat{U}_{\mu}^{\star}$, we obtain an estimate of 
${U}_{\mu}$.
Note that the $\widehat{\mathbf{U}}_{\mu}$ and the deformed mesh can be computed through a simple matrix vector multiplication:
\begin{equation}
\label{eq:definition_solution_reduced}
\widehat{\mathbf{U}}_{\mu} =
\mathbf{Z}_N \widehat{\boldsymbol{\alpha}}_{\mu},
\quad
\left\{ 
\boldsymbol{\Phi} \left(  \mathbf{x}_{i,k}^{\rm hf} \right)
=
 \mathbf{x}_{i,k}^{\rm hf}  +
[\left(  \mathbf{W}_M   \widehat{\mathbf{a}}_{\mu} \right)_{j_{i,k,1}}, \left(  \mathbf{W}_M   \widehat{\mathbf{a}}_{\mu} \right)_{j_{i,k,2}}]: \,  
  i=1,\ldots,n_{\rm lp}, k=1,\ldots, N_{\rm e} \right\},
\end{equation}
with $j_{i,k,d} = i + (k-1) n_{\rm lp} + (d-1) N_{\rm hf}$.
The first two steps of the procedure are independent of the dimension of the hf space; the third step is required for visualization and is in practice very fast.
}

\begin{algorithm}[H]                      
\caption{ {Online evaluation of the ROM} }     
\label{alg:online}     

 \small
\begin{flushleft}
\emph{Inputs:}  
$\mu \in \mathcal{P}$, parameter value, ROM structure (output of Offline stage),
 $\mathcal{T}_{\rm hf}$ space-time mesh.
\smallskip

\emph{Outputs:} 
 $(
\widehat{\boldsymbol{\alpha}}_{\mu}, 
\widehat{\mathbf{a}}_{\mu})$ prediction of solution and mapping coefficients;
$\widehat{\mathbf{U}}_{\mu}$  DG estimate of  the solution field,
$\boldsymbol{\Phi}_{\mu}(\mathcal{T}_{\rm hf})$ deformed mesh.
\end{flushleft}                      

 \normalsize 

\begin{algorithmic}[1]
\State
{Apply Algorithm \ref{alg:registration} to obtain
 $\mathcal{W}_M$ and $\{ \mathbf{a}^k  \}_k$}
\vspace{3pt}
 
\State
{Evaluate  RBF predictors of solution and mapping coefficients, 
$\widehat{\boldsymbol{\alpha}}_{\mu}^{(0)}$,
$\widehat{\mathbf{a}}_{\mu}$
};
\vspace{3pt}

 \State
 {Solve \eqref{eq:approx_minresROM} using Gauss-Newton method with initial guess $\widehat{\boldsymbol{\alpha}}_{\mu}^{(0)}$.
}
\vspace{3pt}

 \State
 {Compute
$\widehat{\mathbf{U}}_{\mu}$   and 
$\boldsymbol{\Phi}_{\mu}(\mathcal{T}_{\rm hf})$ using \eqref{eq:definition_solution_reduced}
.} 
 
\end{algorithmic}

\end{algorithm}

\section{Numerical results}
\label{sec:numerics}
We illustrate here the numerical performance of the proposed approach for the two model problems introduced at the end of section \ref{sec:formulation}. 
In Appendix \ref{sec:data_compression_vis}, we present further investigations of the data compression approach. {Numerical results are performed in \texttt{Matlab 2019b} on a commodity laptop.}

\subsection{Burgers equation}
\label{sec:burgers}

In Figure \ref{fig:spacetimereg_burgers1}, we illustrate performance of space-time registration. Here, the mapping is generated based on $n_{\rm train}=200$ snapshots through Algorithm \ref{alg:registration} with  $N_{\rm max}=3$, $\xi=10^{-4}$, $M_{\rm hf}=128$, $tol_{\rm pod}=10^{-4}$.  The resulting map consists of a four-term expansion ($M=4$).
In Figure  \ref{fig:spacetimereg_burgers1}(a), we show 
the behavior of normalized POD eigenvalues associated with the unregistered and registered  space-time snapshots, while 
Figure  \ref{fig:spacetimereg_burgers1}(b) shows the 
out-of-sample maximum relative projection error with and without registration 
$E^{\rm bf,\infty} =
\max_{j=1,\ldots,n_{\rm test}}
\; E^{\rm bf}(\mu^j),$ where $\mu^1,\ldots,\mu^{n_{\rm test}} \overset{\rm iid}{\sim} {\rm Uniform}(\mathcal{P})$ 
(cf. \eqref{eq:best_fit_error}) {with $n_{\rm test}=20$}.
Note that for $N \geq 4$, projection error is less than $10^{-2}$ and is comparable with the discretization error. 
 We conclude that the space-time registration procedure dramatically improves the linear reducibility of the space-time solution manifold.  

%\label{fig:spacetimereg_burgers1}
\begin{figure}[h!]
\centering
 \subfloat[ ] 
{  \includegraphics[width=0.4\textwidth]
 {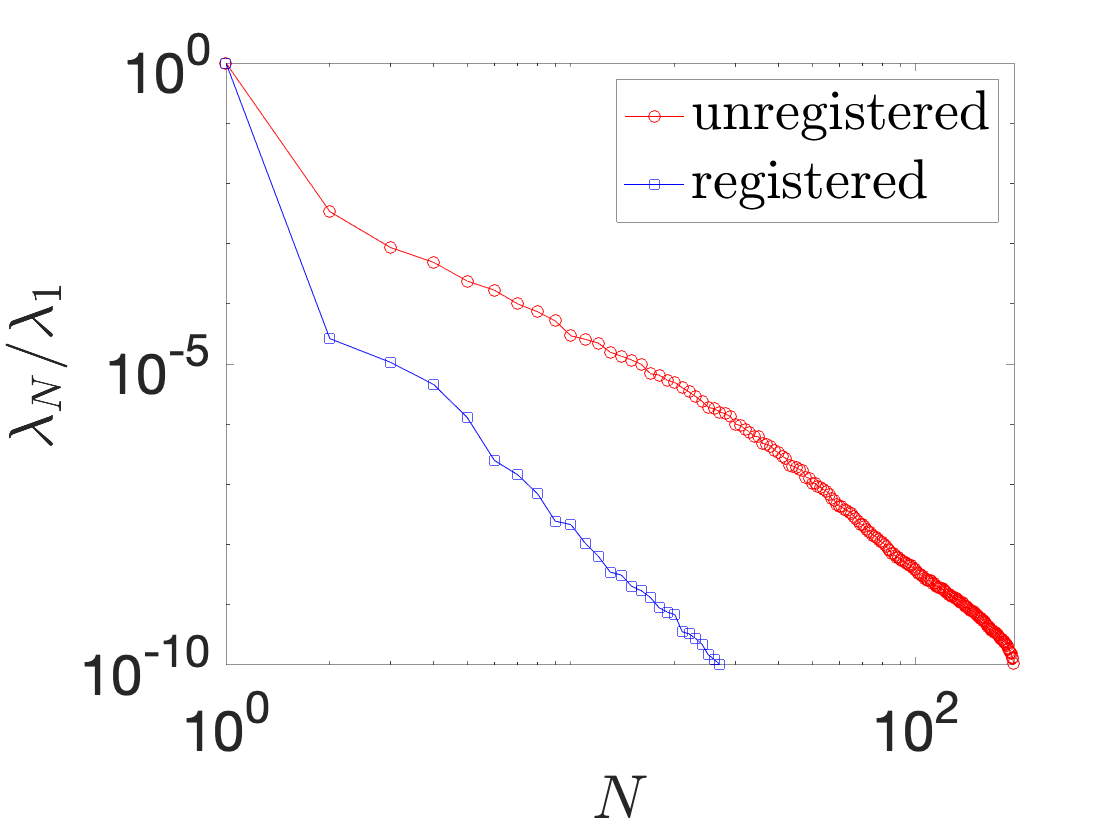}}
  ~~
 \subfloat[ ] 
{  \includegraphics[width=0.4\textwidth]
 {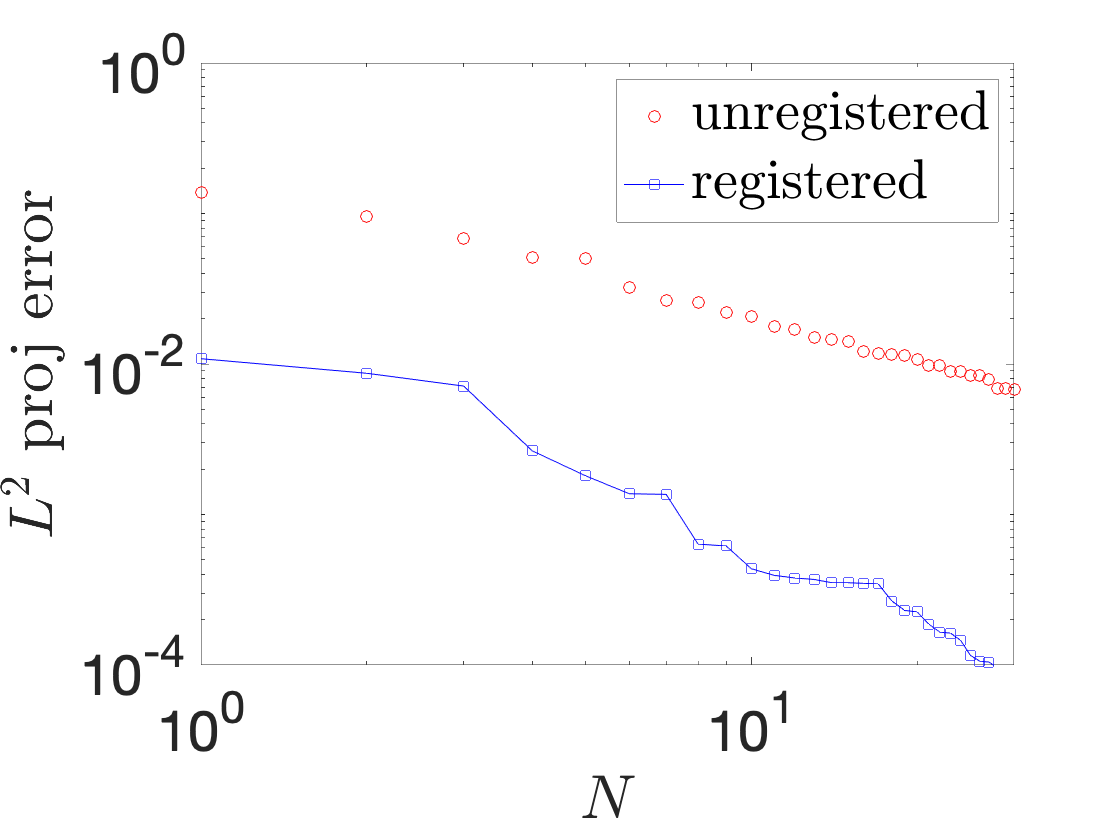}}
  
 \caption{Burgers equation;  space-time registration.
 (a) behavior of normalized POD eigenvalues associated with the unregistered and registered  space-time snapshots.
 (b)  behavior of the out-of-sample maximum relative projection error
(cf. \eqref{eq:best_fit_error}) 
  with and without registration.
}
 \label{fig:spacetimereg_burgers1}
  \end{figure}  

In Figure \ref{fig:burgers_projection}, we assess performance of
the Galerkin ROM \eqref{eq:galerkinROM}, 
the minimum residual ROM \eqref{eq:minresROM}, and
the approximate minimum residual ROM \eqref{eq:approx_minresROM_temp} for three different choices of the  size $J$ of the  test space $\mathcal{Y}_J$ for each value of $N$; in all cases, we do not perform hyper-reduction.
In Figure \ref{fig:burgers_projection}(a), we consider continuous trial and test spaces
(cf. Remarks \ref{remark:online_efficiency} and \ref{remark:continuous_test_space}), while in Figure \ref{fig:burgers_projection}(b), we consider discontinuous trial and test spaces.
On the y-axis, we here report the average relative out-of-sample  $L^2$ error in the reference configuration:
\begin{equation}
\label{eq:reference_config}
E_{\rm avg}^{\rm hf} =
\frac{1}{n_{\rm test}}
\;\;
\sum_{j=1}^{n_{\rm test}}
\;\;
\frac{
\| \widetilde{U}_{\mu^j} - \widehat{U}_{\mu^j}^{\rm hf}  \|
}{\| \widetilde{U}_{\mu^j}  \|},
\end{equation}
where the superscript $^{\rm hf}$ emphasizes the fact  that  the ROM relies on the hf quadrature rule (hf ROM). 
We observe that   minimum residual projection is superior to Galerkin projection in terms of performance; 
we also observe that the continuous approximation introduces an additional error that is negligible for $N \leq 5$.
Furthermore, we observe that approximate minimum residual approaches minimum residual for discontinuous test spaces: this is expected for sufficiently large values of $n_{\rm train}$ and $J$. For continuous approximations, we observe that minimum residual leads to slightly worse results than the approximate formulation: the difference is, however, modest for all values of $N$ considered.

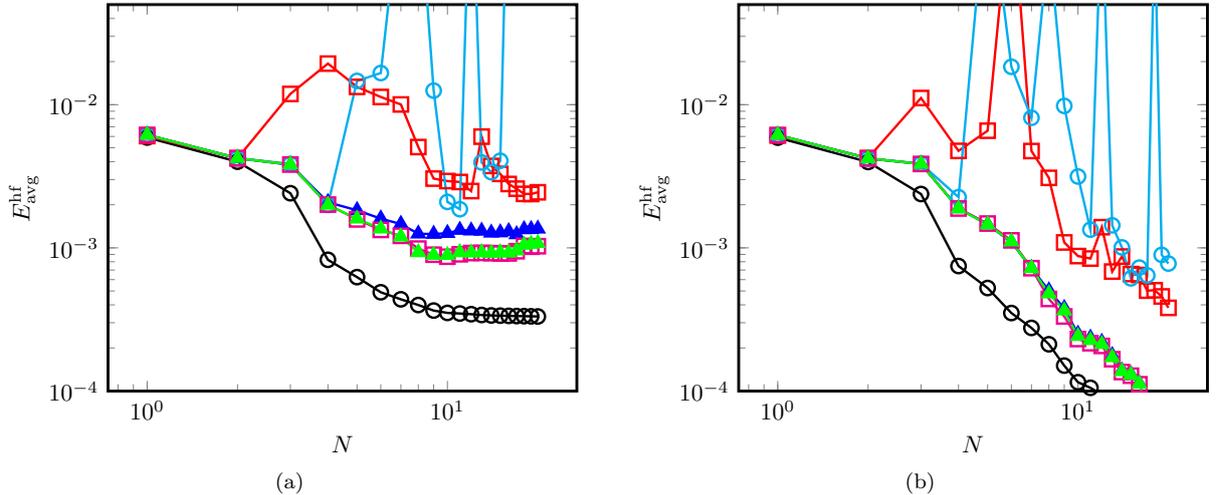
\begin{figure}[H]
\centering
\subfloat[ ] {
\begin{tikzpicture}[scale=0.9]
\begin{loglogaxis}[
xlabel = {$N$},
  ylabel = {$E_{\rm avg}^{\rm hf}$ },
  line width=1.2pt,
  mark size=3.0pt,
  ymin=0.0001,   ymax=0.05,
  ]
 
   \addplot[line width=1.pt,color=black,mark=o] table {data_burgers_CG_proj.dat};\label{p1}
  \addplot[line width=1.pt,color=red,mark=square]  table {data_burgers_CG_gal.dat};\label{p2}
  \addplot[line width=1.pt,color=blue,mark=triangle*] table {data_burgers_CG_minres.dat};\label{p3}
  
    \addplot[line width=1.pt,color=cyan,mark=o] table {data_burgers_CG_ES1.dat};\label{p4}
  \addplot[line width=1.pt,color=magenta,mark=square]  table {data_burgers_CG_ES2.dat};\label{p5}
  \addplot[line width=1.pt,color=green,mark=triangle*] table {data_burgers_CG_ES3.dat};\label{p6}  
\end{loglogaxis}

\end{tikzpicture}
}
~~~
\subfloat[ ] {
\begin{tikzpicture}[scale=0.9]
\begin{loglogaxis}[
xlabel = {$N$},
  ylabel = {$E_{\rm avg}^{\rm hf}$ },
  line width=1.2pt,
  mark size=3.0pt,
  ymin=0.0001,   ymax=0.05,
  ]
 
   \addplot[line width=1.pt,color=black,mark=o] table {data_burgers_DG_proj.dat}; %\label{p1}
  \addplot[line width=1.pt,color=red,mark=square]  table {data_burgers_DG_gal.dat};%\label{p2}
  \addplot[line width=1.pt,color=blue,mark=triangle*] table {data_burgers_DG_minres.dat};%\label{p3}
  
    \addplot[line width=1.pt,color=cyan,mark=o] table {data_burgers_DG_ES1.dat};%\label{p4}
  \addplot[line width=1.pt,color=magenta,mark=square]  table {data_burgers_DG_ES2.dat};%\label{p5}
  \addplot[line width=1.pt,color=green,mark=triangle*] table {data_burgers_DG_ES3.dat};%\label{p6}  
\end{loglogaxis}

\end{tikzpicture}
}

% In order to have the same mark size as in the figure
\bgroup% compute size of \ref in ex
\sbox0{\ref{data}}%
\pgfmathparse{\ht0/1ex}%
\xdef\refsize{\pgfmathresult ex}%
\egroup

\caption[Caption in ToC]{
Burgers equation: performance of various ROMs (without hyper-reduction).
(a) continuous approximation.
(b) discontinuous approximation.
Projection error \tikzref{p1}, 
 Galerkin \tikzref{p2},
minimum residual \tikzref{p3} ,
approximate minimum residual   ($J=N$ \tikzref{p4} , 
$J=2N$ \tikzref{p5} , $J=3N$ \tikzref{p6}).
}
\label{fig:burgers_projection}
\end{figure}

In Figure \ref{fig:burgers_projection_EQ}, we illustrate performance of the hyper-reduction procedure;
here, we consider empirical test spaces of size $J=2N$. 
In Figure \ref{fig:burgers_projection_EQ}(a), 
we show the number of sampled elements $Q$ for several choices of $N$ for two different tolerances
(the total number of elements is equal to $N_{\rm e}=2616$): we observe that the number of sampled elements grows linearly with $N$,  and    ranges from $1 \%$ to 
$4 \%$ 
of the total number  for $tol = 10^{-8}$, and from 
$1 \%$ to  $10 \%$
for $tol = 2.5 \cdot 10^{-11}$.
Here, the tolerance $tol$ is a lower bound on the size of a step: the active-set   iterative procedure terminates at the $k$-th iteration if 
$\| \boldsymbol{\rho}^{{\rm eq,} k+1} -
\boldsymbol{\rho}^{{\rm eq,} k} 
 \|_2 \leq tol$.
In Figure \ref{fig:burgers_projection_EQ}(b), 
in the case of continuous approximations, 
we show the relative $L^2$ error 
\begin{equation}
\label{eq:reference_config_true}
E_{\rm avg}=
\frac{1}{n_{\rm test}}
\;\;
\sum_{j=1}^{n_{\rm test}}
\;\;
\frac{
\| \widetilde{U}_{\mu^j} - \widehat{U}_{\mu^j}   \|
}{\| \widetilde{U}_{\mu^j}  \|},
\end{equation}
for the hyper-reduced ROM, and we compare it with the error $E_{\rm avg}^{\rm hf}$   (hf quad)
 obtained with $J=2 N$ (same as magenta curve in Figure  \ref{fig:burgers_projection}(a)). 
Figure \ref{fig:burgers_projection_EQ}(c) shows the average online computational cost of Algorithm \ref{alg:online} with and without hyper-reduction; in 
Figure \ref{fig:burgers_projection_EQ}(d),
we report the average speedup of the hyper-reduced ROM with respect  to the ROM with hf quadrature and the average speedup of  
the hyper-reduced ROM compared to an explicit Runge-Kutta DG (RKDG) time-marching scheme with $N_{\rm hf}=450$  spatial degrees of freedom\footnote{The space-time solver used for snapshot generation relies on a RKDG time-marching scheme to generate an initial guess and then  to a Newton method with approximate line search to compute the space-time solution. Since we have not optimized performance of the hf space-time solver, we report absolute timings and we provide a comparison with respect to a state-of-the-art explicit solver with   comparable accuracy.}.
Note that hyper-reduction reduces online costs by a factor ten for all $N$; on the other hand, the speedup with respect to the explicit RKDG solver ranges from $2.1 \cdot 10^3$ for $N=1$ to 
$1.5 \cdot 10^2$ for $N=10$.
 
% \label{fig:burgers_projection_EQ}
\begin{figure}[h!]
\centering
 \subfloat[ ] 
{  \includegraphics[width=0.4\textwidth]
 {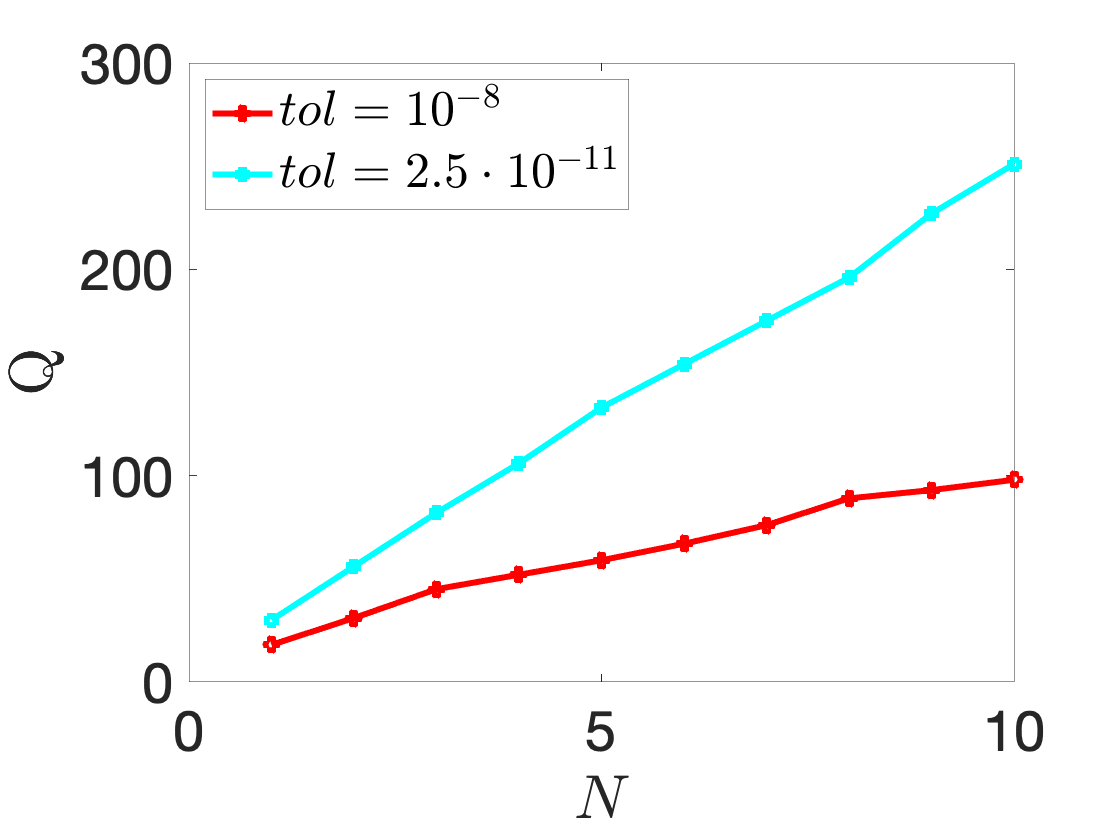}}
  ~~
 \subfloat[ ] 
{  \includegraphics[width=0.4\textwidth]
 {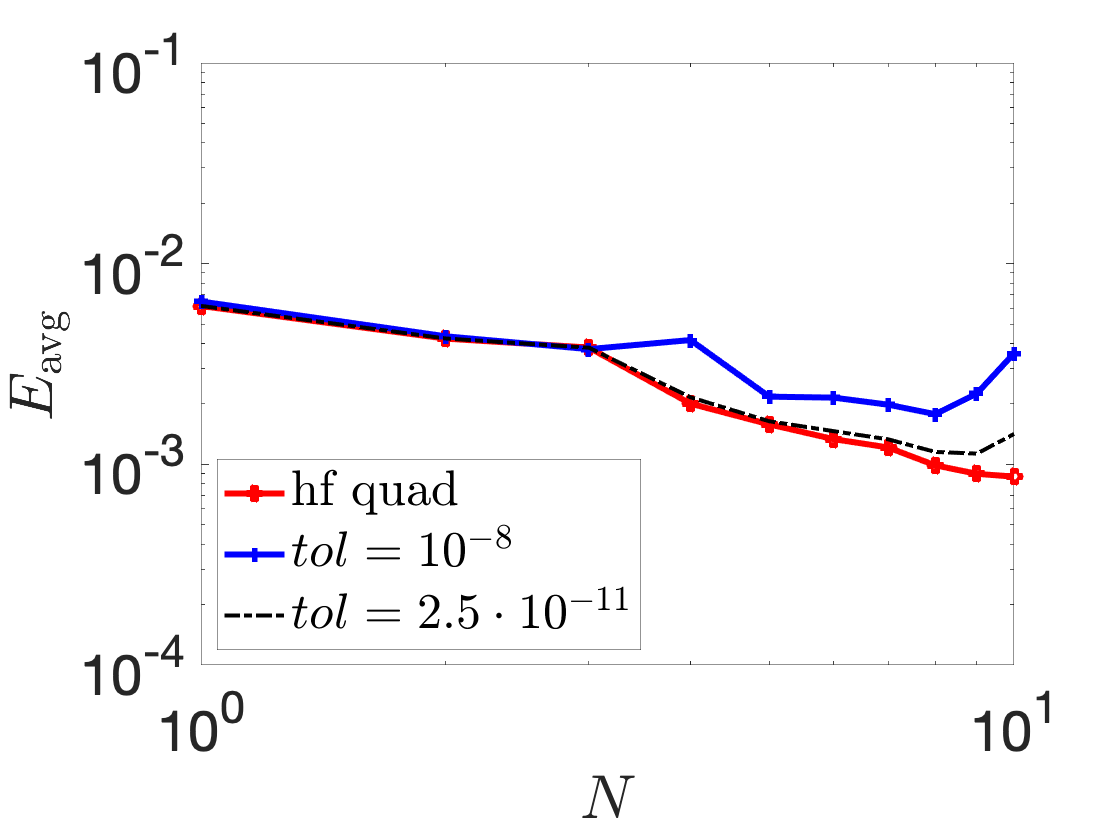}}
  
  \subfloat[ ] 
{  \includegraphics[width=0.4\textwidth]
 {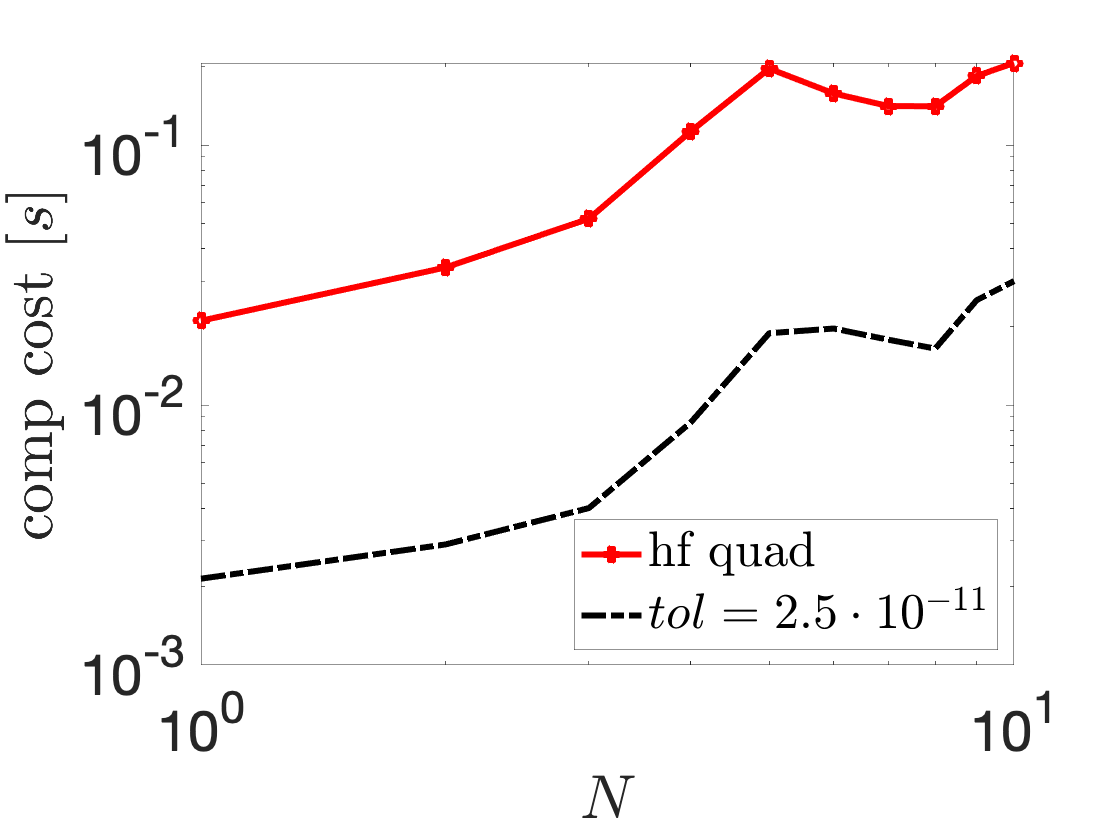}}
  ~~
 \subfloat[ ] 
{\includegraphics[width=0.4\textwidth]
 {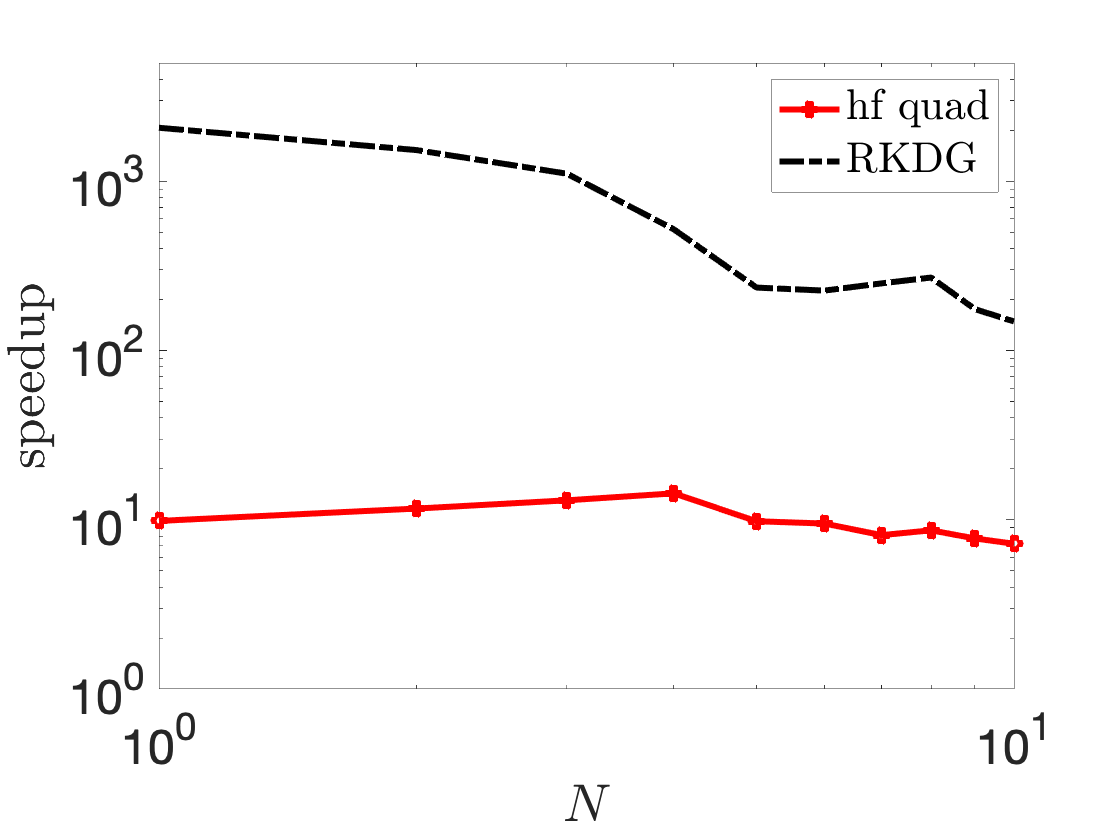}} 
  
\caption{Burgers equation; hyper-reduction for continuous approximation. 
(a) number of sampled elements $Q$ for several values of $N$, $J=2N$ and two tolerances.
(b) relative $L^2$ error  $E_{\rm avg}$ with respect to $N$ for two tolerances. 
(c) average  computational cost of Algorithm \ref{alg:online} with and without hyper-reduction; 
(d) average speedup 
with respect  to the ROM with hf quadrature (hf quad) and
with respect to a RKDG explicit solver (RKDG).
 }
 \label{fig:burgers_projection_EQ}
  \end{figure} 

Finally, in Figure \ref{fig:burgers_projection_EQ2}, we show the mesh and the reduced meshes for two choices of  trial and test spaces: we observe that most sampled elements are located in the proximity of the shock.
Thanks to the registration process, the position of the shocks is nearly parameter-independent: this explains why the hyper-reduction procedure is able to achieve accurate performance with a limited number of elements.

% \label{fig:burgers_projection_EQ2}
\begin{figure}[h!]
\centering
\subfloat[] 
{  \includegraphics[width=0.3\textwidth]
 {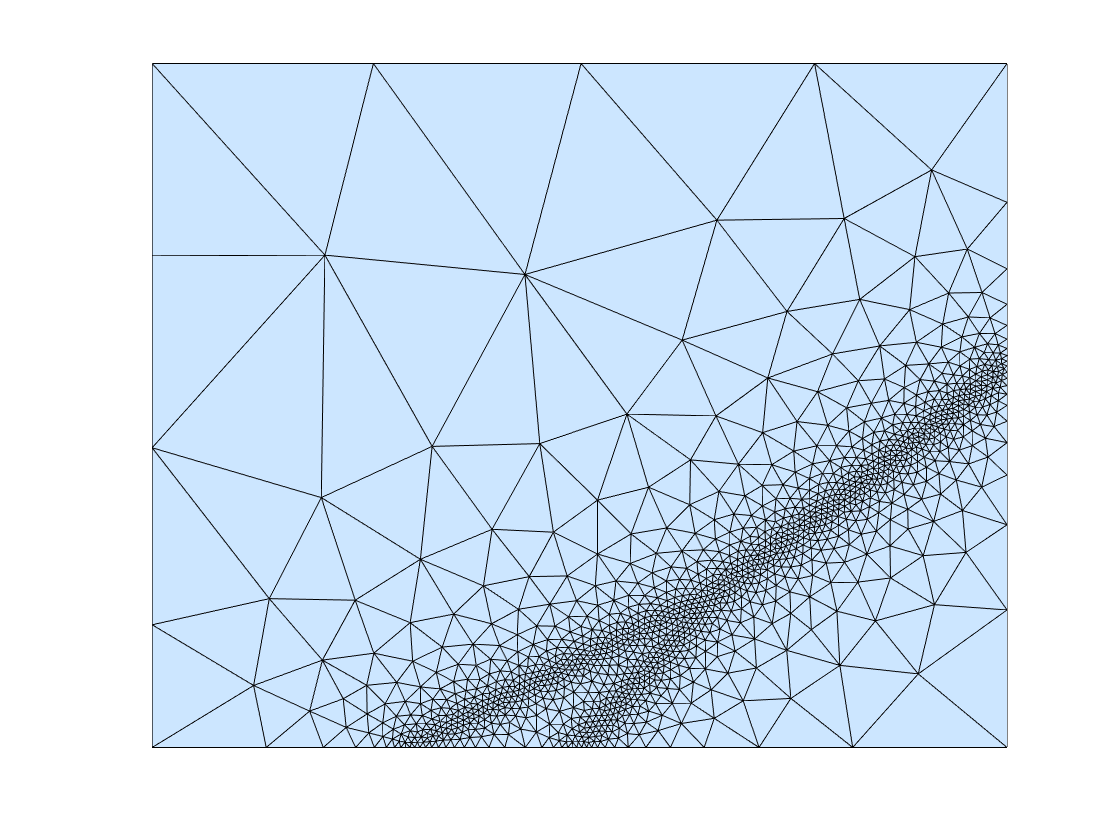}}
~~
\subfloat[$N=2,J=4$] 
{  \includegraphics[width=0.3\textwidth]
 {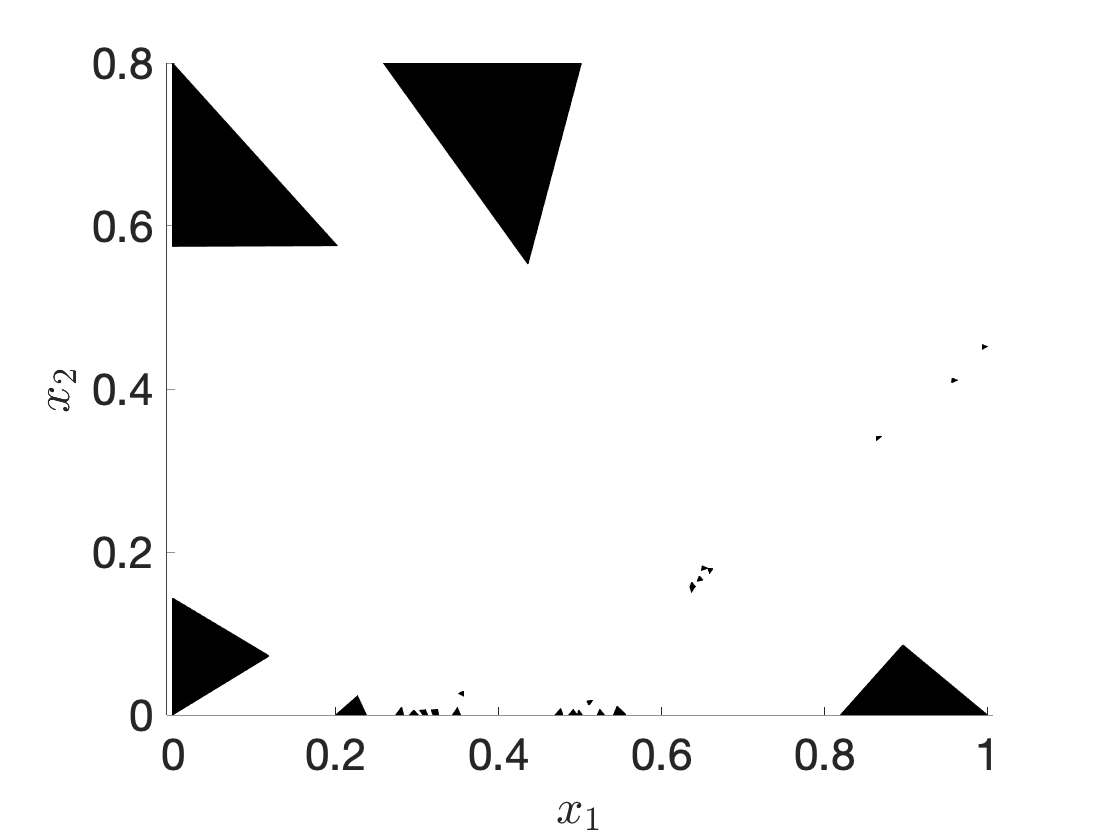}}
    ~~
 \subfloat[$N=6,J=12$] 
{  \includegraphics[width=0.3\textwidth]
 {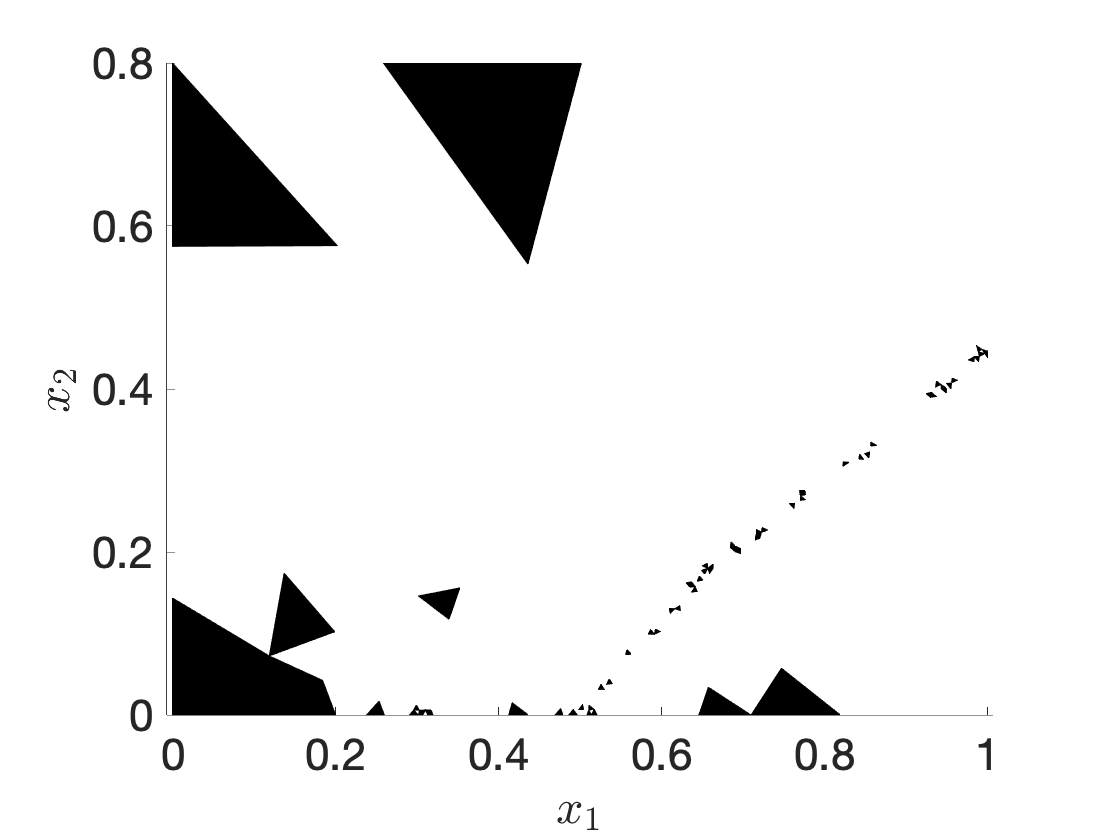}}
 
 \caption{Burgers equation; hyper-reduction.
Sampled elements for $tol=10^{-8}$, for two values of $N$.
}
 \label{fig:burgers_projection_EQ2}
  \end{figure} 
 
\subsection{Shallow water equations}
\label{sec:shallow_water}

In Figure \ref{fig:spacetimereg_sv1}, 
 we illustrate performance of space-time registration. The  mapping is generated based on $n_{\rm train}=100$ snapshots through Algorithm \ref{alg:registration} with  $N_{\rm max}=5$, $\xi=10^{-4}$, $M_{\rm hf}=128$, $tol_{\rm pod}=10^{-4}$. 
We recall that the algorithm is applied to the filtered height $h_{\mu}^{\rm f}$ and that the initial template space is set equal to
$\mathcal{T}_{N_0=2} = {\rm span} \{ h_{\bar{\mu}}^{\rm f},  h_{\rm bf}^{\rm f} \}$.
 The resulting map consists of a five-term expansion ($M=5$).
Figure \ref{fig:spacetimereg_sv1}(a) shows the behavior of the $L^2$ POD eigenvalues associated with the snapshots $\{ U_{\mu^k}  \}_{k=1}^{n_{\rm train}}$ and with the mapped snapshots
$\{ \widetilde{U}_{\mu^k}  \}_{k=1}^{n_{\rm train}}$;
Figure \ref{fig:spacetimereg_sv1}(b) shows the behavior of the out-of-sample relative projection error based on $n_{\rm test}=20$ snapshots
$\{ U_{\mu^j}  \}_{j=1}^{n_{\rm test}}$ with 
$\mu^1,\ldots, \mu^{n_{\rm test}} \overset{\rm iid}{\sim} {\rm Uniform}(\mathcal{P})$
(cf. \eqref{eq:best_fit_error}). We observe that the approach is extremely effective to reduce the linear complexity of the solution manifold for moderate values of $N$. 
 
%Shallow water equations;  space-time registration. (1)
\begin{figure}[h!]
\centering
\subfloat[ ] 
{  \includegraphics[width=0.4\textwidth]
 {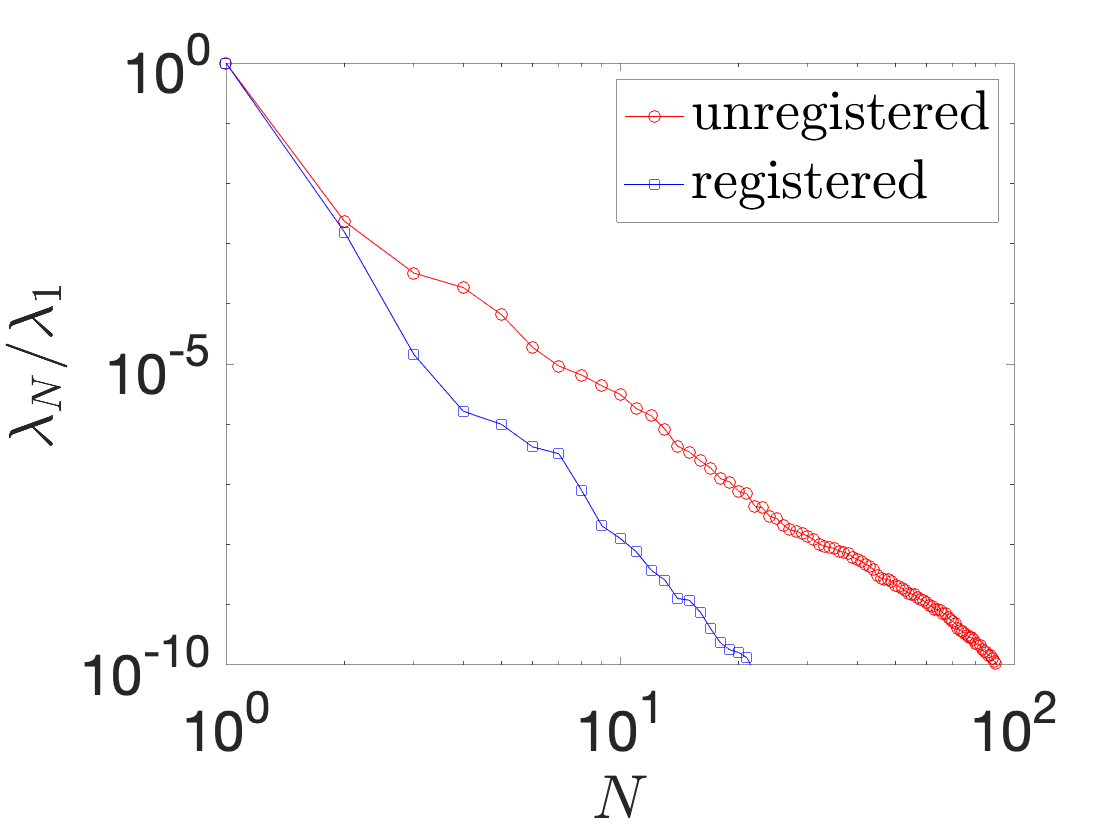}}
  ~~
 \subfloat[ ] 
{  \includegraphics[width=0.4\textwidth]
 {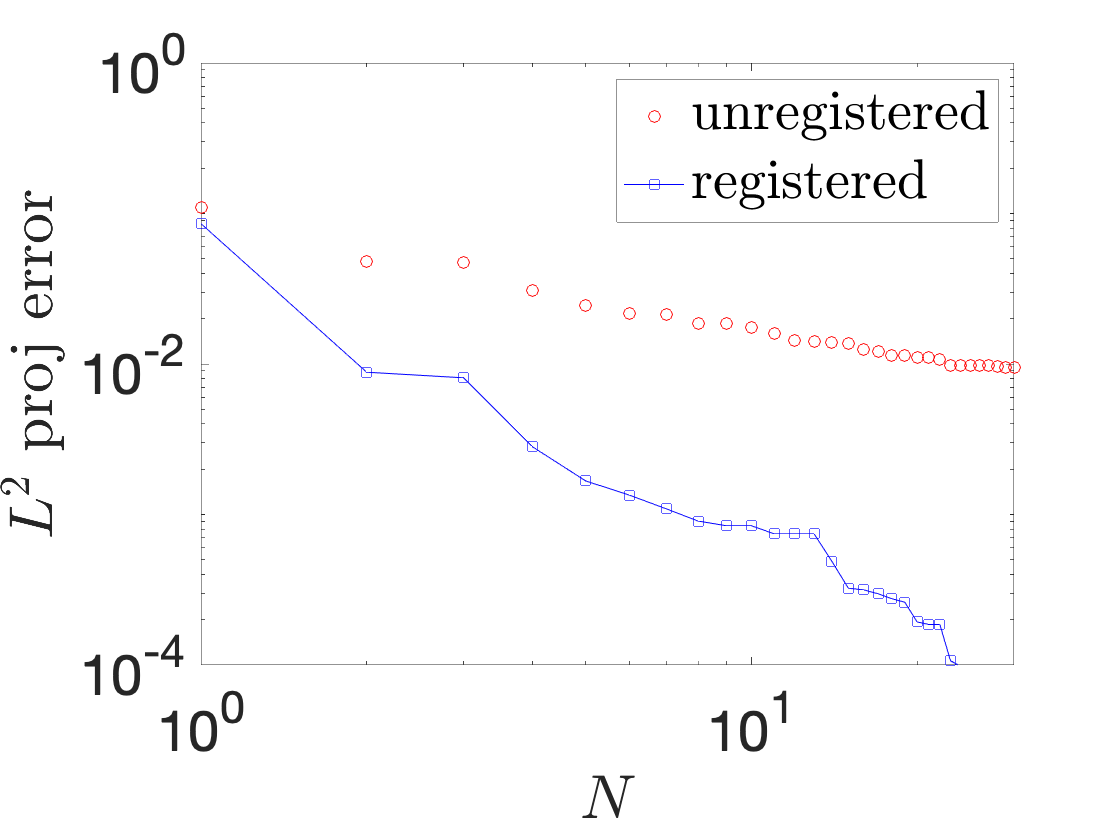}}
  
 \caption{Shallow water equations;  space-time registration.
(a)  behavior of the POD eigenvalues associated with the unregistered 
and registered configurations. 
(b)  behavior of the out-of-sample maximum relative projection error 
(cf. \eqref{eq:best_fit_error})
with and without registration.
}
 \label{fig:spacetimereg_sv1}
  \end{figure}  

 In Figure  \ref{fig:sv_projection},   we illustrate performance of projection-based ROMs without hyper-reduction. 
Similarly to the Burgers equation (cf. Figure \ref{fig:burgers_projection}), we empirically find that Galerkin projection might lead to instabilities; on the other hand, our approximate minimum residual approach is effective, provided that the size of the test space satisfies $J \gtrsim 2 N$. Furthermore, the continuous approximation introduces an additional error that is negligible for $N \leq 7$. 

 In Figure \ref{fig:sv_projection_EQ},
 we illustrate performance of  the   hyper-reduction procedure.
Figure \ref{fig:sv_projection_EQ}(a) shows the number of sampled elements $Q$ for several choices of $N$,  $J=2N$, and  two different tolerances (the total number of elements is equal to $N_{\rm e}=2364$).
Note that as for the Burgers model problem
the number of sampled elements grows linearly with $N$.
Figure \ref{fig:sv_projection_EQ}(b) shows the behavior of the relative $L^2$ error  \eqref{eq:reference_config_true} for the hyper-reduced ROM, and we compare it with the error of the ROM based on the truth quadrature. Note that for $tol = 2.5 \cdot 10^{-11}$ the hyper-reduced ROM guarantees the same accuracy as the non-hyper-reduced ROM, for all values of $N$ considered.
Figure \ref{fig:sv_projection_EQ}(c) shows the average online computational cost of Algorithm \ref{alg:online} with and without hyper-reduction; in 
Figure \ref{fig:sv_projection_EQ}(d),
we report the average speedup of the hyper-reduced ROM with respect  to the ROM with hf quadrature and the average speedup of the 
the hyper-reduced ROM compared to a explicit Runge-Kutta DG time-marching scheme with $N_{\rm hf}=900$  spatial degrees of freedom.
Note that hyper-reduction reduces online costs by roughly a factor ten  for all $N$; on the other hand, the speedup with respect to the explicit RKDG solver ranges from $4.4 \cdot 10^2$ for $N=1$ to 
$7.2\cdot 10^1$ for $N=10$.
 
\begin{figure}[H]
\centering
\subfloat[ ] {
\begin{tikzpicture}[scale=0.9]
\begin{loglogaxis}[
xlabel = {$N$},
  ylabel = {$E_{\rm avg}^{\rm hf}$ },
  line width=1.2pt,
  mark size=3.0pt,
  ymin=0.0001,   ymax=0.05,
  ] 
   \addplot[line width=1.pt,color=black,mark=o] table {data_sv_CG_proj.dat};%\label{p1}
  \addplot[line width=1.pt,color=red,mark=square]  table {data_sv_CG_gal.dat};%\label{p2}
  \addplot[line width=1.pt,color=blue,mark=triangle*] table {data_sv_CG_minres.dat};%\label{p3}
  
    \addplot[line width=1.pt,color=cyan,mark=o] table {data_sv_CG_ES1.dat};%\label{p4}
  \addplot[line width=1.pt,color=magenta,mark=square]  table {data_sv_CG_ES2.dat};%\label{p5}
  \addplot[line width=1.pt,color=green,mark=triangle*] table {data_sv_CG_ES3.dat};%\label{p6}  
\end{loglogaxis}

\end{tikzpicture}
}
~~~
\subfloat[ ] {
\begin{tikzpicture}[scale=0.9]
\begin{loglogaxis}[
xlabel = {$N$},
  ylabel = {$E_{\rm avg}^{\rm hf}$ },
  line width=1.2pt,
  mark size=3.0pt,
  ymin=0.0001,   ymax=0.05,
  ]
 
   \addplot[line width=1.pt,color=black,mark=o] table {data_sv_DG_proj.dat};%\label{p1}
  \addplot[line width=1.pt,color=red,mark=square]  table {data_sv_DG_gal.dat};%\label{p2}
  \addplot[line width=1.pt,color=blue,mark=triangle*] table {data_sv_DG_minres.dat};%\label{p3}
  
    \addplot[line width=1.pt,color=cyan,mark=o] table {data_sv_DG_ES1.dat};%\label{p4}
  \addplot[line width=1.pt,color=magenta,mark=square]  table {data_sv_DG_ES2.dat};%\label{p5}
  \addplot[line width=1.pt,color=green,mark=triangle*] table {data_sv_DG_ES3.dat};%\label{p6}  
\end{loglogaxis}

\end{tikzpicture}
}

% In order to have the same mark size as in the figure
\bgroup% compute size of \ref in ex
\sbox0{\ref{data}}%
\pgfmathparse{\ht0/1ex}%
\xdef\refsize{\pgfmathresult ex}%
\egroup

\caption[Caption in ToC]{
Shallow water equations: performance of various ROMs (without hyper-reduction).
(a) continuous approximation.
(b) discontinuous approximation.
Projection error \tikzref{p1}, 
 Galerkin \tikzref{p2},
minimum residual \tikzref{p3} ,
approximate minimum residual ($J=N$ \tikzref{p4} , 
$J=2N$ \tikzref{p5} , $J=3N$ \tikzref{p6}).
}
\label{fig:sv_projection}
\end{figure}
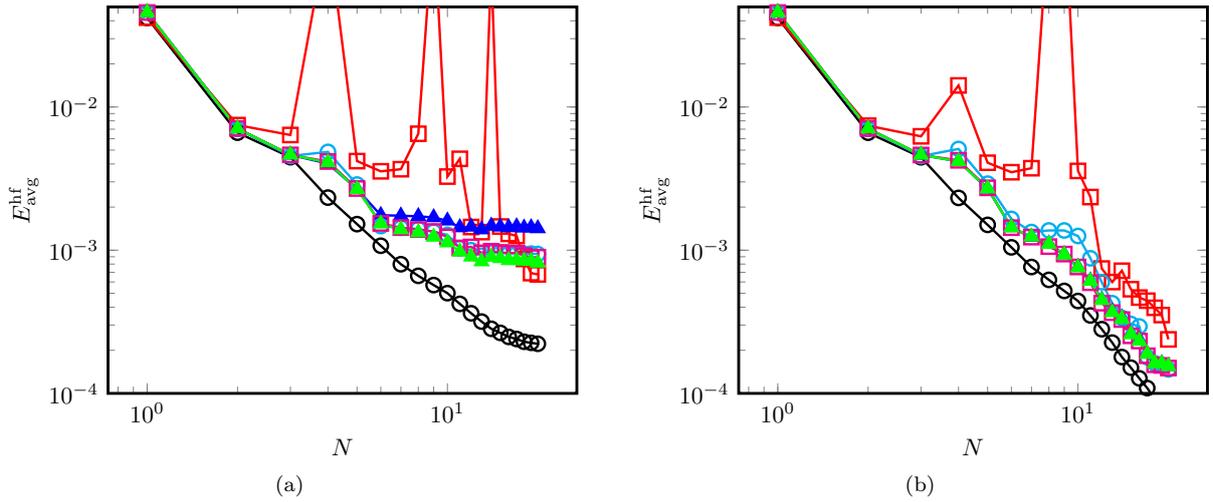
 
% \label{fig:sv_projection_EQ}
\begin{figure}[h!]
\centering
 \subfloat[ ] 
{  \includegraphics[width=0.4\textwidth]
 {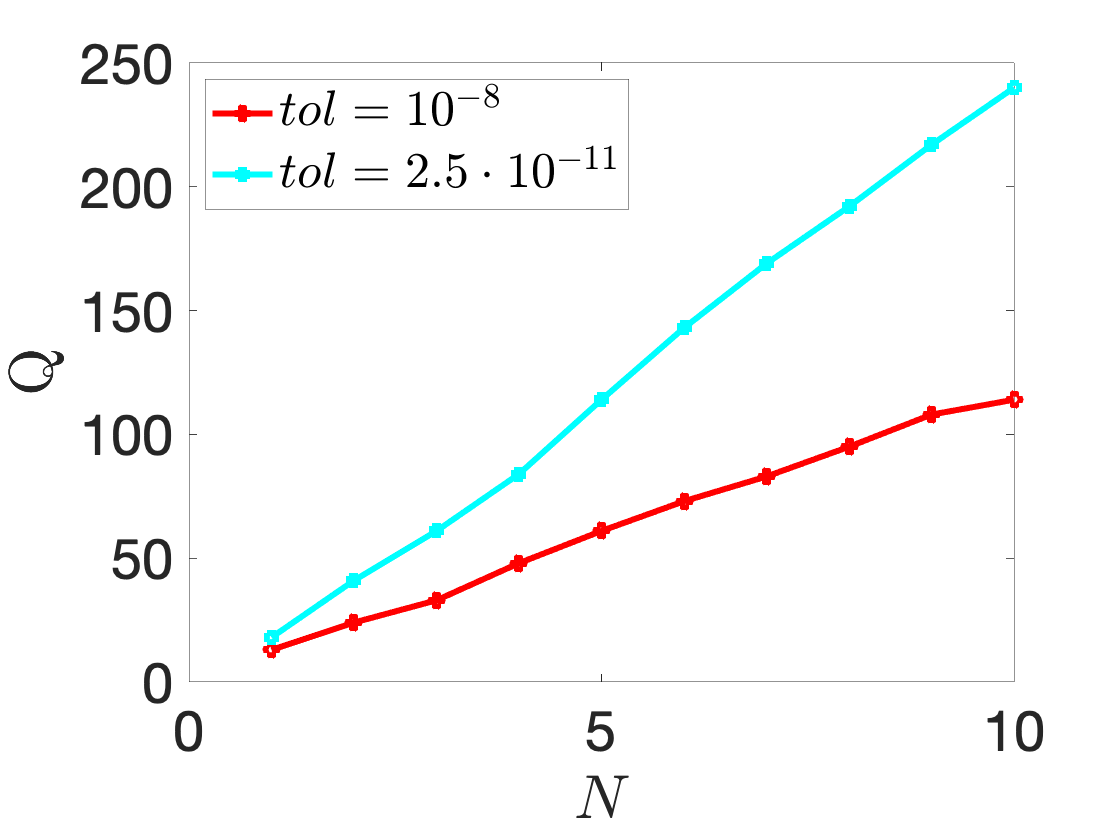}}
  ~~
 \subfloat[ ] 
{  \includegraphics[width=0.4\textwidth]
 {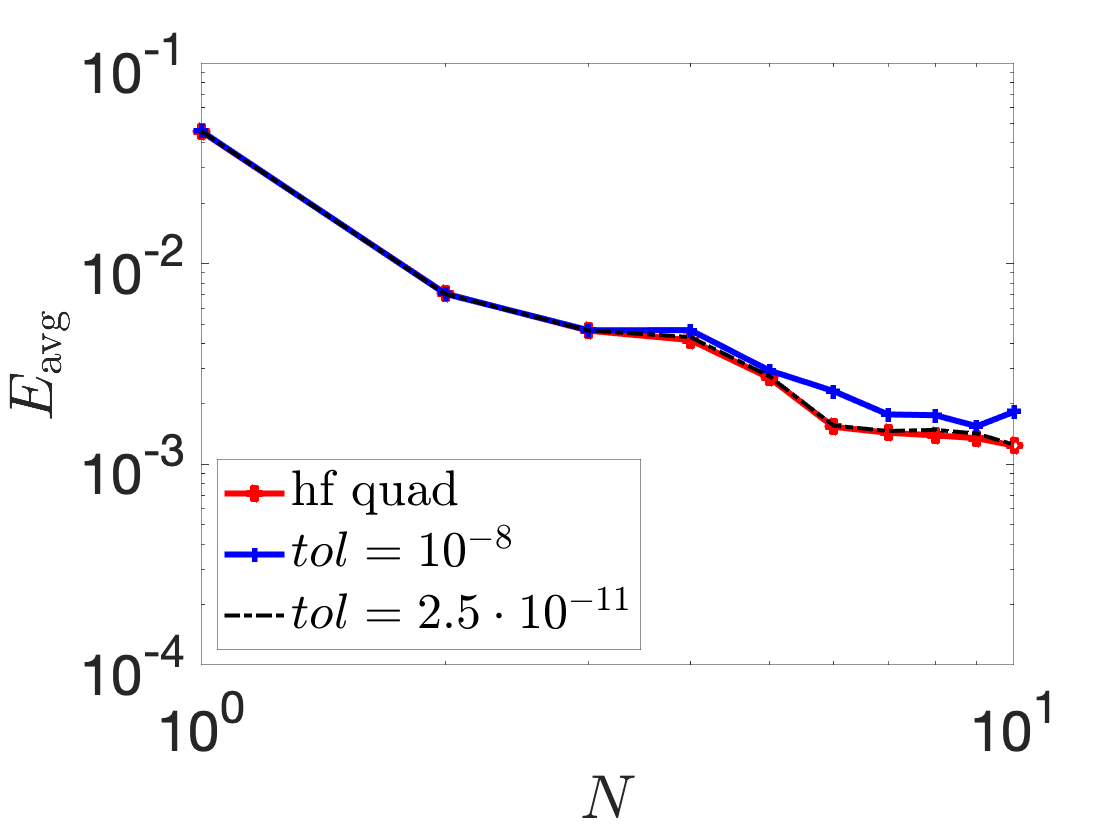}}
  
 \subfloat[ ] 
{  \includegraphics[width=0.4\textwidth]
 {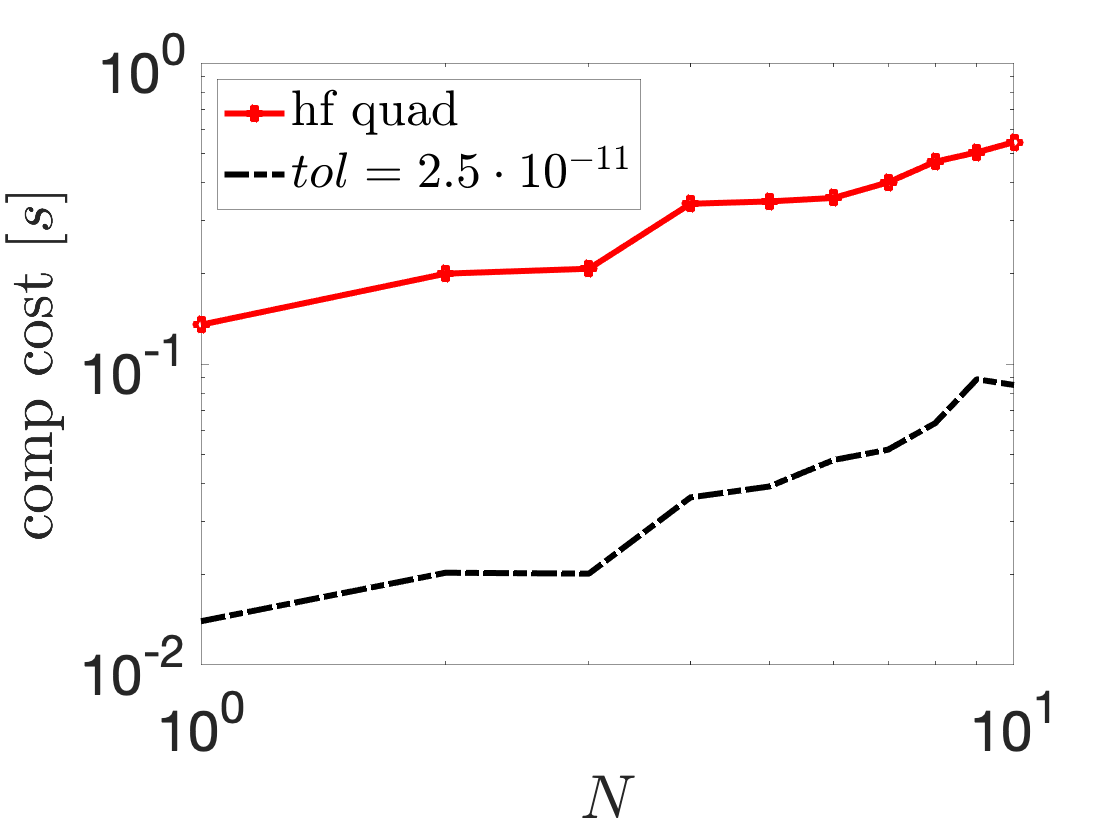}}
  ~~
 \subfloat[ ] 
{  \includegraphics[width=0.4\textwidth]
 {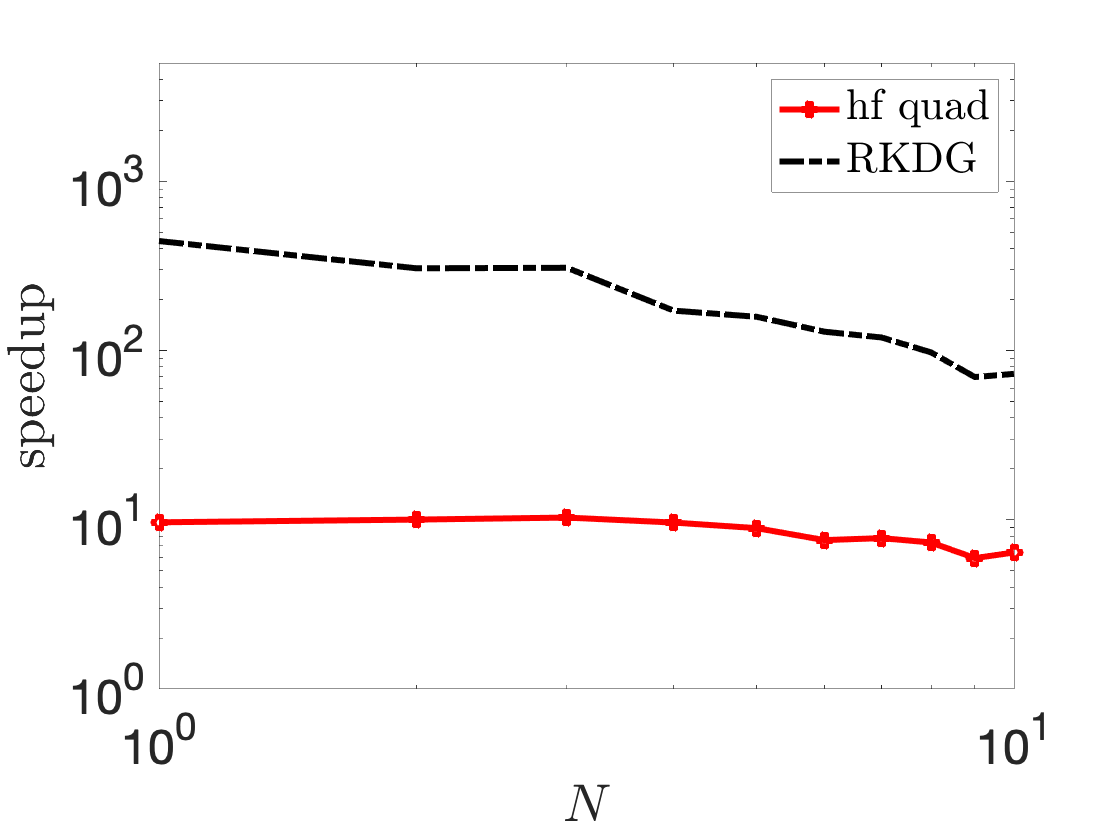}}  
  
\caption{Shallow water equations; hyper-reduction for continuous approximation. 
(a) number of sampled elements $Q$ for several values of $N$, $J=2N$ and two tolerances.
(b) relative $L^2$ error  $E_{\rm avg}$ with respect to $N$ for two tolerances. 
c) average  computational cost of Algorithm \ref{alg:online} with and without hyper-reduction; 
(d) average speedup 
with respect  to the ROM with hf quadrature (hf quad) and
with respect to a RKDG explicit solver (RKDG).
 }
 \label{fig:sv_projection_EQ}
  \end{figure}

\section{Conclusions}
\label{sec:conclusions}
In this work, we developed and numerically validated a model reduction procedure for hyperbolic PDEs in presence of shocks. The approach relies on a general (i.e., independent of the underlying PDE model) data compression procedure: given the snapshot set, we first perform    space-time registration to ``freeze" the position of the shock; then, we resort to POD to approximate the registered (mapped) field. To estimate the registered field, we resort to an hyper-reduced approximate minimum residual formulation: our statement is based on the introduction of a low-dimensional empirical test space \cite{taddei2018offline} and of an empirical quadrature rule 
 \cite{farhat2015structure,yano2019discontinuous} to reduce online assembling costs.

Numerical results demonstrate the effectiveness of the space-time registration-based compression strategy to construct a low-dimensional representation of the solution manifold for the two model problems considered.  Furthermore, 
by improving the linear compressibility of the solution manifold, registration has also the effect of reducing the required size $J$ of the empirical test space, and also the number $Q$ of sampled elements.

We aim to extend the approach in several directions.
In this work, we resorted to a non-intrusive method for the computation of the mapping coefficients, and to an intrusive (projection-based) method for the computation of the registered solution: in the future, we aim to combine our data compression procedure with fully-intrusive ROMs and non-intrusive ROMs.
Fully-intrusive approaches based on projection (see \cite{mojgani2017arbitrary})
might help us devise robust ROMs based on moderate-dimensional snapshot sets.
Furthermore, if complemented by reliable \emph{a posteriori} error indicators, they might also lead to the development of adaptive sampling algorithms to dramatically reduce offline training costs.
On the other hand, non-intrusive techniques
(see \cite{chakir2009methode,gallinari2018reduced,guo2018reduced})
might be considerably easier to implement and also
to integrate with existing codes, and --- at the price of larger training costs --- might contribute to reduce online costs.

We also wish to investigate the performance of the proposed model reduction technique
to a broader  class of PDE models in computational mechanics, in one and more dimensions. In this respect, we wish to consider two-dimensional steady and unsteady advection-dominated problems that arise in incompressible and compressible fluid mechanics applications. Furthermore, we  envision that our approach might be of interest for 
solid mechanics applications such as
contact problems.

\medskip

\textbf{Acknowledgments:}
The authors thank 
Dr. Andrea Ferrero (Politecnico di Torino) and
Professor  Angelo Iollo (Inria Bordeaux)   for fruitful discussions.
The present work was  partially 
supported by EDF
(EDF-INRIA research project number  8610-4220129899).
\appendix

\section{Change-of-variable formulas}
\label{sec:change_variable}
For completeness, we report here standard change-of-variable formulas used to derive the mapped formulation of section \ref{sec:space_time}. Given $\widetilde{\texttt{D}} \subset \Omega$ and the bijection
$\boldsymbol{\Phi}:  \widetilde{\texttt{D}} \to \texttt{D}$, we have
\begin{equation}
\label{eq:change_variable}
\begin{array}{l}
\displaystyle{
\int_{\texttt{D}}  \,  u  \, d \mathbf{x} \, = \, 
\int_{  \widetilde{\texttt{D}}   } \, \left( u \circ  \boldsymbol{\Phi}   \right)  \;
g   \, d  \mathbf{X};
}
\\[3mm]
\displaystyle{ 
\int_{\texttt{D}}  \, 
\mathbf{b}  \cdot \nabla \, u  \, d\mathbf{x} \, = \,
\int_{  \widetilde{\texttt{D}}   }  \,
 \mathbf{b} \circ   \boldsymbol{\Phi}   \cdot 
\left(   \mathbf{G} ^{-T} \, 
\widetilde{\nabla} \,  u \circ  \boldsymbol{\Phi}    \right)   \, 
g \, \,  d  \mathbf{X} 
}
\\[3mm]
\displaystyle{ 
\int_{ \partial   \texttt{D}}  \,  u   \, d \mathbf{x} \, = \, 
\int_{ \partial  \widetilde{\texttt{D}}   } \, u \circ \boldsymbol{\Phi}    \, 
\|   g   \;     \mathbf{G}^{-T} \;  \mathbf{N}    \|_2
 \, d  \mathbf{X};
}
\\
\end{array}
\end{equation}
for all $u, \mathbf{b} \in C^1(\Omega)$. Note that \eqref{eq:change_variable}$_3$ is a straightforward consequence of Nanson's formula (cf. \cite[Chapter 1]{marsden1994mathematical}).

Fluxes $F_{\Phi}, S_{\Phi}$ should satisfy 
$$
\int_{ \widetilde{\texttt{D}} }  \;
\left(
\widetilde{\nabla} \cdot F_{\Phi}(\widetilde{U}) \, -\, S_{\Phi}(\widetilde{U})
\right)
\; d\mathbf{X} 
=
\int_{  {\texttt{D}} }  \;
\left(
 {\nabla} \cdot F({U}) \, -\, S({U})
\right) \; d\mathbf{x},
$$
for all $U \in C^1(\Omega; \mathbb{R}^P)$. Exploiting \eqref{eq:change_variable}, 
and the divergence theorem, 
we obtain that
$F_{\Phi}, S_{\Phi}$ satisfy
$F_{\Phi}(\cdot) = g  F(\cdot) \mathbf{G}^{-T}$  and
$S_{\Phi}(\cdot) = g  S(\cdot)$, which is \eqref{eq:mapped_strong_form_b}.

\section{Online residual calculations}
\label{sec:implementation}
We provide some details concerning the online calculation of the residual; we further explain why the CG approximation reduces the memory cost of the ROM.
As in the main body of the paper, we denote by $\mathcal{I}_{\rm eq} \subset \{1,\ldots, N_{\rm e} \}$ the sampled elements over which we perform online integration, and we define  $\mathcal{X}_{\rm hf}^{\rm cg} = \mathcal{X}_{\rm hf}  \cap [C(\Omega)]^D$.  We also denote by  $\mathcal{U}_{\rm hf}$ the scalar DG FE space of order $p$ such that $\mathcal{X}_{\rm hf} = [ \mathcal{U}_{\rm hf} ]^D$.  We  define the average operator $\{ \cdot \}$, the normal vector average $\{ \cdot \}_{\mathbf{n}}$, and the jump operator $\mathcal{J} (\cdot)$
 such that
\begin{equation}
\label{eq:edge_ops}
\{ w \} = \left\{
\begin{array}{ll}
\frac{1}{2}(w^+ + w^-) &  {\rm on} \; \partial \mathcal{T}_{\rm hf} \setminus \partial \Omega, \\[3mm]
w &  {\rm on} \; \partial \mathcal{T}_{\rm hf} \cap \partial \Omega; \\
\end{array}
\right.
\quad
\{ w \}_{\mathbf{n}} =
\{ w \}  \cdot \mathbf{n}^+;
\quad
\mathcal{J}  w 
=
 \left\{
\begin{array}{ll}
 w^+ - w^-  &  {\rm on} \; \partial \mathcal{T}_{\rm hf} \setminus \partial \Omega, \\[3mm]
w   &  {\rm on} \; \partial \mathcal{T}_{\rm hf} \cap \partial \Omega. \\
\end{array}
\right.
\end{equation}
We further denote by $\Gamma_{\rm D}^{i}  \subset \partial \Omega $ the Dirichlet boundary for the $i$-th component of the solution field, $i=1,\ldots,D$; and we introduce  the Dirichlet operators
\begin{equation}
\label{eq:dirichlet_operator}
\left( \mathcal{D}(w)  \right)_i
=
 \left\{
\begin{array}{ll}
( U_{\rm D}  )_i &  {\rm on} \; \partial \mathcal{T}_{\rm hf} \cap \Gamma_{\rm D}^{i} , \\[3mm]
w_i   &  {\rm on} \; \partial \mathcal{T}_{\rm hf} \setminus \Gamma_{\rm D}^{i}. \\
\end{array}
\right.
\quad
  \mathcal{J}_{\rm D}^i v  
=
 \left\{
\begin{array}{ll}
v^+ - v^- &  
{\rm on} \; \partial \mathcal{T}_{\rm hf} \cap \Gamma_{\rm D}^{i} , \\[3mm]
v - ( U_{\rm D}  )_i    &  {\rm on} \; \partial \mathcal{T}_{\rm hf} \setminus \Gamma_{\rm D}^{i}. \\
\end{array}
\right.
i=1,\ldots,D;
\end{equation}
where $w \in \mathcal{X}_{\rm hf}$ and $v\in \mathcal{U}_{\rm hf}$. Finally,
given the facet $\partial \texttt{D}_{\ell}^k$, $\ell=1,2,3$, 
$k=1,\ldots,N_{\rm e}$, 
 we introduce the lifting operator
 $\mathbf{r}_{\ell,k} : \left[L^2(\partial \texttt{D}_{\ell}^k) \right]^d \to  [\mathcal{U}_{\rm hf}]^2$
\begin{equation}
\label{eq:lifting_operator}
\sum_{k'} \int_{\texttt{D}^{k'}} \,  \mathbf{r}_{\ell,k}(  w) \,  \cdot \, v \, dx \,  = \, - \int_{
\partial \texttt{D}_{\ell}^k
} \, w \,\cdot \,  \{  v \} \, dx \quad
\forall \, \mathbf{v} \in [  \mathcal{U}_{\rm hf}]^2.
\end{equation}

Recalling the expression of $R_{\Phi}^{\rm c}$ and
$R^{\rm d}$,  we find that
$R_{\Phi}^{\rm c,eq}(w,v)  =\sum_{k \in  \mathcal{I}_{\rm eq}  }  \rho_k^{\rm eq} r_k^{\rm c}(w,v)$
and
$R^{\rm d,eq}(w,v)  =\sum_{k \in  \mathcal{I}_{\rm eq}  }  \rho_k^{\rm eq} r_k^{\rm d}(w,v)$
 with 
\begin{equation}
\label{eq:fluxes_explained}
\begin{array}{l}
\displaystyle{
r_k^{\rm c}(w,v) = 
\int_{\partial \texttt{D}^k} 
v \cdot \mathcal{H}_{\Phi}(w^+, w^-, \mathbf{N})  d \mathbf{X} \, - \,
\int_{  \texttt{D}^k} 
\widetilde{\nabla}  v\cdot  
F_{\Phi}(w )
 \; d \mathbf{X} \, 
-
\int_{ \texttt{D}^k} 
v\cdot S_{\Phi}(w) d \mathbf{X}
}\\[3mm]
\displaystyle{
r_k^{\rm d}(w,v) = 
\left(
\sum_{i=1}^D \; 
\sum_{\ell=1}^3 \; 
\int_{\partial \texttt{D}_{\ell}^k} 
\delta_i^{\rm D}
\left(
\{ \varepsilon \widetilde{\nabla} w_i  \}_{\mathbf{n}} 
\cdot \mathcal{J} v_i
\,+ \, 
\eta 
\{  \varepsilon  \mathbf{r}_{\ell,k} ( \mathbf{n}^+ \mathcal{J}_{\rm D}^i(w_i)   ) \}_{\mathbf{n}}
\cdot \mathcal{J} v_i
\,+ \, 
\{ \varepsilon \widetilde{\nabla} v_i  \}_{\mathbf{n}} 
\cdot \mathcal{J}_{\rm D}^i w_i  
\right)
d \mathbf{X}
\right)
}\\[2mm]
\displaystyle{
\hspace{4.7in}
\, - \,
\int_{  \texttt{D}^k} 
\; \varepsilon \; 
\widetilde{\nabla}  w  \cdot  
\widetilde{\nabla}  v
 \; d \mathbf{X},
 }
 \\
\end{array}
\end{equation}
for all $w,v \in \mathcal{X}_{\rm hf}$,
where $\eta=3$, $\delta_i^{\rm D}=\frac{1}{2}$ on interior facets, $\delta_i^{\rm D}=1$ on $\Gamma_{\rm D}^i$ and
$\delta_i^{\rm D}=0$ on 
$\partial \Omega \setminus    \Gamma_{\rm D}^i$.
Then, 
exploiting the consistency of the numerical flux,
and the fact that if $w$ is continuous, $w^+=w^-$
on interior facets, we obtain
\begin{subequations}
\label{eq:fluxes_explained_CG}
\begin{equation}
\label{eq:convection_flux_explained}
r_k^{\rm c}(w,v) = 
\int_{\partial \texttt{D}^k}  \;
\| g \mathbf{G}^{-T} \mathbf{N}  \|_2 \;
v \cdot
\left(  F(\mathcal{D}(w)) \cdot \mathbf{n}  \right) d \mathbf{X} 
\, - \,
\int_{  \texttt{D}^k} 
\widetilde{\nabla}  v\cdot  
F_{\Phi}(  w )
 \; d \mathbf{X} \, 
-
\int_{ \texttt{D}^k} 
v\cdot S_{\Phi}(w) d \mathbf{X},
\end{equation}
and
\begin{equation}
\label{eq:diffusion_flux_explained}
\begin{array}{rl}
\displaystyle{r_k^{\rm d}(w,v) = }
&
\displaystyle{  
\left(
\sum_{i=1}^D \; 
\sum_{\ell=1}^3 \; 
\int_{\partial \texttt{D}_{\ell}^k \cap  \Gamma_{\rm D}^i}  
\delta_i^{\rm D}
\left(
\left(
  \varepsilon \widetilde{\nabla} w_i   
\,+ \, 
\eta 
  \varepsilon  \mathbf{r}_{\ell,k} ( \mathbf{n}  \mathcal{J}_{\rm D}^i w_i   )  
\right)  \cdot \mathbf{n}   v_i
\,+ \, 
\{ \varepsilon \widetilde{\nabla} v_i  \}_{\mathbf{n}} 
\cdot \mathcal{J}_{\rm D}^i w_i    
\right)
d \mathbf{X}
\right)
}
\\
&
\displaystyle{
\, - \,
\int_{  \texttt{D}^k} 
\; \varepsilon \; 
\widetilde{\nabla}  w  \cdot  
\widetilde{\nabla}  v
 \; d \mathbf{X}.
}
\\
\end{array}
\end{equation}
Note that the computation of
$r_k^{\rm c}(w,v) $ and 
$r_k^{\rm d}(w,v) $ in 
 \eqref{eq:fluxes_explained_CG}
 requires    the knowledge of $w,v$ only in $\texttt{D}^k$ and can be performed using element-wise residual evaluation routines implemented in many DG codes.
\end{subequations}

\section{Approximate minimum residual: analysis of the linear case}
\label{sec:AMR_linear_theory}
We study the performance of the approximate minimum residual (AMR) formulation for linear inf-sup stable problems:
\begin{equation}
\label{eq:variational_problem_linear}
{\rm find} \; u^{\star} \in \mathcal{X} :\; \;
A(u^{\star}, v) = F(v)
\qquad
\forall \, v \in \mathcal{Y},
\end{equation}
where
$(\mathcal{X}, \|  \cdot \| = \sqrt{(\cdot,\cdot)}  )$ and
$(\mathcal{Y}, \vertiii{\cdot}   = \sqrt{((\cdot,\cdot))}  )$  are suitable Hilbert spaces, and $A$ and $F$ are a bilinear and a linear form,
 $A \in \mathfrak{L}(\mathcal{X} ,  \mathcal{Y}')$, $F \in \mathcal{Y}'$. 
We denote by $\gamma$ and $\beta$ the continuity and inf-sup constants associated with the form $A$:
$$
\beta = 
\inf_{w\in \mathcal{X}\setminus \{ 0 \}   } \,\sup_{v\in \mathcal{Y}\setminus \{ 0 \}   }
\frac{A(w, v)}{\| w  \|  \vertiii{v}  },
\quad
\gamma = 
\sup_{w\in \mathcal{X}\setminus \{ 0 \}   } \,\sup_{v\in \mathcal{Y}\setminus \{ 0 \}   }
\frac{A(w, v)}{\| w  \|  \vertiii{v}  }.
$$ 
 Given the $N$-dimensional space $\mathcal{Z}_N \subset \mathcal{X}$ and 
 the $J$-dimensional space   $\mathcal{Y}_J \subset \mathcal{Y}$, $J \geq N$, we define the AMR statement:
 \begin{equation}
\label{eq:least_squares_ROM}
\hat{u} = {\rm arg} \min_{u \in \mathcal{Z}_N} \| A(u, \cdot) - F  \|_{\mathcal{Y}_J'}
:=
\sup_{v \in \mathcal{Y}_J \setminus \{ 0 \}} \frac{A(u, v) - F(v) }{
\vertiii{v}}.
\end{equation}
Note that  AMR reduces to   Galerkin   for $\mathcal{Y}_J = \mathcal{Z}_N$, while AMR reduces to minimum residual for $\mathcal{Y}_J = \mathcal{Y}$.

 In view of the analysis, 
we introduce the reduced inf-sup constant
 \begin{equation}
\label{eq:inf_sup_ROM}
\beta_{N,J} =  \inf_{w\in \mathcal{Z}_N\setminus \{ 0 \}   } \,\sup_{v\in \mathcal{Y}_J \setminus \{ 0 \}   }
\frac{A(w, v)}{\| w  \| \vertiii{v}   };
\end{equation} 
  furthermore,  we introduce the supremizing operator
 $S: \mathcal{Z}_N \to \mathcal{Y}$ such that
$S(\zeta) = \texttt{R}_{\mathcal{Y}} A(\zeta, \cdot) $, that is
$$
(( S(\zeta) , \, v ))
\,=\,
A(\zeta, v) \;\;
\forall \; 
v\in \mathcal{Y}, 
$$ 
 and the constant
 \begin{equation}
\label{eq:infsup_test}
\delta_{N,J}^{\rm test} = 
\inf_{ s \in  \mathcal{Y}_N^{\rm opt}   } \, 
\sup_{ v \in \mathcal{Y}_J    } \,
\frac{  ((s,v))   }{ \vertiii{s}  \vertiii{v}    }
\end{equation}
 where $\mathcal{Y}_N^{\rm opt}  = \{  S(\zeta) : \zeta \in  \mathcal{Z}_N \}$. Note that $\mathcal{Y}_N^{\rm opt}$ is the linear counterpart of the space in \eqref{eq:optimal_test_space}:  the constant $\delta_{n,m}^{\rm test} $ measures the proximity between the test space $\mathcal{Y}_J$ and the optimal test space 
 $\mathcal{Y}_N^{\rm opt} $.

 Next Proposition contains the key results of this section. In particular, we observe that the performance depends on the behavior of 
 $\delta_{N,J}^{\rm test} $ and thus on the  
 proximity between   $\mathcal{Y}_J$ and 
 $\mathcal{Y}_N^{\rm opt} $: this motivates the sampling strategy in Algorithm  \ref{alg:test_space}.

\begin{proposition}
\label{th:properties_AMR}
If $\beta, \beta_{N,J}>0$, the solution   $\hat{u}$ to \eqref{eq:least_squares_ROM} exists and is  unique. Furthermore, the following hold:
\begin{subequations}
\begin{equation}
\label{eq:stability_AMR}
\| \hat{{u}} \| \leq \frac{1}{\beta_{N,J}} \, \| F \|_{\mathcal{Y}'};
\end{equation}
\begin{equation}
\label{eq:error_AMR}
\| \hat{u}  -  u^{\star} \|  \leq
\frac{\gamma}{\delta_{N,J}^{\rm test}  \beta} \,  
 \inf_{u\in \mathcal{Z}_N} \, \| u -  u^{\star} \|.
\end{equation}
\end{subequations}
\end{proposition}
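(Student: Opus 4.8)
The plan is to treat \eqref{eq:least_squares_ROM} as a finite-dimensional linear least-squares problem and to route everything through the supremizing operator $S$ and the Riesz map $\texttt{R}_{\mathcal{Y}}$. Write $\Pi_{\mathcal{Y}_J}$ for the $\vertiii{\cdot}$-orthogonal projection onto $\mathcal{Y}_J$. The first step is the identity, valid for $u\in\mathcal{Z}_N$, $\|A(u,\cdot)-F\|_{\mathcal{Y}_J'} = \vertiii{\Pi_{\mathcal{Y}_J}\bigl(S(u)-\texttt{R}_{\mathcal{Y}}F\bigr)}$, so that $\hat u$ minimizes a convex quadratic over the finite-dimensional space $\mathcal{Z}_N$; since $\beta_{N,J}=\inf_{w\in\mathcal{Z}_N\setminus\{0\}}\vertiii{\Pi_{\mathcal{Y}_J}S(w)}/\|w\|$, the hypothesis $\beta_{N,J}>0$ says precisely that $u\mapsto\Pi_{\mathcal{Y}_J}S(u)$ is injective on $\mathcal{Z}_N$, which yields existence and uniqueness of $\hat u$. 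The Euler–Lagrange condition, after using that $\Pi_{\mathcal{Y}_J}$ is idempotent and self-adjoint, reads $A(\hat u,\Pi_{\mathcal{Y}_J}S(w))=F(\Pi_{\mathcal{Y}_J}S(w))$ for all $w\in\mathcal{Z}_N$; subtracting \eqref{eq:variational_problem_linear} gives the orthogonality $A(\hat u-u^\star,\Pi_{\mathcal{Y}_J}S(w))=0$ for all $w\in\mathcal{Z}_N$, which I reuse below.

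For \eqref{eq:stability_AMR}, take $w=\hat u$ in the optimality condition and set $v^\star:=\Pi_{\mathcal{Y}_J}S(\hat u)\in\mathcal{Y}_J$; a one-line computation with Riesz identities gives $\vertiii{v^\star}^2=A(\hat u,v^\star)=F(v^\star)\le\|F\|_{\mathcal{Y}'}\vertiii{v^\star}$, hence $\vertiii{v^\star}\le\|F\|_{\mathcal{Y}'}$, while the definition of $\beta_{N,J}$ yields $\beta_{N,J}\|\hat u\|\le \sup_{v\in\mathcal{Y}_J}A(\hat u,v)/\vertiii{v}=\vertiii{v^\star}$; combining the two gives $\|\hat u\|\le\beta_{N,J}^{-1}\|F\|_{\mathcal{Y}'}$. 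The other ingredient I need is the lower bound $\beta_{N,J}\ge\beta\,\delta_{N,J}^{\rm test}$: for $w\in\mathcal{Z}_N$ one has $S(w)\in\mathcal{Y}_N^{\rm opt}$, so \eqref{eq:infsup_test} gives $\delta_{N,J}^{\rm test}\vertiii{S(w)}\le\sup_{v\in\mathcal{Y}_J}((S(w),v))/\vertiii{v}=\sup_{v\in\mathcal{Y}_J}A(w,v)/\vertiii{v}$, whereas the global inf-sup condition gives $\beta\|w\|\le\vertiii{S(w)}$; chaining these and taking the infimum over $w\in\mathcal{Z}_N$ proves the claim.

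For the quasi-optimality estimate \eqref{eq:error_AMR} the clean argument is operator-theoretic. Because $\hat u$ depends linearly on $F$, hence on $u^\star$, and because the scheme is exact on $\mathcal{Z}_N$ (if $u^\star\in\mathcal{Z}_N$ the residual vanishes, so $\hat u=u^\star$ by uniqueness), the map $P:u^\star\mapsto\hat u$ is a bounded linear idempotent on $\mathcal{X}$ with range $\mathcal{Z}_N$. In the nontrivial case $\{0\}\subsetneq\mathcal{Z}_N\subsetneq\mathcal{X}$ the elementary identity $\|\mathrm{Id}-P\|_{\mathfrak{L}(\mathcal{X})}=\|P\|_{\mathfrak{L}(\mathcal{X})}$ gives $\|u^\star-\hat u\|=\|(\mathrm{Id}-P)u^\star\|\le\|P\|\inf_{u\in\mathcal{Z}_N}\|u^\star-u\|$; and $\|P\|$ is controlled by the stability estimate applied with data $A(v,\cdot)$, namely $\|Pv\|\le\beta_{N,J}^{-1}\|A(v,\cdot)\|_{\mathcal{Y}'}\le(\gamma/\beta_{N,J})\|v\|\le\bigl(\gamma/(\beta\,\delta_{N,J}^{\rm test})\bigr)\|v\|$ by the previous paragraph. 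The two trivial cases ($\mathcal{Z}_N=\mathcal{X}$ and $\mathcal{Z}_N=\{0\}$) are immediate since there $\gamma/(\beta\,\delta_{N,J}^{\rm test})\ge 1$.

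I expect the main obstacle to be precisely obtaining the sharp constant in \eqref{eq:error_AMR}: the direct route — insert an arbitrary $u_N\in\mathcal{Z}_N$, bound $\|\hat u-u_N\|$ via $\beta_{N,J}$ using the test function $\Pi_{\mathcal{Y}_J}S(\hat u-u_N)$ together with the orthogonality above and $\delta_{N,J}^{\rm test}$, then apply the triangle inequality — yields only the weaker bound $\bigl(1+\gamma/(\beta\,\delta_{N,J}^{\rm test})\bigr)\inf_{u\in\mathcal{Z}_N}\|u^\star-u\|$; recognizing that $\hat u=Pu^\star$ for a projection $P$ and invoking $\|P\|=\|\mathrm{Id}-P\|$ is what removes the spurious additive term. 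A secondary technical point is just the bookkeeping of the interplay of $\Pi_{\mathcal{Y}_J}$, $\texttt{R}_{\mathcal{Y}}$ and $S$ when passing between the algebraic normal equations and the variational forms; once the identity $A(\hat u,\Pi_{\mathcal{Y}_J}S(w))=F(\Pi_{\mathcal{Y}_J}S(w))$ is established, the rest is routine.
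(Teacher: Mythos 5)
Your proposal is correct and follows essentially the same route as the paper: reformulate the approximate minimum residual problem as a Petrov--Galerkin system with test space spanned by the $\mathcal{Y}_J$-projected supremizers, derive existence/uniqueness and \eqref{eq:stability_AMR} from the reduced inf-sup constant, obtain \eqref{eq:error_AMR} via the idempotent-operator identity $\|P\|=\|\mathrm{Id}-P\|$ (the Xu--Zikatanov argument the paper cites), and close with the bound $\beta_{N,J}\geq \delta_{N,J}^{\rm test}\,\beta$ proved through the supremizing operator and the projection theorem. The explicit Euler--Lagrange derivation and the treatment of the trivial cases are only presentational differences.
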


\begin{proof}
We first observe that any solution to \eqref{eq:least_squares_ROM}  satisfies
(the proof is straightforward):
\begin{equation}
\label{eq:AMR_petrov_gal}
 {\rm find} \; \hat{u}  \in \mathcal{Z}_N :\; \;
A(\hat{u}, v) = F(v)
\qquad
\forall \, v \in \mathcal{Y}_{N,J}:= {\rm span} \{  \phi_n^{J}     \}_{n=1}^N,
\end{equation}
where 
$\phi_n^{J}$ satisfies $((\phi_n^{J}, v  )) = A(\zeta_n, v)$ for all $v \in \mathcal{Y}_J$, 
$n=1,\ldots,N$. Then, we observe that $A(\zeta, \eta) = A(\zeta, \Pi_{ \mathcal{Y}_{N,J}   }^{\mathcal{Y}} \eta  )$
for all $\zeta \in \mathcal{Z}_N$ and
$\eta \in \mathcal{Y}_J$, where $\Pi_{ \mathcal{Y}_{N,J}   }^{\mathcal{Y}}: \mathcal{Y} \to \mathcal{Y}_{N,J}$ denotes the projection operator on $\mathcal{Y}_{N,J} $ with respect to the $\mathcal{Y}$ norm. As a result, we find that the inf-sup constant $\beta_{N,J}^{\star} $  associated with \eqref{eq:AMR_petrov_gal}  satisfies
$$
\beta_{N,J}^{\star} = 
\inf_{w\in \mathcal{Z}_N\setminus \{ 0 \}   } \,\sup_{v\in \mathcal{Y}_{N,J} \setminus \{ 0 \}   }
\frac{A(w, v)}{\| w  \|  \vertiii{v}  }
= 
\inf_{w\in \mathcal{Z}_N\setminus \{ 0 \}   } \,\sup_{v\in \mathcal{Y}_J \setminus \{ 0 \}   }
\frac{A(w, v)}{\| w  \|  \vertiii{v}  }
=
\beta_{N,J}.
$$
In conclusion, exploiting a standard argument for inf-sup stable problems, we find that the solution $\hat{u}$ to \eqref{eq:AMR_petrov_gal} --- and thus  the solution to \eqref{eq:least_squares_ROM} ---   is unique and 
$\| \hat{{u}} \| \leq \frac{1}{\beta_{N,J}} \, \| F \|_{\mathcal{Y}'}$, which is \eqref{eq:stability_AMR}.

In order to prove  \eqref{eq:error_AMR}, 
we first exploit the argument in \cite{xu2003some} to show that
$$
\| \hat{u}  -  u^{\star} \|  \leq
\frac{\gamma}{  \beta_{N,J}} \,  
 \inf_{u\in \mathcal{Z}_N} \, \| u -  u^{\star} \|.
$$
Towards this end, we define the operator 
$\mathfrak{S}: \mathcal{X} \to \mathcal{Z}_N$, 
$ \mathfrak{S} (w) : =  {\rm arg} \min_{\zeta \in \mathcal{Z}_N} \|A(\zeta - w, \cdot)  \|_{\mathcal{Y}_J'}$.
Note that 
$\hat{u} = \mathfrak{S}(u^{\star})$.
 Clearly, $\mathfrak{S}(\zeta) = \zeta$ for all $\zeta \in \mathcal{Z}_N$: this implies that $\mathfrak{S}$ is idempotent, that is
$\mathfrak{S}(\mathfrak{S}(w)) = \mathfrak{S}(w)$ for all $w \in \mathcal{X}$.
Therefore, exploiting a standard  result in Functional Analysis (see, e.g.,  \cite{szyld2006many}), 
we have $\| \mathfrak{S}  \|_{\mathfrak{L}(\mathcal{X}, \mathcal{X})}  =
\| \mathbbm{1} - \mathfrak{S} \|_{\mathfrak{L}(\mathcal{X}, \mathcal{X})} $.
 Furthermore, recalling  \eqref{eq:stability_AMR},   we find
$$
\| \mathfrak{S}(w) \| \leq
\frac{1}{\beta_{N,J}} \| A(w, \cdot)\|_{\mathcal{Y}'}
\leq
\frac{\gamma}{\beta_{N,J}} \| w \|
\; \Rightarrow \;
\| \mathfrak{S}  \|_{\mathfrak{L}(\mathcal{X}, \mathcal{X})}
\leq
\frac{\gamma}{\beta_{N,J}}.
$$
In conclusion, we obtain, for any $\zeta \in \mathcal{Z}_N$, 
$$
\| u^{\star}  - \hat{u}       \| = 
\| ( \mathbbm{1} - \mathfrak{S}  )  u^{\star}         \|
=
\| ( \mathbbm{1} - \mathfrak{S}  )  (u^{\star}  - \zeta)       \|
\leq
\| \mathbbm{1} - \mathfrak{S} \|_{\mathfrak{L}(\mathcal{X}, \mathcal{X})}
\| u^{\star}  - \zeta      \|
\leq
\frac{\gamma}{\beta_{N,J}} \; \| u^{\star}  - \zeta     \|,
$$
which is the desired result. Note that in the second identity we used the fact that
$ ( \mathbbm{1} - \mathfrak{S} )  \zeta = 0$ for all $\zeta \in \mathcal{Z}_N$.

It remains to prove that
$\beta_{N,J} \geq 
\delta_{N,J}^{\rm test}  \beta$. 
Recalling   the definition of the supremizing operator $S$ and the projection theorem, we find
$$
\left\{
\begin{array}{ll}
\displaystyle{
\sup_{v \in \mathcal{Y}_J}  
  \frac{A(\zeta,v)}{   \vertiii{v}    }
  =
  \sup_{v \in \mathcal{Y}_J}  
  \frac{((  S(\zeta), v ))
   }{  \vertiii{v}   } 
  =
\vertiii{  
\Pi_{\mathcal{Y}_J}^{\mathcal{Y}}   S (\zeta) }}
 &
  \forall \, \zeta \in \mathcal{Z}_N;
\\[3mm]
\displaystyle{
\vertiii{
\Pi_{\mathcal{Y}_J}^{\mathcal{Y}}  s
} 
  \geq \delta_{N,J}^{\rm test} \; 
  \vertiii{s}
   }
&
  \forall \, s \in \mathcal{Y}_{N}^{\rm opt};
\\[3mm]
\displaystyle{
  \vertiii{S (\zeta)}
  \geq \beta  \; 
   \|  \zeta  \| 
   }
&
  \forall \, \zeta \in \mathcal{Z}_N.
\\
\end{array}
\right.
$$
Then, exploiting the 
previous estimates,   we find
$$
 \sup_{v \in \mathcal{Y}_J}  
  \frac{A(\zeta,v)}{    \vertiii{v} }
  =
    \vertiii{ 
 \Pi_{\mathcal{Y}_J}^{\mathcal{Y}}   S (\zeta)  }
  \geq
\delta_{N,J}^{\rm test}   
   \vertiii{ 
S (\zeta)  }
  \geq
  \beta \, 
\delta_{N,J}^{\rm test}     \|   \zeta \| , \qquad
\forall \, \zeta  \in \mathcal{Z}_N,
$$
which is the desired result.
\end{proof}

\section{Derivation of the accuracy constraints \eqref{eq:accuracy_constraint}}
\label{sec:BRR_EQ}
We illustrate  how to apply the Brezzi-Rappaz-Raviart (BRR, \cite{brezzi1980finite,caloz1997numerical}) theory to estimate the error between the solution $\widehat{U}^{\rm hf}$ to \eqref{eq:approx_minresROM_temp} and the solution 
$\widehat{U}$ to \eqref{eq:approx_minresROM},
$E^{\rm eq} = \|  \widehat{U}^{\rm hf} - \widehat{U} \| = 
\| \widehat{\boldsymbol{\alpha}}^{\rm hf} -  \widehat{\boldsymbol{\alpha}} \|_2$. 
We omit the dependence on $\mu$ for notational brevity. First, we present the following Lemma (see \cite[Lemma 3.1]{yano2019discontinuous}).

\begin{lemma}
\label{th:brr_theory}
We introduce the $C^1$ function $\boldsymbol{\mathcal{N}}: \mathbb{R}^N \to \mathbb{R}^N$, 
${\boldsymbol{\alpha}} \in \mathbb{R}^N$ such that the Jacobian $D \boldsymbol{\mathcal{N}}({\boldsymbol{\alpha}}) \in \mathbb{R}^{N,N}$ is non-singular, and constants $\epsilon,\gamma$ and $L(r)$ such that
\begin{equation}
\label{eq:BRR_hypotheses}
\|  \boldsymbol{\mathcal{N}} ( {\boldsymbol{\alpha}} ) \|_2 \leq \epsilon, \quad
\|  D \boldsymbol{\mathcal{N}} ( {\boldsymbol{\alpha}} )^{-1} \|_2 \leq \gamma, \quad
\sup_{\mathbf{w} :  \|\mathbf{w} - \boldsymbol{\alpha} \|_2 \leq r } \; \;
\|  D \boldsymbol{\mathcal{N}} ( \mathbf{w} )   -
D \boldsymbol{\mathcal{N}} ( {\boldsymbol{\alpha}} )   
 \|_2
\leq L(r).
\end{equation} 
Suppose that $2 \gamma L(2 \gamma \epsilon) \leq 1$. Then, for all $\beta \geq 2 \gamma \epsilon$ such that $\gamma L( \beta) < 1$, there exists a unique solution ${\boldsymbol{\alpha}}^{\star} $ that satisfies $\boldsymbol{\mathcal{N}} ( {\boldsymbol{\alpha}}^{\star}  ) = \mathbf{0}$ in the ball of radius $\beta$ centered in $\boldsymbol{\alpha}$. Furthermore, we have
\begin{equation}
\label{eq:BRR_estimate}
\| \boldsymbol{\alpha}^{\star} - \boldsymbol{\alpha}  \|_2
\leq
2 \gamma  \| \boldsymbol{\mathcal{N}} ( {\boldsymbol{\alpha}}^{\star}  )  \|_2.
\end{equation}
\end{lemma}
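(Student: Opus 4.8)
The plan is to recast $\boldsymbol{\mathcal{N}}(\cdot)=\mathbf{0}$ as a fixed-point problem for the frozen-Jacobian (chord) map
$$
T(\mathbf{w}) \;:=\; \mathbf{w} \;-\; D\boldsymbol{\mathcal{N}}(\boldsymbol{\alpha})^{-1}\,\boldsymbol{\mathcal{N}}(\mathbf{w}),
$$
which is well defined since $D\boldsymbol{\mathcal{N}}(\boldsymbol{\alpha})$ is non-singular and whose fixed points coincide exactly with the zeros of $\boldsymbol{\mathcal{N}}$. I would then apply the Banach fixed-point theorem to $T$ on a suitable closed ball $\overline{B}_r(\boldsymbol{\alpha})=\{\mathbf{w}:\|\mathbf{w}-\boldsymbol{\alpha}\|_2\le r\}$, using the three bounds in \eqref{eq:BRR_hypotheses} to control the contraction factor and the displacement of the centre.

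\textbf{Contraction and existence.} For $\mathbf{w}_1,\mathbf{w}_2\in\overline{B}_r(\boldsymbol{\alpha})$, writing $\boldsymbol{\mathcal{N}}(\mathbf{w}_1)-\boldsymbol{\mathcal{N}}(\mathbf{w}_2)=\big(\int_0^1 D\boldsymbol{\mathcal{N}}(\mathbf{w}_2+t(\mathbf{w}_1-\mathbf{w}_2))\,dt\big)(\mathbf{w}_1-\mathbf{w}_2)$ and comparing with $D\boldsymbol{\mathcal{N}}(\boldsymbol{\alpha})(\mathbf{w}_1-\mathbf{w}_2)$, convexity of the ball and the third bound in \eqref{eq:BRR_hypotheses} give $\|T(\mathbf{w}_1)-T(\mathbf{w}_2)\|_2\le\gamma L(r)\,\|\mathbf{w}_1-\mathbf{w}_2\|_2$. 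Taking $\mathbf{w}_2=\boldsymbol{\alpha}$ and using $\|T(\boldsymbol{\alpha})-\boldsymbol{\alpha}\|_2=\|D\boldsymbol{\mathcal{N}}(\boldsymbol{\alpha})^{-1}\boldsymbol{\mathcal{N}}(\boldsymbol{\alpha})\|_2\le\gamma\epsilon$ yields $\|T(\mathbf{w})-\boldsymbol{\alpha}\|_2\le\gamma L(r)\,r+\gamma\epsilon$ on $\overline{B}_r(\boldsymbol{\alpha})$. Choosing $r=r_0:=2\gamma\epsilon$ and invoking the standing assumption $2\gamma L(2\gamma\epsilon)\le 1$, i.e. $\gamma L(r_0)\le\tfrac12$, both $\gamma L(r_0)r_0+\gamma\epsilon\le\tfrac12 r_0+\tfrac12 r_0=r_0$ and $\gamma L(r_0)\le\tfrac12<1$ hold, so $T$ is a contraction from $\overline{B}_{r_0}(\boldsymbol{\alpha})$ into itself. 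Banach's theorem then provides a fixed point $\boldsymbol{\alpha}^\star\in\overline{B}_{r_0}(\boldsymbol{\alpha})$, hence a zero of $\boldsymbol{\mathcal{N}}$; since $r_0\le\beta$, this $\boldsymbol{\alpha}^\star$ lies in the ball of radius $\beta$, as claimed.

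\textbf{Uniqueness in the larger ball and the error estimate.} Uniqueness within $\overline{B}_\beta(\boldsymbol{\alpha})$ follows because the contraction estimate above also holds on $\overline{B}_\beta(\boldsymbol{\alpha})$ with factor $\gamma L(\beta)<1$ (the monotone $L(\beta)$ bounding the derivative oscillation there): any second zero $\widetilde{\boldsymbol{\alpha}}\in\overline{B}_\beta(\boldsymbol{\alpha})$ is a fixed point of $T$, so $\|\widetilde{\boldsymbol{\alpha}}-\boldsymbol{\alpha}^\star\|_2=\|T(\widetilde{\boldsymbol{\alpha}})-T(\boldsymbol{\alpha}^\star)\|_2\le\gamma L(\beta)\|\widetilde{\boldsymbol{\alpha}}-\boldsymbol{\alpha}^\star\|_2$, forcing $\widetilde{\boldsymbol{\alpha}}=\boldsymbol{\alpha}^\star$. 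For the a priori bound, decompose $\boldsymbol{\alpha}^\star-\boldsymbol{\alpha}=\big(T(\boldsymbol{\alpha}^\star)-T(\boldsymbol{\alpha})\big)+\big(T(\boldsymbol{\alpha})-\boldsymbol{\alpha}\big)$ and apply the contraction bound on $\overline{B}_{r_0}$ together with $\|T(\boldsymbol{\alpha})-\boldsymbol{\alpha}\|_2\le\gamma\|\boldsymbol{\mathcal{N}}(\boldsymbol{\alpha})\|_2$, obtaining $(1-\gamma L(r_0))\|\boldsymbol{\alpha}^\star-\boldsymbol{\alpha}\|_2\le\gamma\|\boldsymbol{\mathcal{N}}(\boldsymbol{\alpha})\|_2$; since $1-\gamma L(r_0)\ge\tfrac12$ this gives $\|\boldsymbol{\alpha}^\star-\boldsymbol{\alpha}\|_2\le 2\gamma\|\boldsymbol{\mathcal{N}}(\boldsymbol{\alpha})\|_2$, which is \eqref{eq:BRR_estimate}.

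\textbf{Main difficulty.} The argument is a textbook instance of the Newton--Kantorovich/BRR scheme, so the only real care point is the interplay between the two radii: one must run the existence part on the \emph{small} ball $r_0=2\gamma\epsilon$, where the hypothesis $2\gamma L(2\gamma\epsilon)\le1$ simultaneously secures the self-map property and a contraction factor at most $\tfrac12$ (precisely what produces the constant $2\gamma$ in the estimate), and only afterwards propagate uniqueness to the \emph{large} ball of radius $\beta$ using merely $\gamma L(\beta)<1$ and the monotonicity of $r\mapsto L(r)$ built into its definition as a supremum over $\overline{B}_r(\boldsymbol{\alpha})$. Aligning these radii and the associated constants ($\tfrac12$ versus $1$) is the step that requires attention; everything else is routine.
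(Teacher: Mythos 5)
Your proof is correct; the paper itself does not prove this lemma but simply cites it as Lemma 3.1 of \cite{yano2019discontinuous}, and your chord-map (frozen-Jacobian) contraction argument is the standard Newton--Kantorovich/BRR derivation of exactly that result, with the radii $r_0=2\gamma\epsilon$ and $\beta$ handled correctly. One remark: as printed, \eqref{eq:BRR_estimate} has $\boldsymbol{\mathcal{N}}(\boldsymbol{\alpha}^{\star})$ on the right-hand side, which is vacuous since $\boldsymbol{\mathcal{N}}(\boldsymbol{\alpha}^{\star})=\mathbf{0}$; you have (rightly) proved the intended bound $\|\boldsymbol{\alpha}^{\star}-\boldsymbol{\alpha}\|_2\leq 2\gamma\|\boldsymbol{\mathcal{N}}(\boldsymbol{\alpha})\|_2$, which is the form actually used in the discussion following the lemma.
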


We apply Lemma \ref{th:brr_theory} to analyze the quadrature error $E^{\rm eq}$. Towards this end, we define 
\begin{equation}
\label{eq:BRR_specialized}
\boldsymbol{\mathcal{N}} ( {\boldsymbol{\alpha}} )
=
\frac{1}{2} \nabla \; \|  \mathbf{R}_{N,J}^{\rm hf}( \boldsymbol{\alpha}  )    \|_2^2
=
 \left( 
\mathbf{J}_{N,J}^{\rm hf}( \boldsymbol{\alpha}  )
\right)^T 
\mathbf{R}_{N,J}^{\rm hf}( \boldsymbol{\alpha}  ).
\end{equation}
Clearly, any $\boldsymbol{\alpha}^{\star}$ satisfying 
$\boldsymbol{\mathcal{N}} ( {\boldsymbol{\alpha}}^{\star} )
= \mathbf{0}$ is a stationary point of the objective function in \eqref{eq:approx_minresROM_temp}: as a result, if $\widehat{\boldsymbol{\alpha}}$ satisfies the hypotheses of Lemma \ref{th:brr_theory} with 
$\boldsymbol{\mathcal{N}}$ as in \eqref{eq:BRR_specialized},
there exists a unique solution $\widehat{\boldsymbol{\alpha}}^{\star}$ such that 
$\boldsymbol{\mathcal{N}} ( {\boldsymbol{\alpha}}^{\star} )
= \mathbf{0}$ in a neighborhood of $\widehat{\boldsymbol{\alpha}}$ and
$
\|  \widehat{\boldsymbol{\alpha}} - \widehat{\boldsymbol{\alpha}}^{\star}\|_2
\leq 2 \gamma 
\| 
\boldsymbol{\mathcal{N}} (
\widehat{\boldsymbol{\alpha}} 
)  \|_2$.

Since
$\mathbf{J}_{N,J}^{\rm eq}( \widehat{\boldsymbol{\alpha}}  )^T 
\mathbf{R}_{N,J}^{\rm eq}( \widehat{\boldsymbol{\alpha}}  )
=
\mathbf{0}$, by straightforward manipulations, we find that
$$
\|  \boldsymbol{\mathcal{N}} (\widehat{\boldsymbol{\alpha}} )  \|_2
\leq
\underbrace{
\|
\mathbf{J}_{N,J}^{\rm hf}( \widehat{\boldsymbol{\alpha}}  )
-
\mathbf{J}_{N,J}^{\rm eq}( \widehat{\boldsymbol{\alpha}}  )
\|_2
}_{=: \rm (I)}
\|
\mathbf{R}_{N,J}^{\rm eq}( \widehat{\boldsymbol{\alpha}}  )
\|_2
\; + \;
\underbrace{
\|
\mathbf{J}_{N,J}^{\rm hf}( \widehat{\boldsymbol{\alpha}}  )^T
\left(
\mathbf{R}_{N,J}^{\rm hf}( \widehat{\boldsymbol{\alpha}}  )
-
\mathbf{R}_{N,J}^{\rm eq}( \widehat{\boldsymbol{\alpha}}  )
\right)
\|_2
}_{=: \rm (II)}.
$$
This estimate shows that the residual $\|  \boldsymbol{\mathcal{N}} (\widehat{\boldsymbol{\alpha}} )  \|_2$ is controlled by the quadrature errors
(I) and (II). 
Note that (II) corresponds to  the accuracy constraint \eqref{eq:accuracy_constraint}; on the other hand, we choose to exclude the constraints associated with the Jacobian. The reason is twofold: first, controlling (I) requires $N J n_{\rm train}$ additional constraints and is thus expensive for offline calculations;  second, (I) is multiplied by  the empirical residual
$\|
\mathbf{R}_{N,J}^{\rm eq}( \widehat{\boldsymbol{\alpha}}  )
\|_2$, which is expected to be small for $J = \mathcal{O}(N)$.

\section{Further investigations on data compression}
\label{sec:data_compression_vis}
\subsection{Burgers equation}

We investigate the compressibility of the manifold
$\mathcal{M}_{\rm space} = \{ U_{\mu}(t) : t \in (0,T) , \; \mu \in \mathcal{P} \} \subset L^2(0,L)$, which needs to be  approximated  in  time-marching ROMs. 
Towards this end, we assess performance of  POD in the unregistered and in the registered case; for simplicity, we here restrict ourselves to the case $\mathcal{P}  = \{ \bar{\mu} \}$, $\bar{\mu}  = [1,0.25]$.
Figure     \ref{fig:ALE_burgers_unreg}  shows the behavior of $U_{\mu}(t) $ and the projection
$\Pi_{\mathcal{Z}_N}  U_{\mu}(t)$ for two time instants. Here, 
$\mathcal{Z}_N$ is the $N=20$-dimensional POD space built based on $n_{\rm train}=200$ temporal snapshots associated with 
the equispaced sampling times $\{ t_{\rm s}^k \}_{k=1}^{n_{\rm train}}$. As expected,  linear methods are extremely inefficient to capture shock waves: the projection error is indeed significant despite the relatively-large  number of retained modes.

%burger: spatial  linear compression 
\begin{figure}[h!]
\centering
 \subfloat[$t=0.05$] 
{  \includegraphics[width=0.4\textwidth]
 {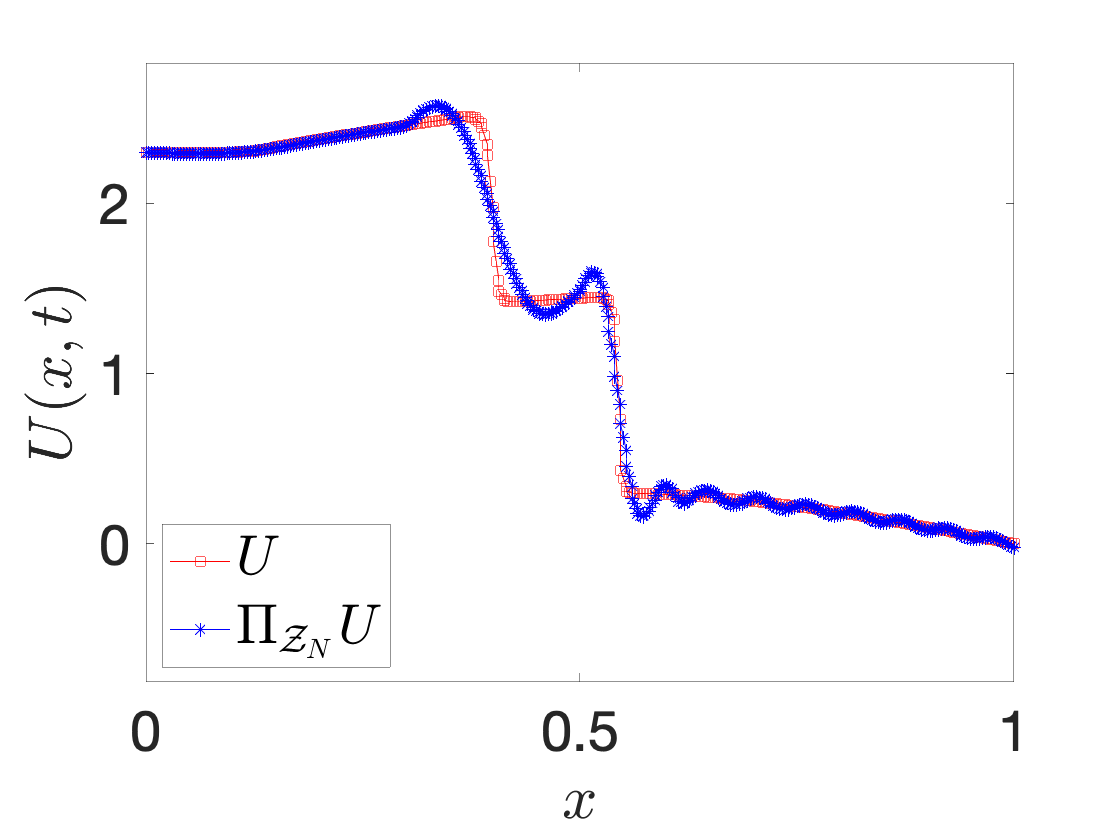}}
  ~~
 \subfloat[$t=0.3$] 
{  \includegraphics[width=0.4\textwidth]
 {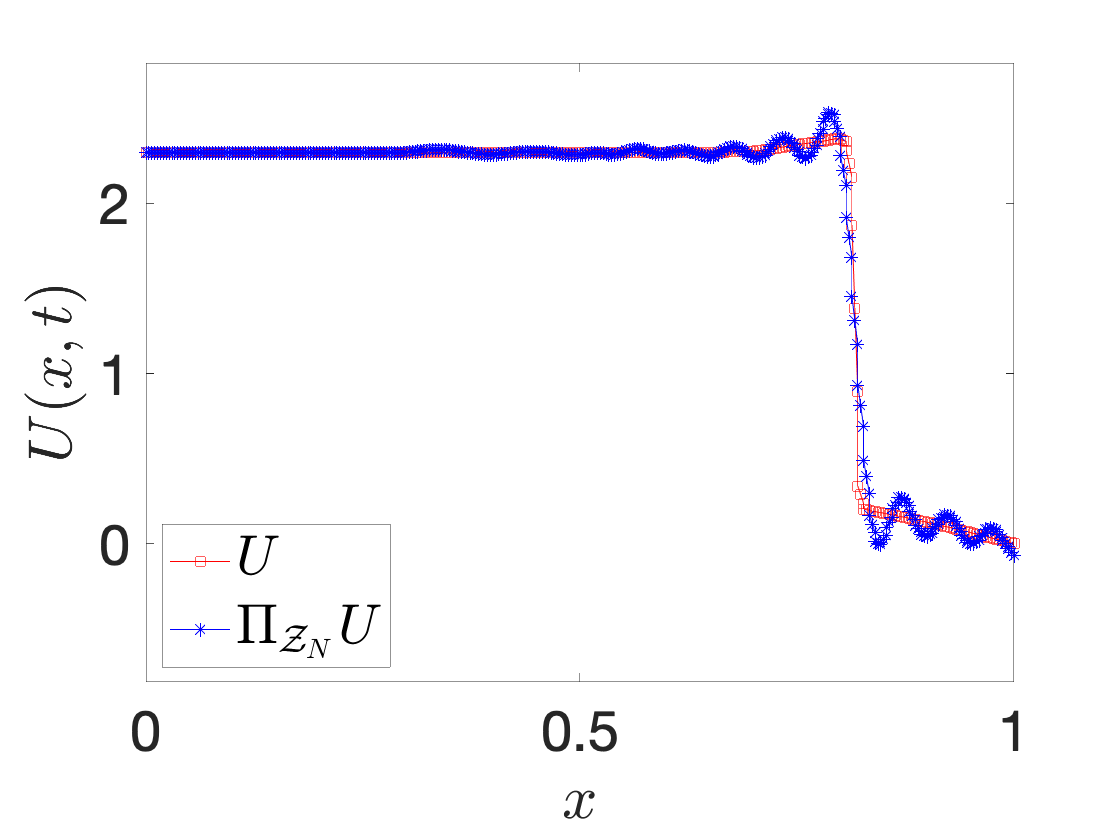}}
 
\caption{Burgers equation; performance of  spatial linear compression for $\mu=[1,0.25]$ (unregistered case, $N=20$). }
\label{fig:ALE_burgers_unreg}
\end{figure}

In Figures \ref{fig:ALE_burgers_reg},   \ref{fig:ALE_burgers_reg2} and \ref{fig:ALE_burgers_reg3}, 
we investigate performance of spatial registration.
 Here, the mapping is generated based on  
 $n_{\rm train}=200$ snapshots through Algorithm \ref{alg:registration} with $N_{\rm max}=5$, $\xi=10^{-2}$, $M_{\rm hf}=100$, $tol_{\rm pod}=10^{-4}$.
 We further consider $\mathcal{T}_{N_0=2} = {\rm span} \{ U_{\mu}(0),  U_{\mu}(T)\}$  as  initial template space; the resulting map consists of a four-term expansion ($M=4$). 
  In Figure \ref{fig:ALE_burgers_reg}, we show 
the behavior of $\widetilde{U}_{\mu}(t) $ and the projection $\Pi_{\mathcal{Z}_N} \widetilde{U}_{\mu}(t)$, 
 where $\mathcal{Z}_N$ is the $N=20$-dimensional POD space built based on  the  mapped snapshots.
 In Figure \ref{fig:ALE_burgers_reg2}(a), we compare the behavior of the normalized POD eigenvalues with and without registration; similarly, in Figure \ref{fig:ALE_burgers_reg2}(b),
 we show the in-sample projection error 
$E^{\rm bf,\infty} =
\max_{j=1,\ldots,n_{\rm train}}
\; E^{\rm bf}(t_{\rm s}^k),$
for registered and unregistered configurations 
(cf. \eqref{eq:best_fit_error}).
 We observe that registration improves performance of POD for this model problem. 
 
Figure \ref{fig:ALE_burgers_reg3}, which depicts the behavior of the physical and mapped solution for two time steps, shows that the mapping has the effect of ``squeezing" the transition from one shock to zero shock by artificially increasing the wave speed. In the framework of projection-based ROMs, this poses serious issues for the numerical temporal integration.

%burger 2: performance of  spatial  compression
\begin{figure}[h!]
\centering
 \subfloat[$t=0.05$] 
{  \includegraphics[width=0.4\textwidth]
 {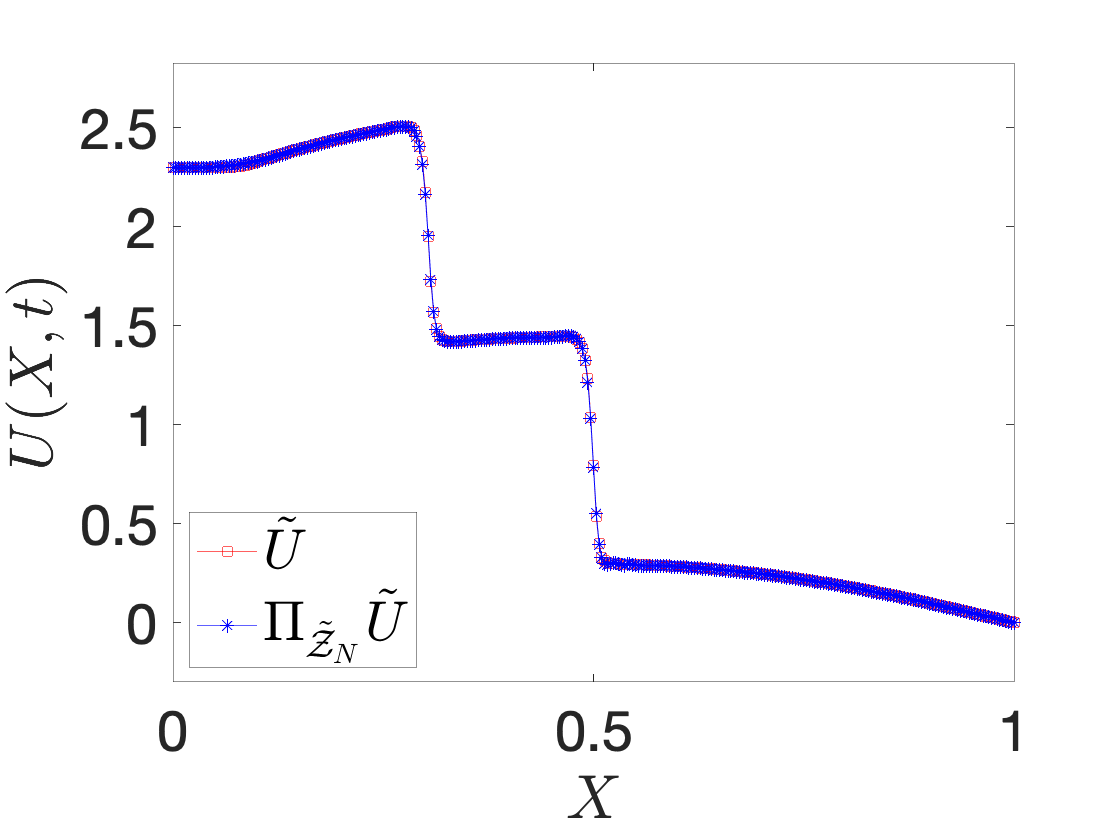}}
  ~~
 \subfloat[$t=0.3$] 
{  \includegraphics[width=0.4\textwidth]
 {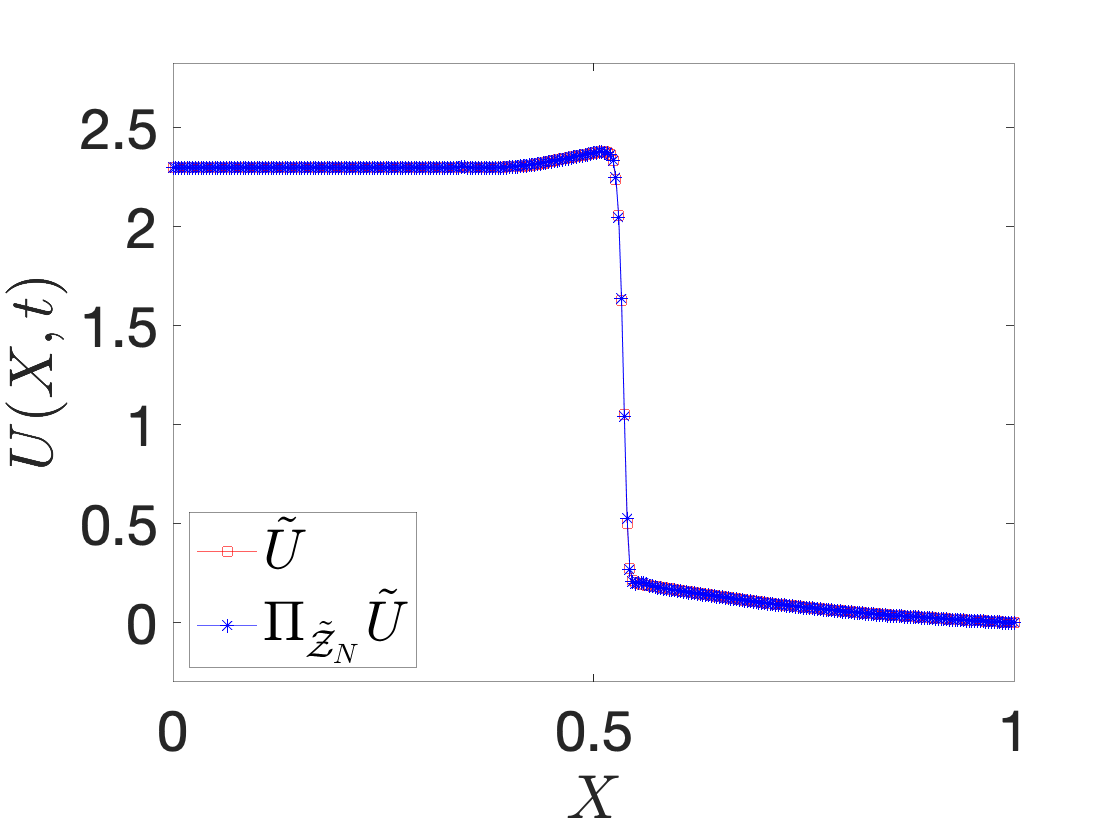}}
 
 \caption{Burgers equation; performance of  spatial  compression for $\mu=[1,0.25]$ with spatial registration ($N=20$). }
 \label{fig:ALE_burgers_reg}
  \end{figure}  
  
%burger 3: performance of  spatial  compression  
\begin{figure}[h!]
\centering
 \subfloat[] 
{  \includegraphics[width=0.4\textwidth]
 {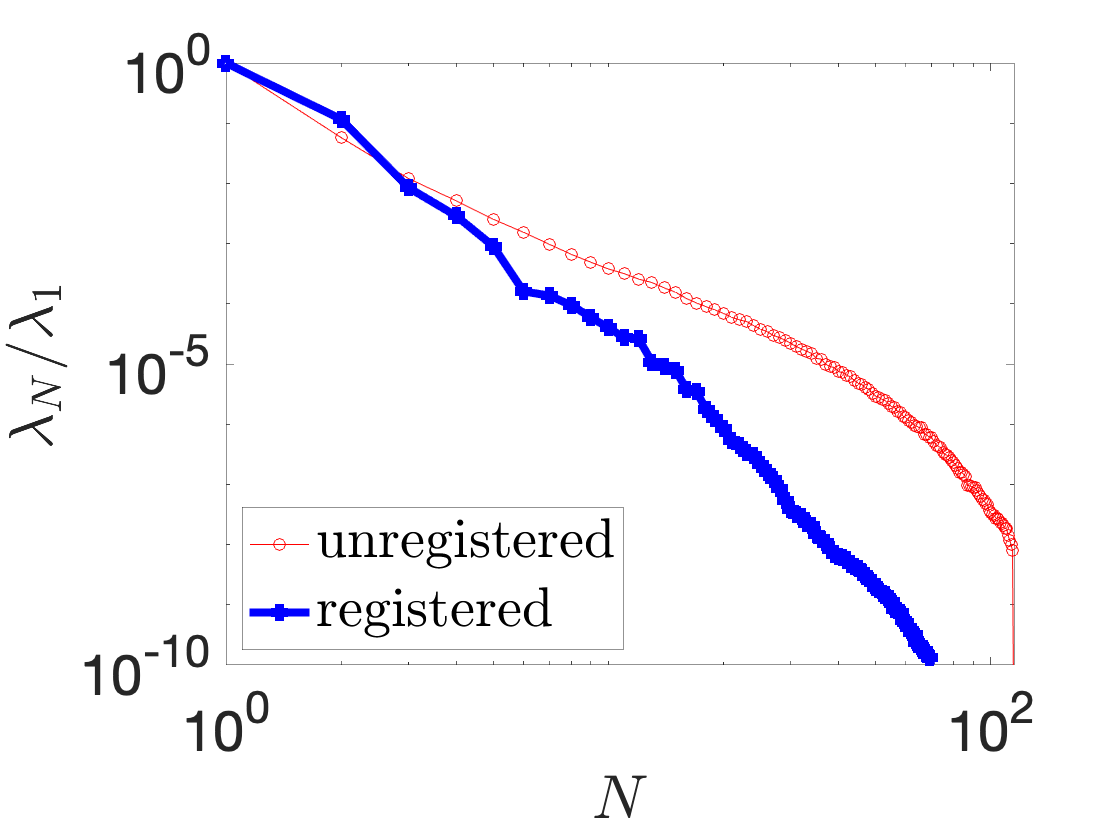}}
  ~~
 \subfloat[] 
{  \includegraphics[width=0.4\textwidth]
 {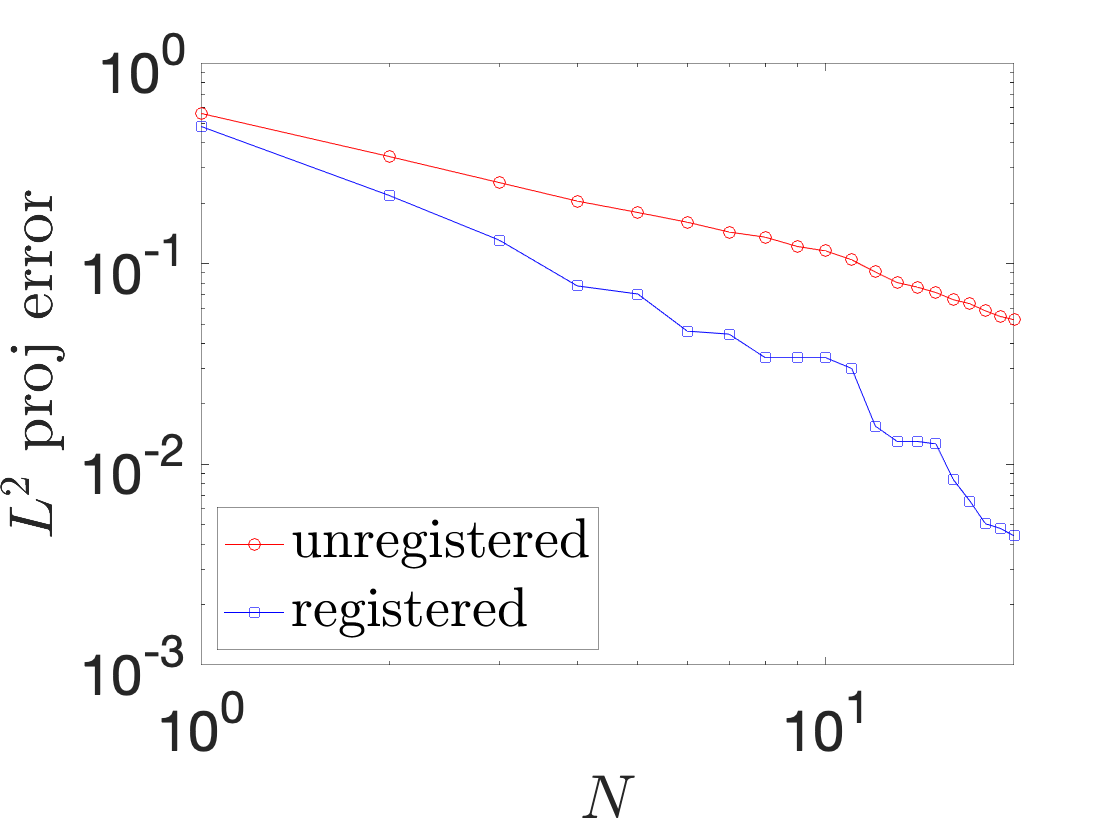}}
  
 \caption{Burgers equation;  compression for $\mu=[1,0.25]$ with spatial registration.
 (a)  behavior of normalized POD eigenvalues associated with the unregistered and registered temporal snapshots.
 (b)  behavior of the maximum in-sample  projection error $E^{\rm bf, \infty}$.
  }
 \label{fig:ALE_burgers_reg2}
  \end{figure}  
  
%burgers ALE
\begin{figure}[h!]
\centering
  \subfloat[$t=0.4$] 
{  \includegraphics[width=0.4\textwidth]
 {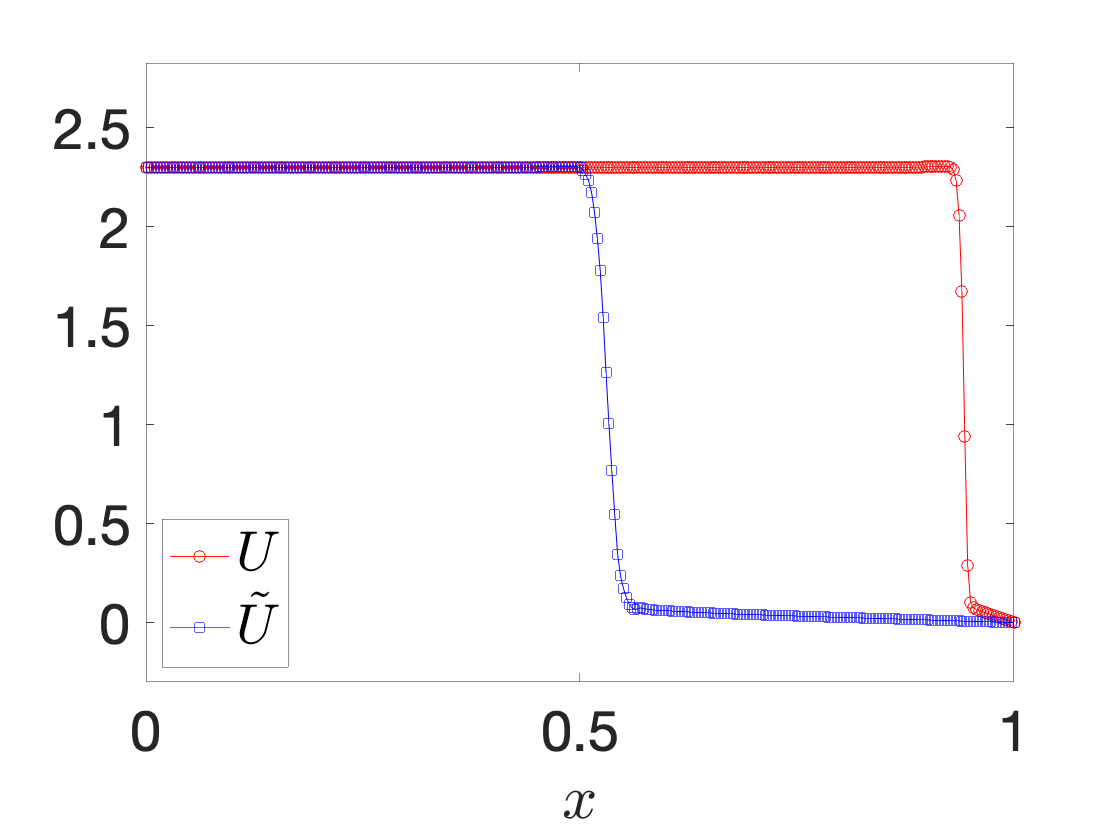}}
  ~~
 \subfloat[$t=0.45$] 
{  \includegraphics[width=0.4\textwidth]
 {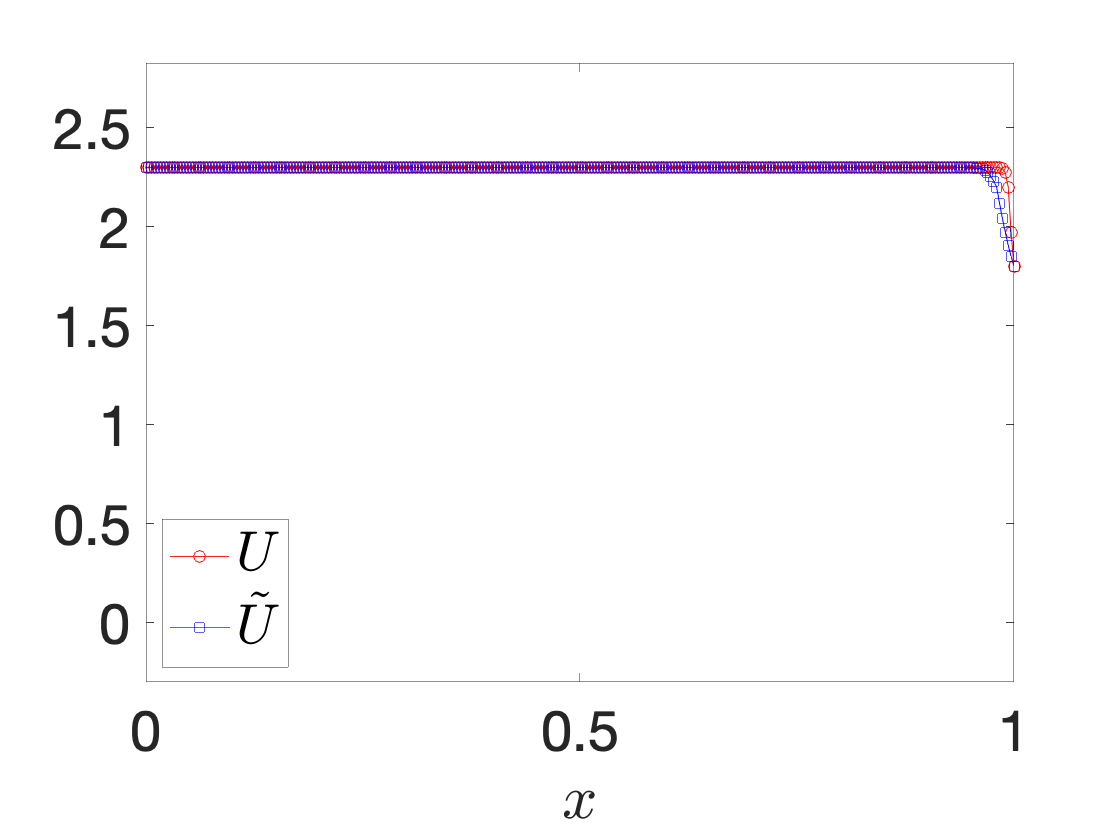}} 
 
 \caption{Burgers equation;
spatial  compression for $\mu=[1,0.25]$ with spatial registration. 
Behavior  of $U_{\mu}$ and  $\widetilde{U}_{\mu}$ for two time instants. }
 \label{fig:ALE_burgers_reg3}
  \end{figure}  
  
In Figure \ref{fig:spacetimereg_burgers2}, we show the unregistered and registered solution fields for two values of the parameter and for three 
horizontal slices of $\Omega$. 
In the unregistered case, these slices correspond to the solution for three time instants.
  We observe that  space-time registration is 
 able to nearly  ``freeze" the position of the jump discontinuities with respect to parameter. These results suggest that the self-similar structures of the present problem  can only be captured by considering the space-time behavior of the solution field.

% \label{fig:spacetimereg_burgers2}
\begin{figure}[h!]
\centering
\subfloat[$t=0.05$] 
{  \includegraphics[width=0.32\textwidth]
 {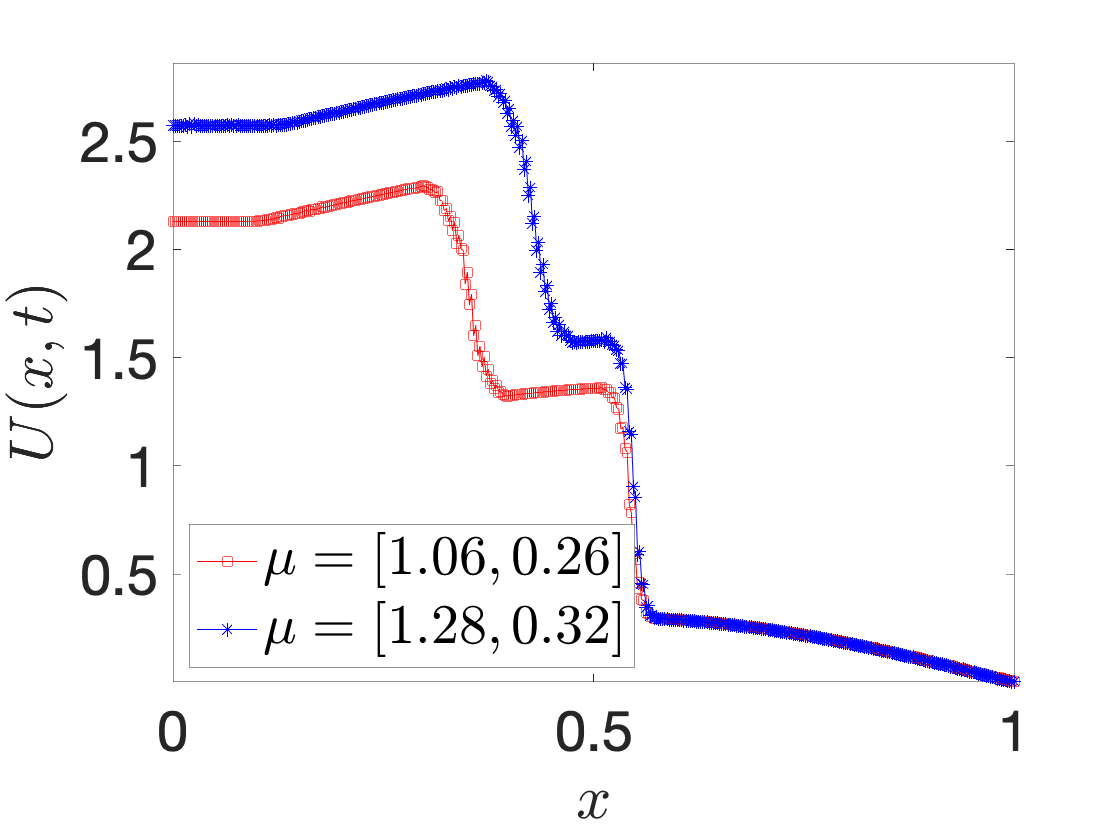}}
  ~~
 \subfloat[$t=0.3$] 
{  \includegraphics[width=0.32\textwidth]
 {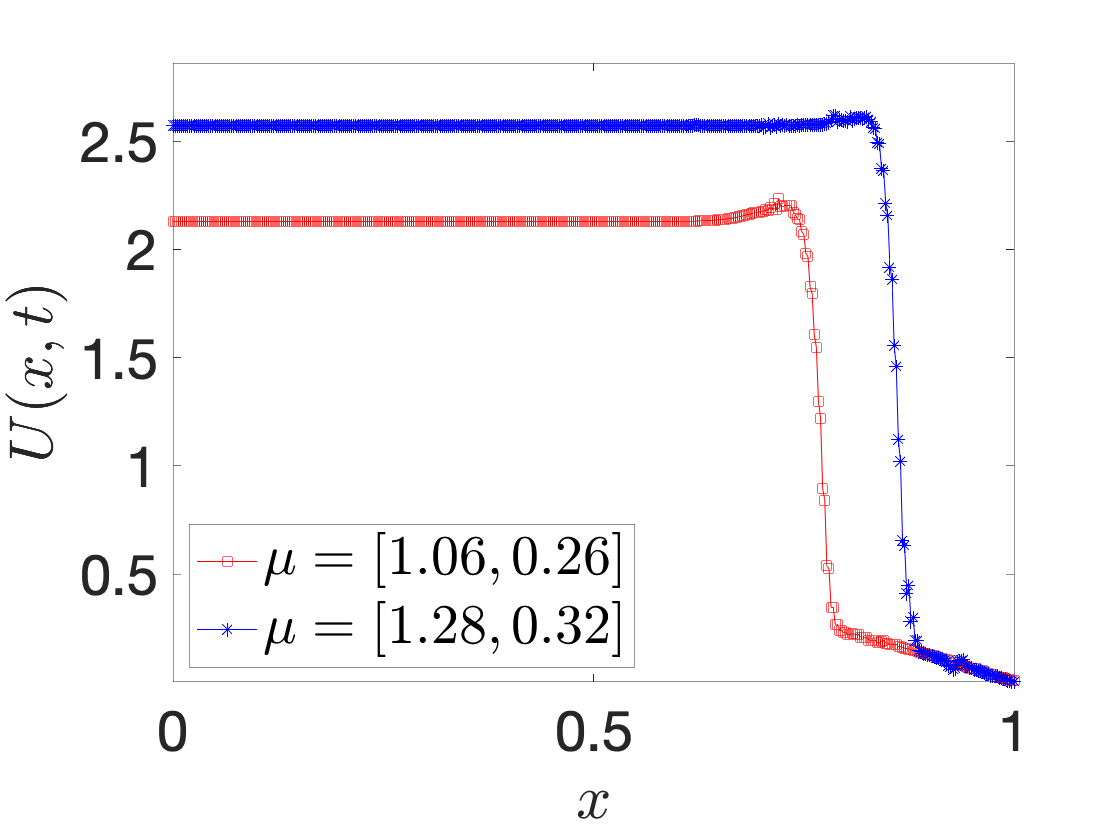}}
   ~~
 \subfloat[$t=0.8$] 
{  \includegraphics[width=0.32\textwidth]
 {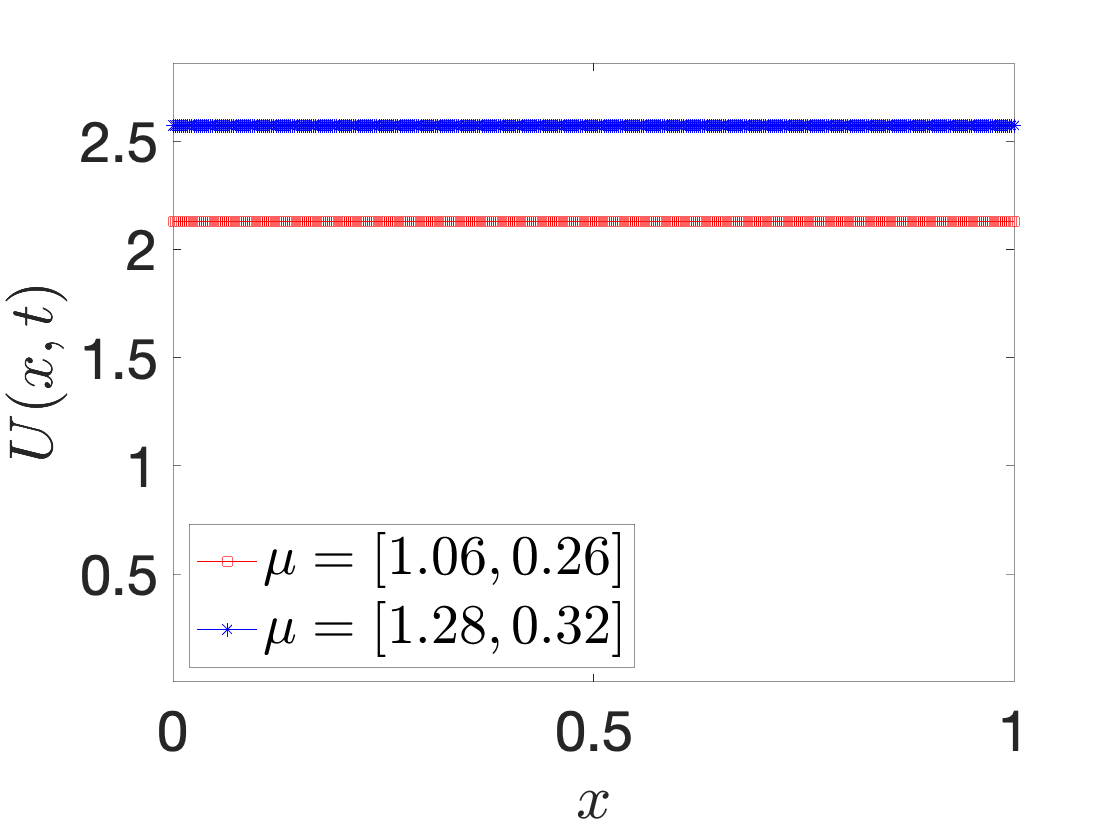}}
 
\subfloat[$X_2=0.05$] 
{  \includegraphics[width=0.32\textwidth]
 {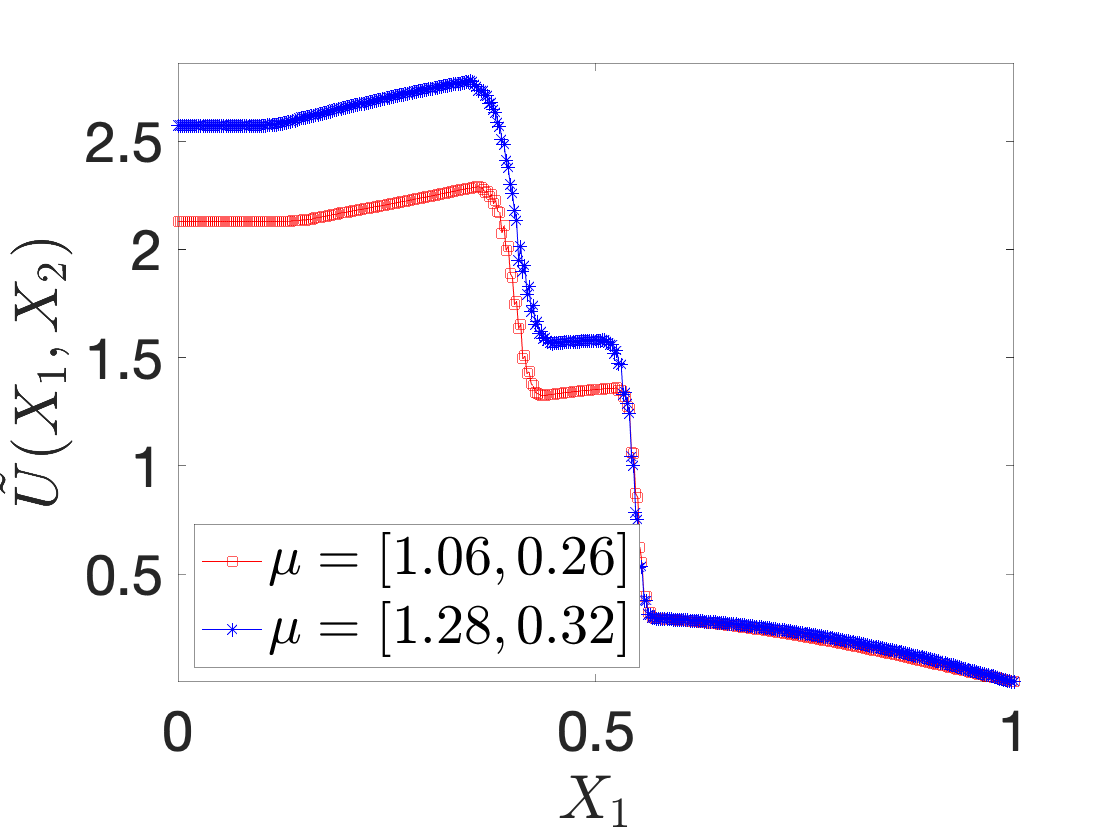}}
  ~~
 \subfloat[$X_2=0.3$] 
{  \includegraphics[width=0.32\textwidth]
 {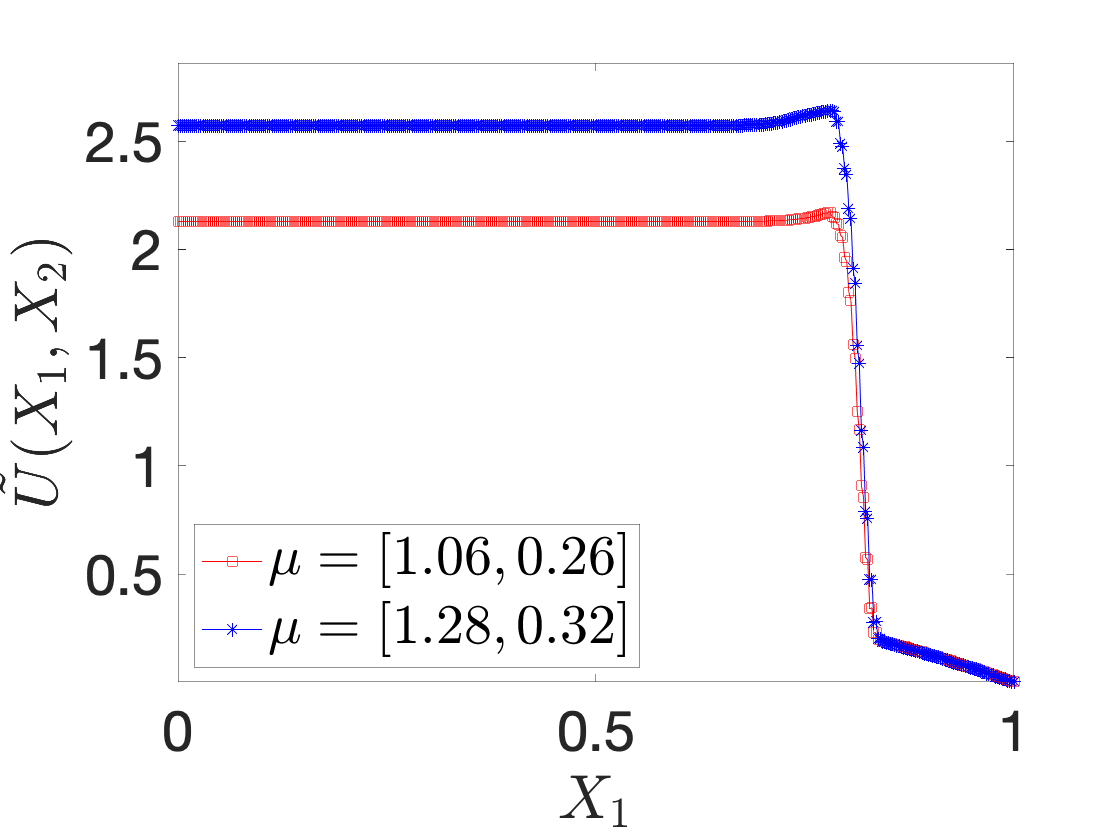}}
   ~~
 \subfloat[$X_2=0.8$] 
{  \includegraphics[width=0.32\textwidth]
 {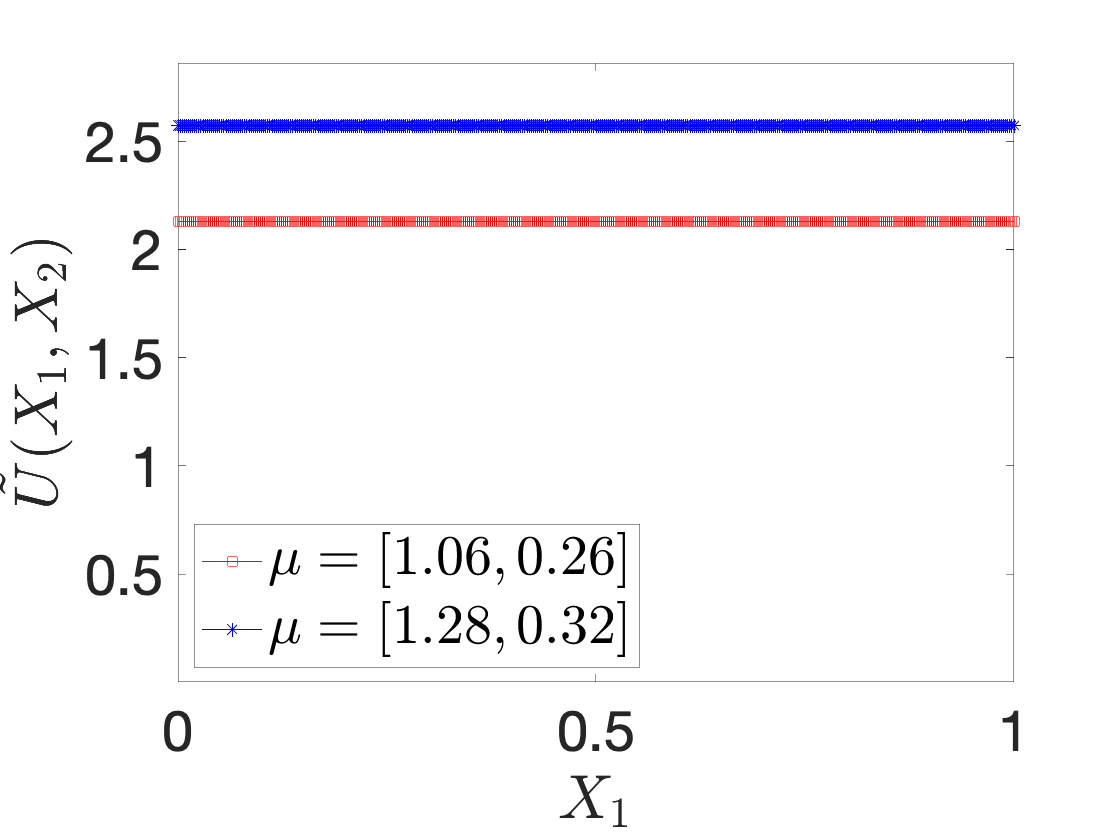}}
  
\caption{Burgers equation;  space-time registration.
Behavior of the solution field for two values of $\mu$ and three values of the second coordinate.
(a-b-c)  behavior in physical domain for $t=0.05,0.3,0.8$.
(d-e-f)  behavior  in  reference domain for
$X_2=0.05,0.3,0.8$.
}
 \label{fig:spacetimereg_burgers2}
  \end{figure}

\subsection{Shallow water equations}
 
We present further investigations of the   
  space-time registration for the shallow water equations.   
  Figure \ref{fig:spacetimereg_sv2} shows the unregistered and registered free surface $z$ for two values of the parameter and for three horizontal slices of $\Omega$. As for the previous model problem, the registration procedure is able to nearly fix the position of the travelling wave with respect to parameter.

 %Shallow water equations;  space-time registration (2)
 \begin{figure}[h!]
\centering
\subfloat[$t=0.4$] 
{  \includegraphics[width=0.32\textwidth]
 {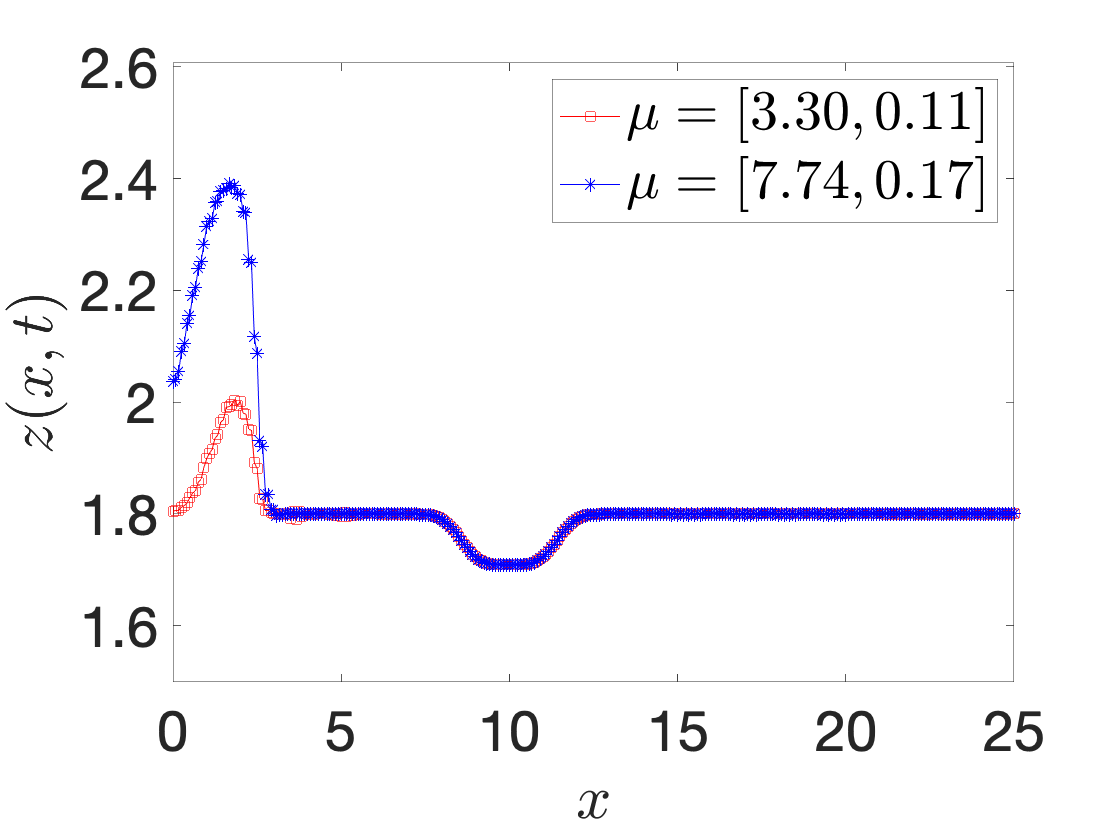}}
  ~~
 \subfloat[$t=1.5$] 
{  \includegraphics[width=0.32\textwidth]
 {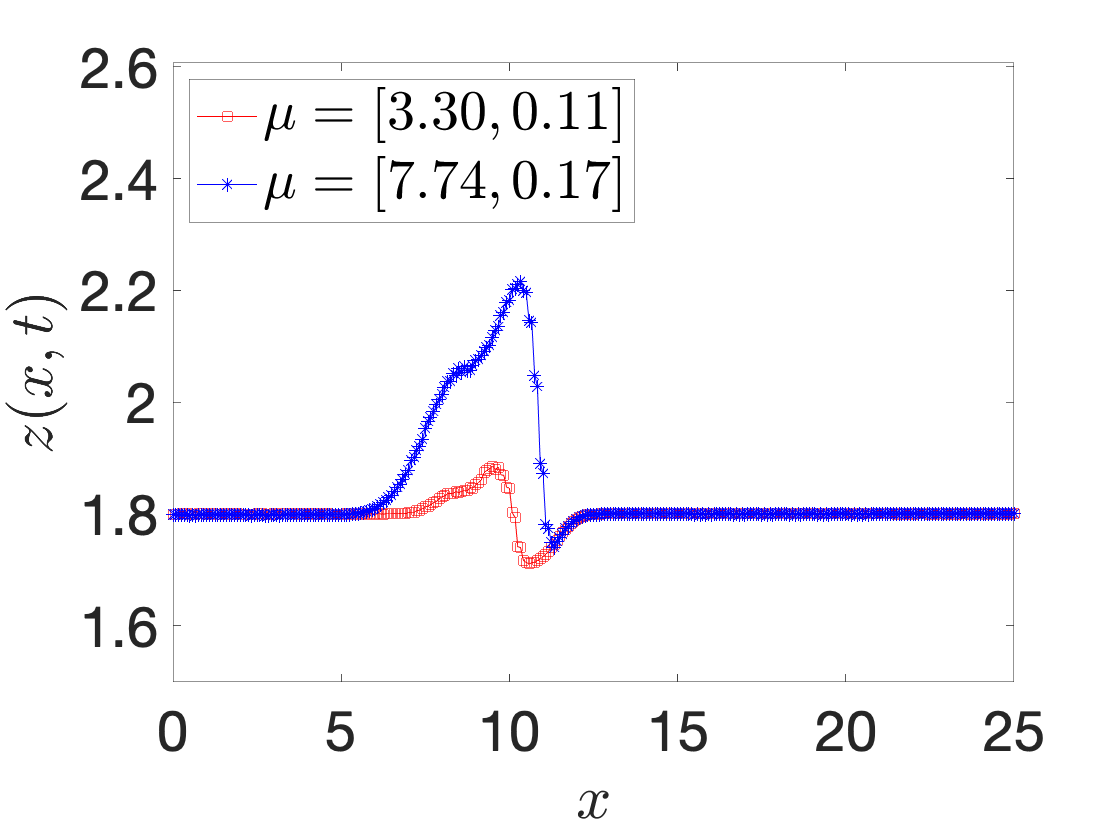}}
   ~~
 \subfloat[$t=3$] 
{  \includegraphics[width=0.32\textwidth]
 {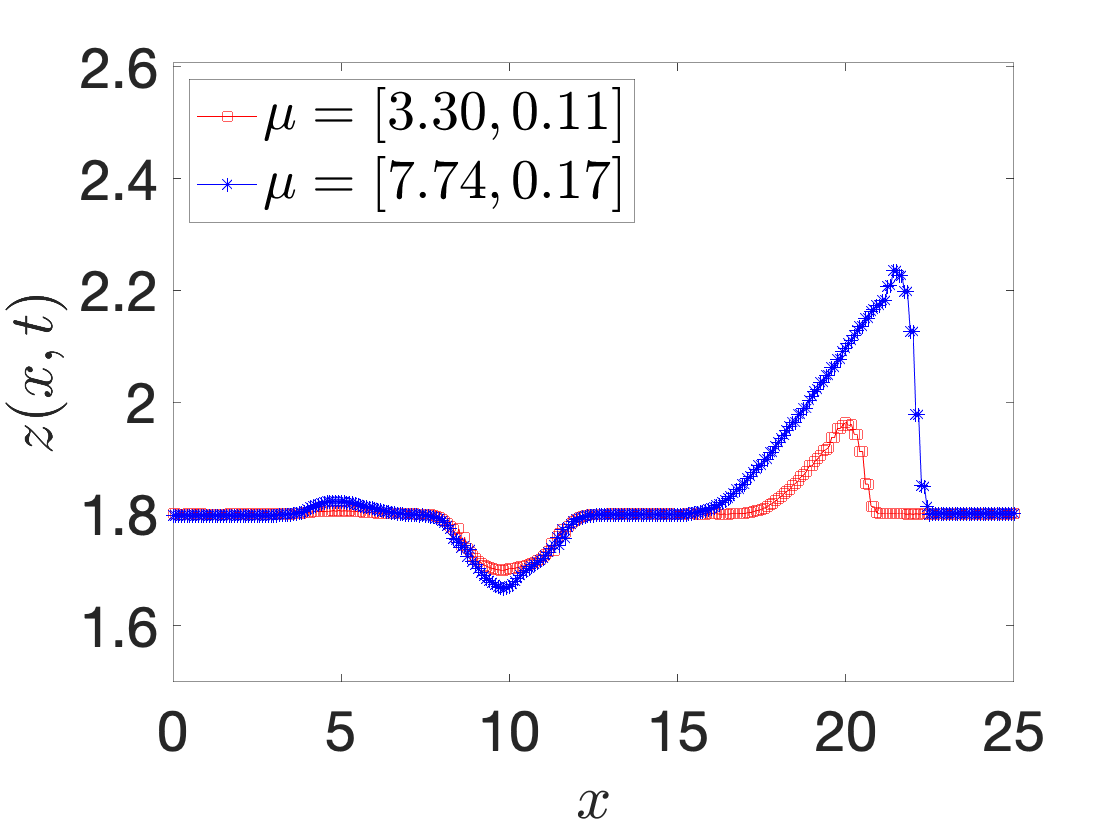}}
 
\subfloat[$X_2=0.4$] 
{  \includegraphics[width=0.32\textwidth]
 {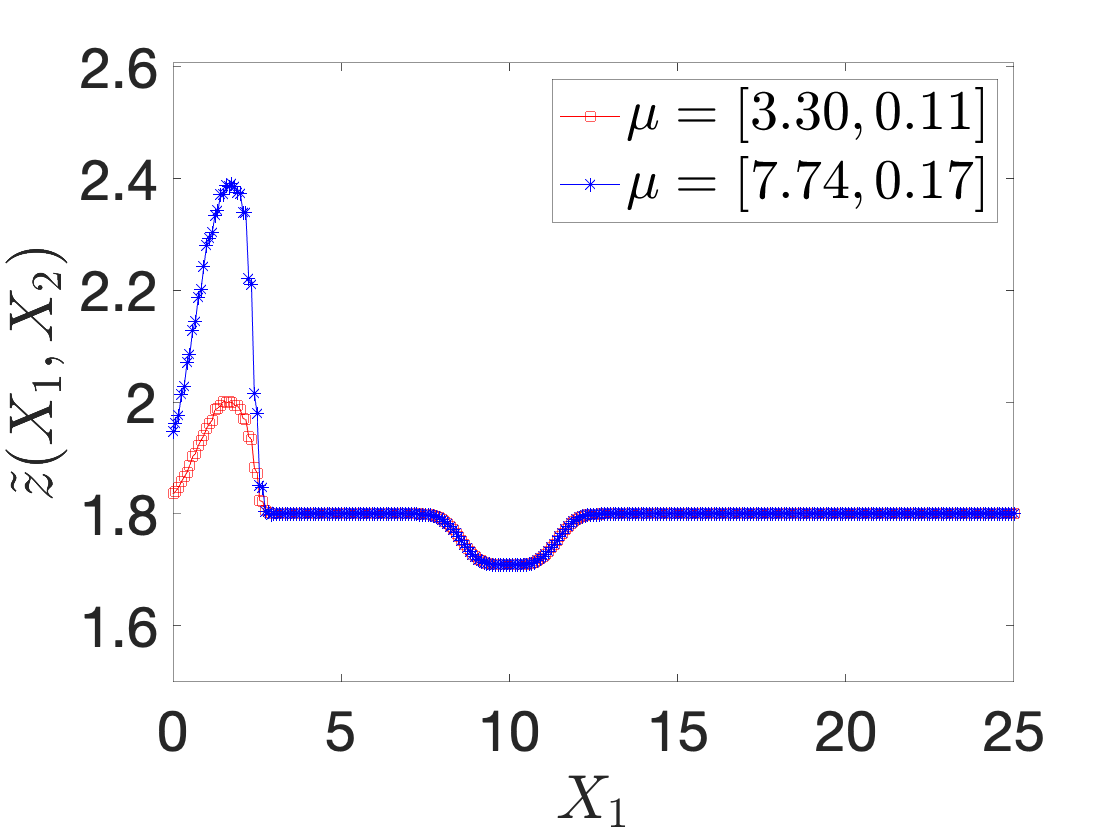}}
  ~~
 \subfloat[$X_2=1.5$] 
{  \includegraphics[width=0.32\textwidth]
 {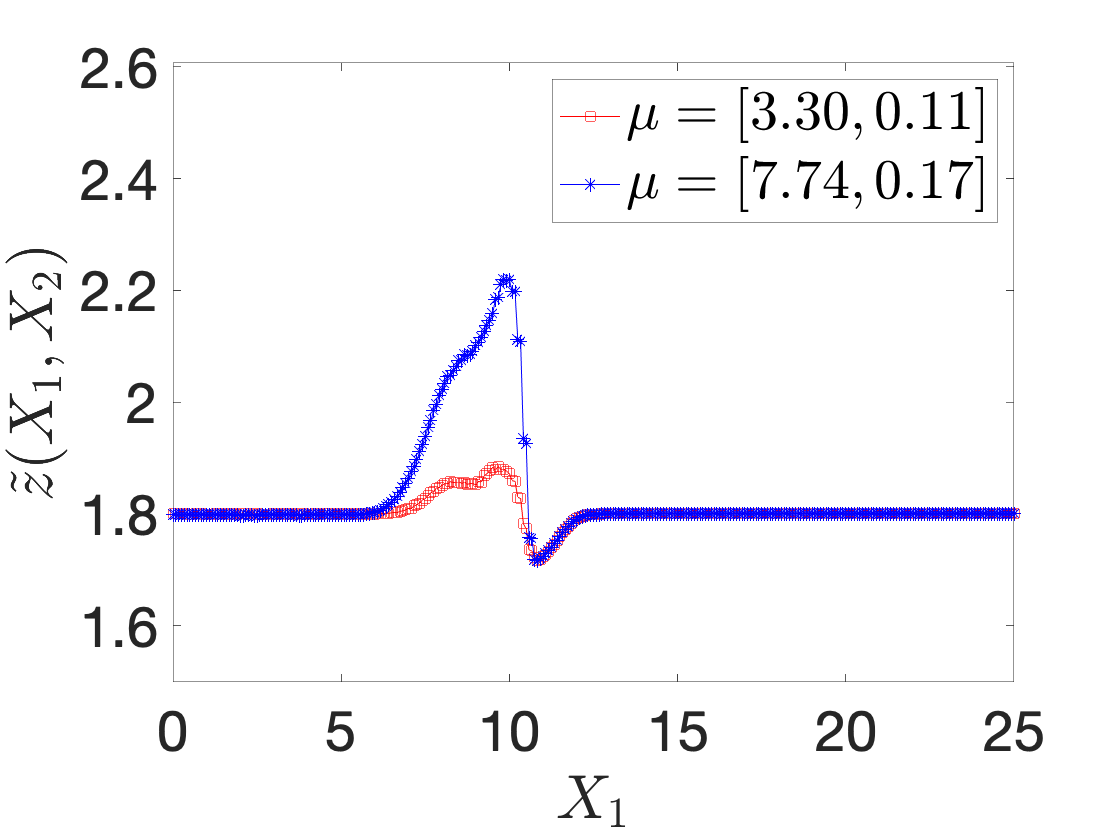}}
   ~~
 \subfloat[$X_2=3$] 
{  \includegraphics[width=0.32\textwidth]
 {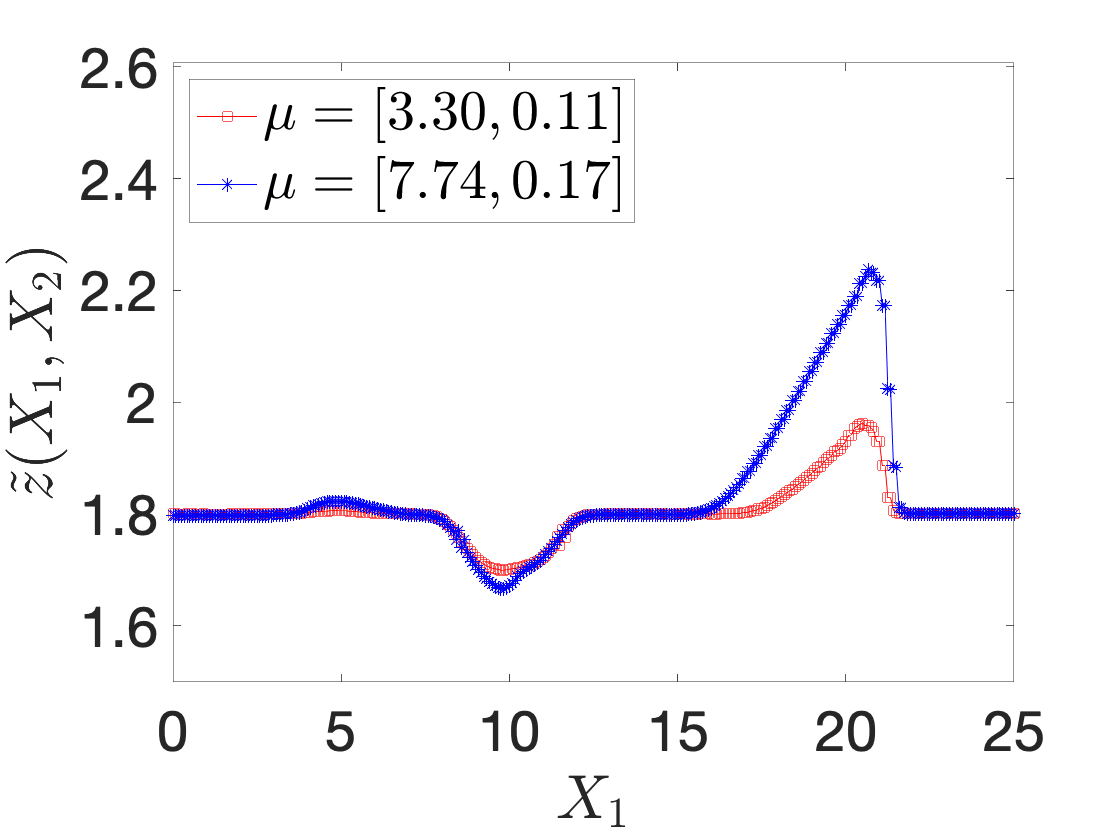}}
  
\caption{Shallow water equations;  space-time registration.
Behavior of the free surface $z$  for two values of $\mu$ and three values of the second coordinate.
(a-b-c) behavior in physical domain for $t=0.4,1.5,3$.
 (d-e-f): behavior  in reference domain for $X_2=0.4,1.5,3$.
}
 \label{fig:spacetimereg_sv2}
  \end{figure}

In Figure \ref{fig:spacetimereg_sv3}, we investigate the optimal reconstruction properties of the proposed approximation. Given $U_{\mu} \in \mathcal{M}$ and the POD space 
$\mathcal{Z}_{N} = {\rm span} \{  \zeta_n\}_{n=1}^N \subset \mathcal{X}$ obtained based on the mapped snapshots $\{ \widetilde{U}_{\mu^k}  \}_{k=1}^{n_{\rm train}}$, we define 
\begin{equation}
\label{eq:Uopt}
\widehat{U}_{\mu}^{\rm opt} := \Pi_{\mathcal{Z}_{N,\mu}}  {U}_{\mu},
\quad
{\rm where} \; 
\mathcal{Z}_{N,\mu} = 
{\rm span} \{  \zeta_n \circ \boldsymbol{\Phi}_{\mu}^{-1}  \}_{n=1}^N.
\end{equation} 
Figures \ref{fig:spacetimereg_sv3}(a)-(b)-(c) show the solution $ {U}_{\mu}$ and the approximation $\widehat{U}_{\mu}^{\rm opt}$ (black continuous line) for two values of $\mu$ and three time instants. 
We here report results for $N=3$.
We observe that we are able to obtain accurate reconstructions with an extremely low-dimensional representation.

%Shallow water equations;  space-time registration (3).
\begin{figure}[h!]
\centering
\subfloat[$t=0.4$] 
{  \includegraphics[width=0.32\textwidth]
 {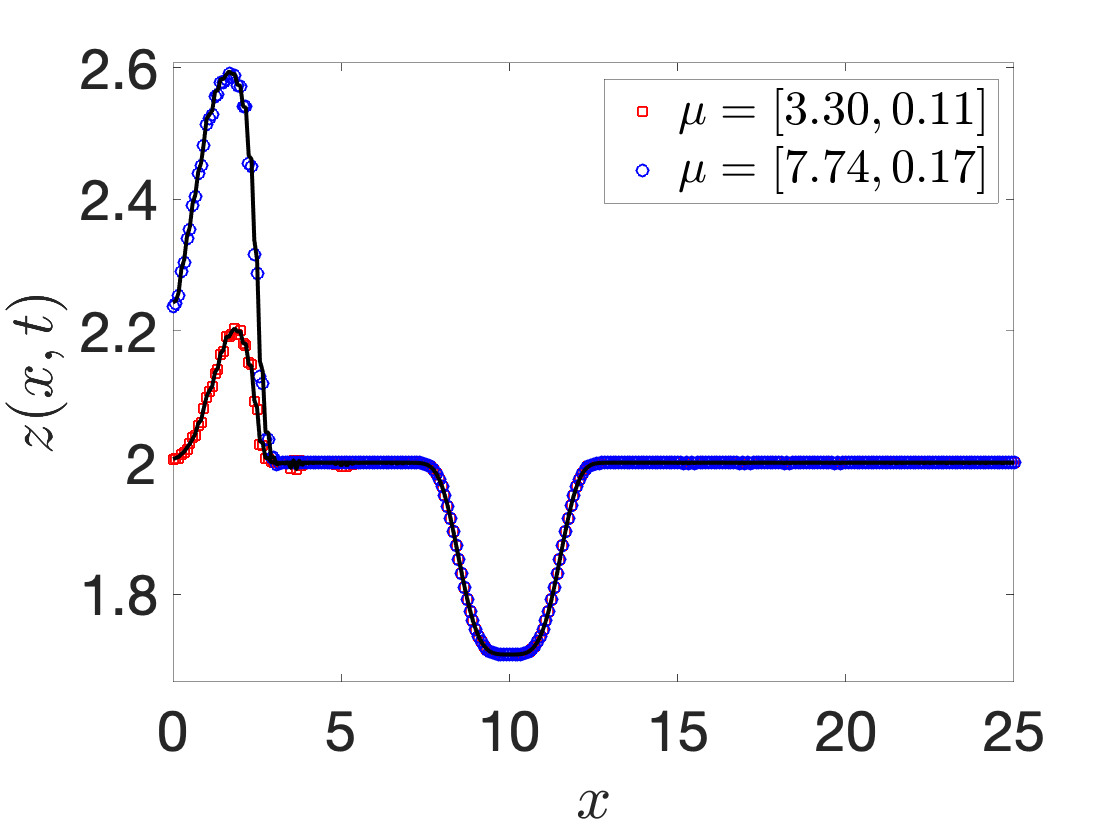}}
  ~~
 \subfloat[$t=1.5$] 
{  \includegraphics[width=0.32\textwidth]
 {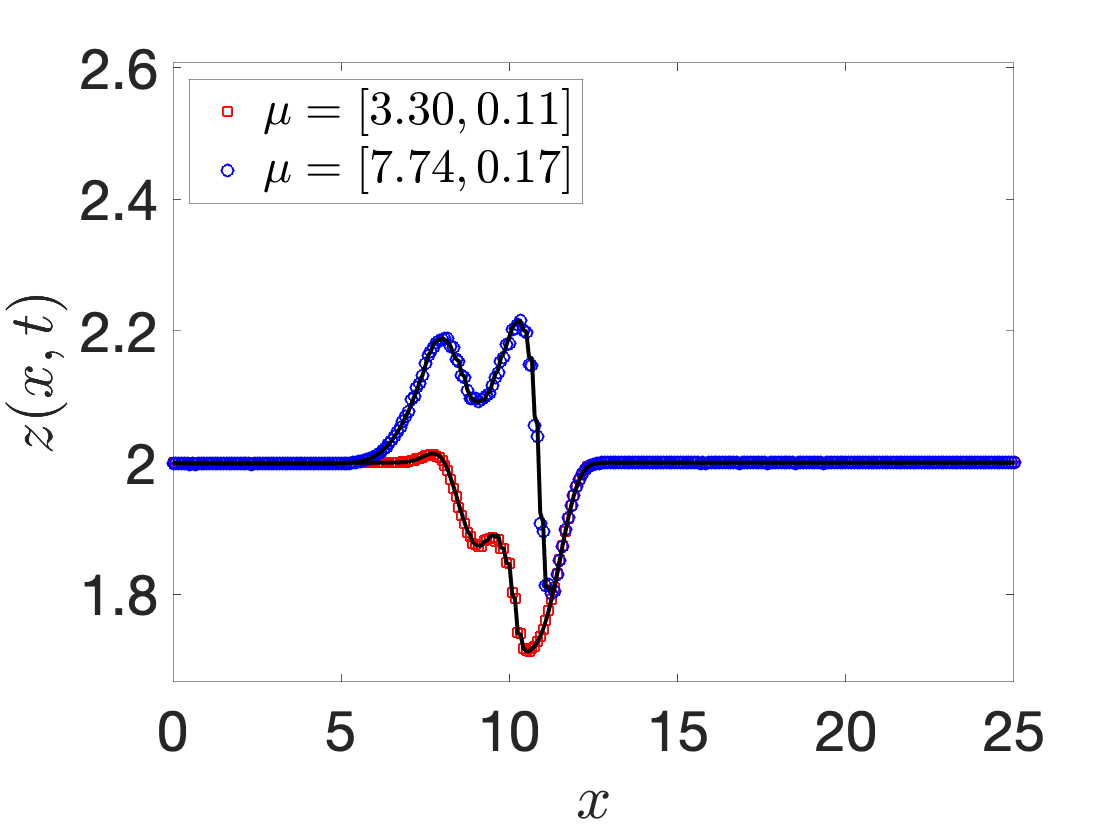}}
   ~~
 \subfloat[$t=3$] 
{  \includegraphics[width=0.32\textwidth]
 {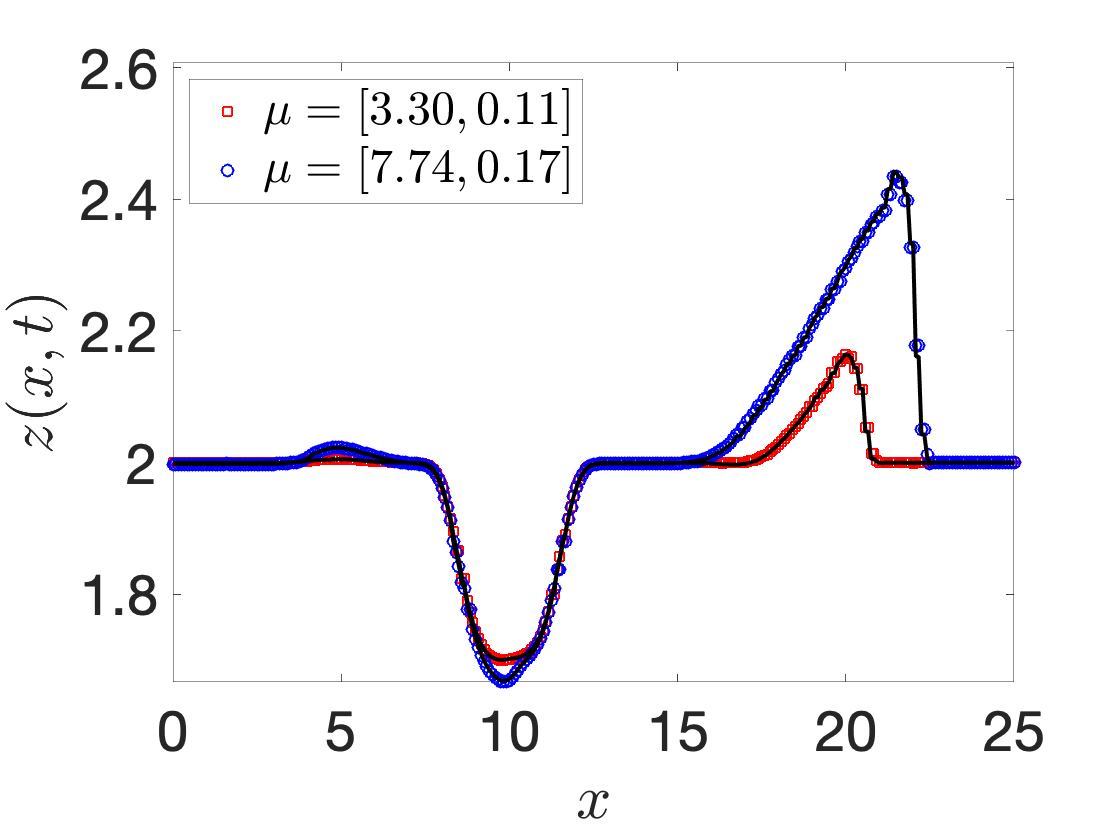}}
 
\caption{Shallow water equations;  space-time registration.
Behavior of the  free surface $z_{\mu}$ and of the optimal nonlinear reconstruction $\widehat{z}_{\mu}^{\rm opt} = ( \widehat{U}_{\mu}^{\rm opt} )_1 + b$ (black line)
(see \eqref{eq:Uopt} and \eqref{eq:bathymetry_saint_venant})
 for two values of $\mu$ and three time instants, with $N=3$.
}
 \label{fig:spacetimereg_sv3}
  \end{figure}  

\bibliographystyle{abbrv}	
\bibliography{all_refs}

\end{document}